\newtheorem{THM}{Theorem}[section]
\newtheorem*{THM*}{Theorem~\ref{main}}
\newtheorem{LEM}[THM]{Lemma}
\newtheorem{FACT}[THM]{Fact}
\newtheorem{COR}[THM]{Corollary}
\newtheorem{PROP}[THM]{Proposition}
\newtheorem{CLAIM}{Claim}
\newtheorem{QUE}{Question}
\theoremstyle{remark}
\newcommand\Int{\mathrm{Int}}
\newcommand{\COMMENT}[1]{}
\tikzset{middledownarrow/.style={
        decoration={markings,
            mark= at position 0.4 with {\arrow{#1}} ,
        },
        postaction={decorate}
    }
}
\tikzset{middlearrow/.style={
        decoration={markings,
            mark= at position 0.6 with {\arrow{#1}} ,
        },
        postaction={decorate}
    }
}
\tikzset{middleuparrow/.style={
        decoration={markings,
            mark= at position 0.75 with {\arrow{#1}} ,
        },
        postaction={decorate}
    }
}
\tikzset{middleupuparrow/.style={
        decoration={markings,
            mark= at position 0.9 with {\arrow{#1}} ,
        },
        postaction={decorate}
    }
}
\tikzset{middleupupuparrow/.style={
        decoration={markings,
            mark= at position 0.95 with {\arrow{#1}} ,
        },
        postaction={decorate}
    }
}
\tikzset{middleupupupuparrow/.style={
        decoration={markings,
            mark= at position 1 with {\arrow{#1}} ,
        },
        postaction={decorate}
    }
}
\theoremstyle{definition}
\numberwithin{equation}{section}
\title{On $1$-factors with prescribed lengths in tournaments}
\author{Dong Yeap Kang}
\address[Dong Yeap Kang]{
Department of Mathematical Sciences, KAIST, Daejeon 34141, South Korea, and Discrete Mathematics Group, Institute for Basic Science (IBS), Daejeon 34126, South Korea}
\email[Dong Yeap Kang]{dyk90@kaist.ac.kr}
\author{Jaehoon Kim}
\address[Jaehoon Kim]{Mathematics Institute, University of Warwick, Coventry, United Kingdom CV4 7AL}
\email[Jaehoon Kim]{Jaehoon.Kim.1@warwick.ac.uk, mutualteon@gmail.com}
\thanks{The first author has been supported by the National Research Foundation of Korea (NRF) grant funded 
by the Korea government (MSIT) (No. NRF-2017R1A2B4005020), by IBS-R029-C1, and also by TJ Park Science Fellowship of POSCO 
TJ Park Foundation.
The second author was partially supported by the European Research Council under the European Union's Seventh Framework Programme (FP/2007--2013) / ERC Grant Agreements no. 306349 and also partially supported by the Leverhulme Trust Early Career
Fellowship ECF-2018-538 (J. Kim).
}
\date{\today}
\begin{document}

\begin{abstract}
We prove that every strongly $10^{50}t$-connected tournament contains all possible $1$-factors with at most $t$ components and this is best possible up to constant.
 In addition, we can ensure that each cycle in the $1$-factor contains a prescribed vertex. This answers a question by K\"uhn, Osthus, and Townsend.

Indeed, we prove more results on partitioning tournaments. We prove that a strongly $\Omega(k^4tq)$-connected tournament admits a vertex partition into $t$ strongly $k$-connected tournaments with prescribed sizes such that each tournament contains $q$ prescribed vertices, provided that the prescribed sizes are $\Omega(n)$. This result improves the earlier result of K\"uhn, Osthus, and Townsend. We also prove that for a strongly $\Omega(t)$-connected $n$-vertex tournament $T$ and given $2t$ distinct vertices $x_1,\dots, x_{t}, y_{1},\dots, y_{t}$ of $T$, we can find $t$ vertex disjoint paths $P_1,\dots, P_t$ such that each path $P_i$ connecting $x_i$ and $y_i$ has the prescribed length, provided that the prescribed lengths are $\Omega(n)$. For both results, the condition of  connectivity being linear in $t$ is best possible, and the condition of prescribed sizes being $\Omega(n)$ is also best possible.
\end{abstract}

\maketitle

\section{Introduction}\label{sec:intro}

\subsection{Disjoint cycles in tournaments with prescribed lengths and vertices}
A \emph{$1$-factor} in a digraph $D$ is a spanning subgraph that is a union of vertex-disjoint cycles of length at least three in $D$. 

In 1959, Camion~\cite{camion1959} proved that a tournament admits a Hamiltonian cycle (a cycle containing all vertices) if and only if it is strongly 
connected. As a generalization of Camion's theorem, Bollob\'as (see~\cite{reid1989}) posed the following question: for each $t\in \mathbb{N}$, what is the least integer $g(t)$ such that all strongly $g(t)$-connected tournaments, up to finitely many exceptions, contain a  $1$-factor with exactly $t$ components?  Clearly $g(1)=1$ by Camion's theorem, and it is easy to see that $g(t)$ exists and $g(t) \geq t$ for each $t \in \mathbb{N}$.
Reid~\cite{reid1985} proved $g(2)=2$ by showing that every strongly 2-connected $n$-vertex tournament $T$ with $n \geq 6$ contains two vertex-disjoint cycles $C_1$ and $C_2$ with $|V(C_1)|=3$ and $|V(C_2)|=n-3$, if $T$ is not isomorphic to 
the $7$-vertex tournament with no transitive $4$-vertex subtournament. Finally, Chen, Gould, and Li~\cite{chen2001} resolved the question of Bollob\'as by proving that every strongly $t$-connected $n$-vertex tournament with $n \geq 8t$ contains $t$ vertex-disjoint cycles covering all vertices of the tournament, implying $g(t) = t$. 

As these results only guarantee the existence of a $1$-factor with $t$ components in highly connected tournaments, it is natural to ask when tournaments have all possible $1$-factors with $t$ components (i.e. contain pairwise vertex-disjoint cycles of lengths $\ell_1,\dots, \ell_t$ for all tuples $(\ell_1,\dots, \ell_t)$ of natural numbers with $\sum_{i=1}^{t}\ell_i = n$ and $\ell_i\geq 3$).
Song~\cite{song1993} extended the result of Reid by proving that every strongly $2$-connected $n$-vertex tournament with $n\geq 6$ contains all possible $1$-factors with two components as long as $T$ is not isomorphic to the $7$-vertex tournament with no transitive $4$-vertex subtournament, and posed the following question analogous to the question of Bollob\'as: for each $t\in \mathbb{N}$, what is the least integer $f(t)$ such that all strongly $f(t)$-connected tournaments, up to finitely many exceptions, contain all possible $1$-factors with exactly $t$ components? Clearly, the result of Song~\cite{song1993} shows $f(2)=2$, and it was conjectured that $f(t)=g(t)$ for all $t \in \mathbb{N}$.

Recently, K\"uhn, Osthus, and Townsend~\cite{kuhn2016cycle} extended the result of Song by proving that every strongly $10^{10} t^4 \log t$-connected $n$-vertex tournament contains all possible $1$-factors with at most $t$ components. More precisely, they showed that, for any integers $\ell_1,\dots, \ell_t \geq 3$ with $\sum_{i\in [t]} \ell_i=n$, the tournament $T$ contains $t$ vertex-disjoint cycles $C_1 , \dots , C_t$ such that $|V(C_i)| = \ell_i$ for each $i\in [t]$. They asked 
whether the connectivity $10^{10} t^4 \log t$ could be reduced to $O(t)$. Later in \cite[Problem 5.2]{pokrovskiy2016edge}, Pokrovskiy asked the same question again. In Theorem~\ref{thm:cycle}, we answer their question in the affirmative.

On the other hand, Moon~\cite{moon1966} proved another generalization of Camion's theorem stating that every strongly connected tournmanet $T$ is \emph{vertex-pancyclic}, meaning that for every vertex $v\in V(T)$ and any integer $3\leq \ell\leq |V(T)|$, there exists a cycle of length $\ell$ containing $v$ in $T$. Bang-Jensen, Guo and Yeo~\cite{jensen2000} proved that for any strongly $3$-connected tournament $T$ with at least $8$ vertices and two distinct vertices $v_1,v_2\in V(T)$, the tournament $T$ can be partitioned into two vertex-disjoint cycles $C_1$ containing $v_1$ and $C_2$ containing $v_2$.
In Theorem~\ref{thm:cycle} we proved that one can guarantee a much stronger pancyclicity with $t$ cycles if the connectivity of the tournament is linear in $t$. 

\begin{THM}\label{thm:cycle}
Let $n,t, \ell_1,\dots, \ell_t \in \mathbb{N}$ with $\ell_1,\dots, \ell_t\geq 3$ and $\sum_{i\in [t]} \ell_i = n$. For any strongly $10^{50} t$-connected $n$-vertex tournament $T$ and $t$ distinct vertices $x_1 , \dots , x_t \in V(T)$, the tournament $T$ contains vertex-disjoint cycles $C_1 , \dots , C_t$ such that $x_i \in V(C_i)$ and $|V(C_i)| = \ell_i$ for each $i\in [t]$.
\end{THM}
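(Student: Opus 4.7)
The plan is to combine the partition theorem stated in the introduction of this paper (any strongly $\Omega(k^{4}tq)$-connected tournament admits a vertex partition into $t$ strongly $k$-connected subtournaments of prescribed $\Omega(n)$ sizes, each containing $q$ prescribed vertices) with Camion's theorem, splitting the target cycles by size. Call an index $i$ \emph{large} if $\ell_i \geq \gamma n$ for a small absolute constant $\gamma>0$ chosen so that $1/\gamma$ is comfortably dwarfed by $10^{50}$, and \emph{small} otherwise; the set $L$ of large indices satisfies $|L|\leq 1/\gamma$, a bounded constant, while the set $S$ of small indices can be arbitrary. This dichotomy is what allows us to use the partition theorem (which requires $\Omega(n)$ part sizes) for the large cycles while giving the small cycles a separate treatment.

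First, I would invoke the partition theorem to split $V(T)$ into $|L|+1$ strongly $\Omega(t)$-connected subtournaments: a part of exact size $\ell_i$ containing $x_i$ for each $i\in L$, together with a \emph{reservoir} part $V^{\star}$ of size $\sum_{i\in S}\ell_{i}$ containing every $x_i$ with $i\in S$.  Since the number of parts is bounded, the prescribed sizes are $\Omega(n)$ (the degenerate case $|V^{\star}|=o(n)$ forces $|S|$ itself to be small and is dealt with by a direct iteration), and the total number of prescribed vertices is $q\leq t$, the hypothesis of $10^{50}t$-connectivity comfortably meets the $\Omega(k^{4}tq)$-requirement. For each $i\in L$, Camion's theorem yields a Hamiltonian cycle in the corresponding part, which is a cycle of length $\ell_i$ through $x_i$.

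The remaining task is to extract $|S|$ vertex-disjoint cycles of prescribed lengths $\ell_i\leq \gamma n$ ($i\in S$), each containing its prescribed $x_i$, inside the strongly $\Omega(t)$-connected reservoir $V^{\star}$. The plan is to proceed by nested splitting: apply the partition theorem inside $V^{\star}$ to divide the indices in $S$ into two balanced halves whose total prescribed lengths match a bipartition of $V(V^{\star})$, and recurse on each half. Each level of this recursion halves the number of indices, giving $O(\log t)$ levels, and the resulting connectivity loss is absorbed into the constant $10^{50}$. Once a sub-reservoir contains only a bounded number of small indices, the partition theorem applied once more delivers strongly connected parts of exactly the prescribed sizes (each being $\Omega$ of the current sub-reservoir size because all remaining $\ell_i$ are controlled from below relative to the sub-reservoir), and Camion's theorem again produces the required cycles.

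The main obstacle is the small-cycle recursion. The linear-in-$t$ connectivity target means that the partition theorem must be applied to sub-reservoirs in a way that never loses more than a constant-factor connectivity per level, and that the $\Omega(\cdot)$-of-current-size hypothesis on prescribed part sizes has to be preserved throughout. To reconcile these demands, the grouping of small indices at each recursive level must keep the total prescribed length in each half within a constant factor of the other; this is possible because each $\ell_i$ in $S$ is at most $\gamma n$, so any prefix sum of $\ell_i$'s can be matched to within $\gamma n$ of any prescribed target total. Combined with Camion's theorem at the innermost level and the direct treatment of the large-index parts, this construction yields the required vertex-disjoint cycles $C_1,\dots,C_t$ with $x_i \in V(C_i)$ and $|V(C_i)|=\ell_i$ for every $i$.
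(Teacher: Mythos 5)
There is a genuine gap: the small-index recursion does not come close to preserving connectivity linear in $t$. Theorem~\ref{thm:balanced} requires connectivity of order $qk^{4}t'$ (in fact $10^{8}qk^{2}\ell(k+\ell)^{2}t'm^{2}\log m$) to produce $t'$ parts that are strongly $k$-connected, each containing $q$ prescribed vertices. In your nested splitting, the parts produced at level $j$ must themselves be strongly $k_{j}$-connected with $k_{j}$ large enough to apply the partition theorem at level $j+1$; since each half at level $j$ still carries about $t/2^{j}$ prescribed vertices $x_i$, this forces a recursion of the shape $k_{j}\geq C\,(t/2^{j})\,k_{j+1}^{4}$. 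Over $\Theta(\log t)$ levels this compounds to a bound on the initial connectivity that is at least doubly exponential in $\log t$ (and super-polynomial in $t$ even under the conjectured optimal $O(ktq)$ dependence), so the loss is emphatically not ``absorbed into the constant $10^{50}$.'' A second, independent failure occurs at the innermost level: when all $\ell_i=3$ (so $t=n/3$ and every index is small), the recursion must terminate with strongly connected parts of size $3$, but Theorem~\ref{thm:balanced} only delivers parts of size $\Omega(n'/(t'm))$ relative to the current sub-tournament, and a sub-reservoir whose total size is a bounded multiple of $3$ cannot have retained any meaningful connectivity. Your remark that $|V^{\star}|=o(n)$ forces $|S|$ to be small is also false ($|S|$ can be as large as $|V^{\star}|/3$, hence close to $t$).

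The paper avoids recursion entirely. It applies Lemma~\ref{lem:main} \emph{once} to produce $t$ seed sets $W_1,\dots,W_t$ simultaneously (each strongly $10^{9}$-connected, containing $x_i$, of size $O(n/t)$), distributes the leftover vertices by a Hall-type matching in an auxiliary bipartite graph, and then handles the short prescribed lengths not by shrinking parts but by \emph{overshooting}: each short index $j$ receives a part of size $\ell_j+\lceil n/(5t)\rceil$, Corollary~\ref{cor:twocycles} splits it into a cycle of length exactly $\ell_j$ through $x_j$ plus a leftover cycle, and the leftover cycles are absorbed into the parts assigned to long indices before applying Camion's theorem there. If you want to salvage your outline, you would need some such one-shot mechanism in place of the halving recursion.
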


The connectivity bound is sharp up to a multiplicative constant, and we do not attempt to optimize the constant $10^{50}$. 
In particular, Theorem~\ref{thm:cycle} implies that $t \leq f(t) \leq 10^{50}t$. It would be interesting if one can prove $f(t) = t$, answering the conjecture of Song~\cite{song1993}.  Note that one can easily extend Theorem~\ref{thm:cycle} to strongly $10^{51}t$-connected semicomplete digraphs, as every strongly $(3k-2)$-connected semicomplete digraph contains a strongly $k$-connected spanning tournament~\cite{guo1997}. 

There are some related results in different settings. Amar and Raspaud~\cite{amar1991} proved that every strongly connected $n$-vertex digraph with at least $(n-1)(n-2)+3$ edges contains all possible $1$-factors except in two cases. 
Keevash and Sudakov \cite{keevash2009cycle} proved that every $n$-vertex oriented graph with minimum semidegree close to $n/2$ admits $t$ vertex-disjoint cycles with prescribed lengths covering almost all vertices. For undirected graphs, the El-Zahar conjecture determines the minimum degree condition guaranteeing a partion of an $n$-vertex graph into vertex-disjoint cycles of prescribed lengths, and was proved for all large $n$ by Abbasi~\cite{Abbasi}. For more on topics and results related to $1$-factors in digraphs, the readers are referred to~\cite[Chapter 13]{bang2008digraphs}.

We end this subsection by noting that Theorem~\ref{thm:cycle} is best possible in the following sense. 
\begin{itemize}
\item
A partition of a highly connected tournament $T$ into strongly $k$-connected subgraphs of prescribed sizes for $k\geq 2$ may not exist (observe that Theorem~\ref{thm:cycle} is the case of $k=1$).

\item A partition of a highly connected tournament $T$ into cycles of prescribed lengths containing at least two prescribed vertices may not exist.
\end{itemize}
Indeed, the following proposition presents a highly connected tournament $T$ with diameter at least $\Omega(n)$ such that every strongly $k$-connected subgraph of $T$ with $k\geq 2$ contains at least $\Omega(n)$ vertices. Note that the diameter of $T$ implies that there are two vertices $x,y$ with distance $\Omega(n)$, thus any cycle containing both $x$ and $y$ must have length at least $\Omega(n)$.

\begin{PROP}\label{prop:example}
For $k, s, n \in \mathbb{N}$ with $2\leq k\leq s$ and $2\binom{s+1}{2}+2s+2\leq n$, there exists an $n$-vertex strongly $s$-connected tournament $T$ of diameter at least $\binom{s+1}{2}^{-1} (n - 2s)$ such that every strongly $k$-connected subtournament $T'$ of $T$ satisfies $|V(T')|\geq k\binom{s+1}{2}^{-1}n-k-2$.
\end{PROP}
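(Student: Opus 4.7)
The plan is to construct an explicit $n$-vertex tournament $T$ and verify each of the three required properties. Set $m := \binom{s+1}{2}$ and $r := \lfloor (n-2s)/m \rfloor$; the hypothesis $n \geq 2m + 2s + 2$ ensures $r \geq 2$. Partition $V(T) = X \cup B_1 \cup \cdots \cup B_r \cup Y$ with $|X| = |Y| = s$, $|B_i| = m$ for $i < r$, and $|B_r| \in [m, 2m)$ to absorb the leftover vertices. Within each component, take a strongly $s$-connected tournament (for instance, a rotational tournament when the component has at least $2s+1$ vertices, with an ad hoc choice for the $s$-vertex endpoints $X, Y$). Orient cross-component edges so that \emph{consecutive} components in the chain order $X, B_1, \ldots, B_r, Y$ are joined by complete bipartite forward edges, while \emph{non-consecutive} components are joined by complete bipartite backward edges.

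The diameter bound is immediate: every forward cross-component out-edge from a component points only to its chain-successor, so any directed path from $x \in X$ to $y \in Y$ visits $B_1, \ldots, B_r$ in order and has length at least $r + 1 \geq (n - 2s)/m$. Strong $s$-connectivity follows from Menger's theorem by exhibiting $s$ internally vertex-disjoint $u$-$v$ paths for each pair $u, v$: we use the $m \geq s$ lanes in the forward chain, closing cycles via the complete bipartite backward edges $Y \to X$ and $B_j \to B_i$ (for $j > i+1$), with a case split on the components containing $u$ and $v$.

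The main obstacle is the sub-tournament bound. Given a strongly $k$-connected $T' \subseteq T$ with vertex set $S$, set $S_i := S \cap B_i$. The strategy is to prove (i) $S$ meets at least $r - 1$ of the blocks $B_i$, and (ii) $|S_i| \geq k$ for each such block. Step (ii) is a layer-cut argument: for any block $B_i$ strictly between the first and last blocks of $S$, removing $S_i$ disconnects $T[S]$ in the forward direction (since in $T$ every forward cross-component edge leaves a component only to its chain-successor), so $|S_i| \geq k$ by strong $k$-connectivity of $T'$. The delicate step is (i), which requires ruling out strongly $k$-connected sub-tournaments supported on a short sub-chain like $T[B_i \cup B_{i+1} \cup B_{i+2}]$; the naive complete-bipartite backward orientation renders this sub-tournament strongly $k$-connected via the block triangle $B_i \to B_{i+1} \to B_{i+2} \to B_i$. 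Overcoming this requires refining the construction---most plausibly by restricting the non-adjacent backward edges $B_j \to B_i$ between middle blocks to only $k-1$ internally vertex-disjoint channels per pair while preserving global $s$-connectivity through the $Y \to X$ backward edges of bandwidth $s$---so that any sub-chain of fewer than $r - 1$ blocks has a vertex cut of size at most $k-1$ in its induced sub-tournament. Granting (i) and (ii), summing $k$ over the $\geq r - 1$ spanned blocks yields $|S| \geq k(r - 1) \geq kn/m - k - 2$, completing the proof.
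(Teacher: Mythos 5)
There is a genuine gap --- in fact two. First, your construction makes each block $B_i$ an internally strongly $s$-connected tournament on $\binom{s+1}{2}$ vertices. But then $T[B_i]$ is itself a strongly $k$-connected subtournament of constant size (independent of $n$), which immediately violates the bound $|V(T')|\geq k\binom{s+1}{2}^{-1}n-k-2=\Omega(kn/s^2)$ that you are trying to prove. Any valid construction must have \emph{no} small strongly connected pieces anywhere; in the paper each layer induces a transitive tournament, and indeed the whole tournament is transitive except for a sparse set of prescribed backward edges. Second, the issue you do flag --- that complete bipartite backward edges between non-consecutive blocks make $T[B_i\cup B_{i+1}\cup B_{i+2}]$ strongly connected --- is left unresolved. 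The proposed repair (thinning each non-adjacent pair to $k-1$ ``channels'') is not carried out, and it is not routine: a subchain of $p$ blocks contains $\Theta(p^2)$ non-adjacent pairs, so the union of all their channels is not a cut of size $k-1$, and one must simultaneously re-verify global $s$-connectivity and check that the channels themselves do not assemble into small highly connected subtournaments. As written, steps (i) and (ii) of your subtournament argument therefore do not go through.

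For comparison, the paper's construction takes a single linear order $x_s<\dots<x_1<w^*<(\text{layers }Z^1,\dots,Z^m)<w_1<\dots<y_s$, orients everything forward except for exactly $\binom{s+1}{2}$ pairwise vertex-disjoint ``return paths,'' each of which steps back one layer at a time from $Y$ to $X$. Two structural facts then do all the work: (a) since no backward edge skips a layer, every path from $Y$ to $X$ meets every layer $Z^i$, so it has at least $m$ internal vertices (this gives the diameter bound); and (b) every vertex outside $X$ has at most one in-neighbor above it in the order, so the minimum vertex of any strongly $k$-connected subtournament $T'$ (with $k\geq 2$) must lie in $X$, and symmetrically its maximum lies in $Y$. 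Menger's theorem then yields $k$ internally disjoint $Y$-to-$X$ paths inside $T'$, each contributing $m$ internal vertices, giving $|V(T')|\geq km+2$. Your outline is missing an analogue of fact (b), which is the mechanism that forces any highly connected subtournament to span the whole chain.
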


 However, if we further assume that all the prescribed sizes are $\Omega(n)$, then both generalizations of Theorem~\ref{thm:cycle} become true. Theorem~\ref{thm:balanced} shows that such a partition exists, provided that all prescribed sizes are $\Omega(n)$.

\subsection{Partitioning tournaments into highly connected subtournaments with prescribed sizes}
Thomassen~\cite{reid1989} asked whether for integers $k_1 , \dots , k_t$, there exists $f(k_1 , \dots, k_t)$ such that every strongly $f(k_1 , \dots , k_t)$-connected tournament admits a vertex-partition $
W_1 , \dots , W_t$ such that for each $i\in [t]$, the set $W_i$ induces a strongly $k_i$-connected subtournament. 

The following two concepts were introduced to tackle the above problem.
A tournament $T$ is \emph{critically strongly $k$-connected} if $T$ is strongly $k$-connected but it is no longer strongly $k$-connected after deleting any vertex. A tournament is \emph{minimally strongly $k$-connected tournament} if it is strongly $k$-connected but any proper subtournament is not strongly $k$-connected. 

By Moon's theorem, the cycle of length three is the only critically (or minimally) strongly $1$-connected tournament. However, Thomassen (see~\cite[Theorem 2.14.11]{bang2018classes}) showed that there are infinitely many critically strongly $k$-connected tournaments for every integer $k \geq 2$. 
On the other hand, Lichiardopol~\cite{lichiardopol2012} (see also~\cite[Conjecture 2.14.12]{bang2018classes}) conjectured that the situation is different for minimally strongly $k$-connected tournaments: for every integer $k \geq 1$, there are only finitely many minimally strongly $k$-connected tournaments, which yields the positive answer to the Thomassen's question if it is true. However, the following corollary disproves Lichiardopol's conjecture.
One can easily prove this corollary by considering a smallest strongly $k$-connected subtournament of large strongly $k$-connected tournament $T$ in Proposition~\ref{prop:example}. 

\begin{COR}\label{cor:disproof}
For every integer $k \geq 2$, there are infinitely many minimally strongly $k$-connected tournaments.
\end{COR}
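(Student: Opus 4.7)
The plan is to apply Proposition~\ref{prop:example} for larger and larger $n$, and to extract inside each resulting tournament a strongly $k$-connected subtournament of minimum order, which is then automatically minimally strongly $k$-connected.

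First I fix $k\geq 2$ and set $s:=k$, so that the hypothesis $2\leq k\leq s$ of Proposition~\ref{prop:example} is satisfied. For each integer $n\geq 2\binom{k+1}{2}+2k+2$, Proposition~\ref{prop:example} then supplies a strongly $k$-connected $n$-vertex tournament $T=T_n$ such that every strongly $k$-connected subtournament $T'$ of $T_n$ satisfies
\[
|V(T')|\;\geq\; k\binom{k+1}{2}^{-1}n-k-2 \;=\; \frac{2n}{k+1}-k-2.
\]

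Next, because $T_n$ is itself strongly $k$-connected, I may choose a strongly $k$-connected subtournament $T_n^{\star}$ of $T_n$ with the smallest possible number of vertices. By the minimality of $|V(T_n^\star)|$, no proper subtournament of $T_n^{\star}$ can be strongly $k$-connected, so $T_n^{\star}$ is minimally strongly $k$-connected by definition.

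Finally, the displayed inequality forces $|V(T_n^{\star})|\to\infty$ as $n\to\infty$, so the family $\{T_n^{\star}\}_n$ contains minimally strongly $k$-connected tournaments of arbitrarily large orders, and in particular infinitely many pairwise non-isomorphic ones. Once Proposition~\ref{prop:example} is granted, there is essentially no remaining obstacle; the only point to note is the trivial observation that a smallest strongly $k$-connected subtournament is automatically a minimal one.
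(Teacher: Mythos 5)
Your proof is correct and is exactly the argument the paper intends: the authors only sketch it in one line ("consider a smallest strongly $k$-connected subtournament of the large tournament $T$ from Proposition~\ref{prop:example}"), and your write-up fills in precisely those details, including the choice $s=k$ and the observation that the lower bound $k\binom{k+1}{2}^{-1}n-k-2$ forces the minimal examples to have unbounded order.
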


Nonetheless, Thomassen's problem turned out to be true: K\"uhn, Osthus, and Townsend~\cite{kuhn2016cycle} recently answered this question in the affirmative, and proved that every strongly $10^7 k^6 t^3 \log (kt^2)$-connected 
tournament can be partitioned into $t$ strongly $k$-connected subtournaments. Moreover, they also proved that the sizes of $W_i$ can be prescribed  as long as the prescribed sizes are not too small. We improve their result in the following theorem. 
Here, $\delta(D)$ denotes $\min_{v\in V(D)} \big\{|N^+_D(v)\cup N^-_D(v)|\big\}$.

\begin{THM}\label{thm:balanced}
Let $k, t, \ell, m,n,q, a_1,\dots, a_t \in \mathbb{N}$ with $t,m\geq 2$, $\sum_{i\in [t]} a_i \leq n$ and $a_i\geq n/(10tm)$ for each $i\in [t]$.
Suppose that $D$ is a strongly $10^{8} qk^2 \ell (k+\ell)^2  tm^2 \log(m) $-connected $n$-vertex digraph with $\delta(D)\geq n-\ell$, and $Q_1 , \dots , Q_t \subseteq V(D)$ are $t$ disjoint sets with $|Q_i| \leq q$ for each $i \in [t]$.
Then there exists a partition $W_1 , \dots , W_t$ of $V(D)$ satisfying the following.
\begin{itemize}[noitemsep]
\item[$(\rm 1)$] $Q_i \subseteq W_i$.
\item[$(\rm 2)$] For every $i\in [t]$, the subgraph $D[W_i]$ is strongly $k$-connected.
\item[$(\rm 3)$] $|W_i| = a_i$.
\end{itemize}
\end{THM}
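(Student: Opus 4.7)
The plan is to prove Theorem~\ref{thm:balanced} by \emph{peeling} off one target set at a time; the linear-in-$t$ factor in the required connectivity is the signature of this approach, since each peel will consume a bounded amount of connectivity that is independent of $t$. Concretely, I would establish the following one-step lemma: if $D'$ is a strongly $C$-connected digraph on $n' \geq n/2$ vertices with $\delta(D') \geq n' - \ell$, $Q \subseteq V(D')$ has $|Q| \leq q$, and $a$ is an integer with $n/(10tm) \leq a \leq n'$, then there exists $W \subseteq V(D')$ with $Q \subseteq W$, $|W| = a$, $D'[W]$ strongly $k$-connected, and $D' - W$ strongly $(C - c)$-connected, where $c = O(qk^2 \ell (k+\ell)^2 m^2 \log m)$. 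Applying this lemma $t$ times in sequence — choosing $W_i$ inside $D - (W_1 \cup \cdots \cup W_{i-1})$ with prescribed vertex set $Q_i$ and prescribed size $a_i$ — yields the desired partition, since after $t$ peels the residual connectivity budget $C - tc$ remains positive by the hypothesis on $C$.

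For the one-step lemma I would proceed in three stages. \textbf{(1) Random split:} include each vertex of $V(D') \setminus Q$ in the candidate set $W_0$ independently with probability $a/|V(D')|$. Chernoff's inequality gives $|W_0|$ close to $a$, and the near-regularity $\delta(D') \geq n' - \ell$ together with high $C$-connectivity implies, with positive probability, that both $D'[W_0]$ and $D' - W_0$ enjoy robust expansion: every small vertex set has many neighbors on both sides. \textbf{(2) Connectivity boosting:} to lift the random split from approximate to genuine strong $k$-connectivity on each side, use linkage results for highly connected digraphs (finding $k$ internally disjoint short paths between prescribed pairs) to reroute small numbers of vertices between the two sides so as to close up any $(<k)$-separator. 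I expect an iterative argument reducing the connectivity deficit by a constant factor per round, yielding the $\log m$ contribution to $c$ since the initial deficit after random sampling is of order $n/m$. \textbf{(3) Size correction:} after boosting, perform $O(q + \ell)$ controlled swaps between the two sides to force $|W| = a$ exactly, using the remaining connectivity slack to certify that each swap preserves strong $k$-connectivity of both halves.

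The principal obstacle will be the Boosting stage, where the two halves must become $k$-connected \emph{simultaneously}: modifications healing one half's connectivity threaten to damage the other. To decouple the two repair processes, my plan is to earmark during the Random Split two disjoint \emph{reservoirs} of flexible vertices, one per side, and to restrict all boosting swaps to these reservoirs. The reservoirs must be large enough to absorb all the fixups — this is what drives the $m^2$ budget, since the smallest target $a_i \geq n/(10tm)$ must still accommodate its share of the reservoir — yet small enough that removing them does not disrupt the expansion established in Stage~(1). Verifying these parameters and checking that standard linkage tools suffice for the simultaneous repair is where most of the technical content will reside; the factor $k^2 \ell (k+\ell)^2$ then reflects the familiar cost of routing $k$ internally disjoint paths in a near-regular digraph with $\ell$ missing edges per vertex.
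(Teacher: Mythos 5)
Your peeling strategy diverges fundamentally from the paper's proof, and it contains a gap that I do not see how to repair in the form you state it. The one-step lemma asserts that one can extract $W$ with $|W|=a$ so that $D'-W$ is strongly $(C-c)$-connected with $c=O(qk^2\ell(k+\ell)^2m^2\log m)$ \emph{independent of $a$}, and you then budget the total loss additively as $tc$. But the sets you remove are not small: $a_i$ is only bounded below by $n/(10tm)$, and a single $a_i$ may be as large as $n-(t-1)\lceil n/(10tm)\rceil$, i.e.\ a $(1-\frac{1}{10m})$-fraction of the digraph. Removing a $p$-fraction of the vertices of an exactly $C$-connected digraph degrades connectivity multiplicatively in general: if $S$ is a minimum separator with $|S|=C$ separating two large sets, any $W$ of density $p$ produced by your random split meets $S$ in about $pC$ vertices, so $D'-W$ inherits a separator of size about $(1-p)C$; and trivially $D'-W$ has only $n'-a$ vertices, so it cannot be $(C-c)$-connected at all once $n'-a<C-c+1$, which happens for legitimate choices of the $a_i$ whenever $n$ is comparable to $C$. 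So the invariant ``$C-tc>0$'' is the wrong bookkeeping; at best you could hope for multiplicative preservation $C\mapsto(1-p)C$, which telescopes over the $t$ peels to roughly $Ca_t/n$ --- numerically survivable here, but it forces you to prove a genuinely nontrivial lemma, namely that a (suitably corrected) random induced sub-digraph of density $1-p$ of a strongly $C$-connected digraph with $\delta\ge n-\ell$ is strongly $\Omega((1-p)C)$-connected. You assert this (``robust expansion with positive probability''), but it is not a standard tool, and large minimum semidegree does not imply connectivity in tournaments, so Chernoff plus near-regularity does not deliver it. Your hypothesis $n'\ge n/2$ also fails after the first peel whenever $a_1>n/2$. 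The remaining stages (the iterative ``boosting'' that must make both sides $k$-connected simultaneously, and the size-correcting swaps) are likewise asserted rather than argued, and each is comparable in difficulty to the machinery it replaces.

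For contrast, the paper neither peels nor randomizes. Lemma~\ref{lem:main} builds all $t$ cores $W_1,\dots,W_t$ \emph{simultaneously}: it constructs $O(k(k+\ell)tm)$ small almost-dominating sets via Lemma~\ref{lem:dom}, joins them by disjoint paths in a single application of the linkage theorem (Corollary~\ref{cor:digraph_link}) --- this one global application is where the connectivity linear in $t$ is spent --- and then manages the exceptional vertices so that each core is strongly $k$-connected, contains $Q_i$, has size at most $n/(50mt)$, and, crucially, almost every leftover vertex has $k$ in- and out-neighbours in almost every core (\ref{A3}, \ref{A4}). The exact sizes $|W_i|=a_i$ are then achieved not by local swaps but by a perfect matching (Fact~\ref{fact: perfect matching}) in an auxiliary bipartite graph assigning leftover vertices to cores. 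Any salvage of a sequential argument would still need analogues of \ref{A3}--\ref{A4} to attach the bulk of the vertices while hitting the sizes exactly, at which point you have essentially reconstructed the paper's proof.
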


Note that the condition $\delta(D) \geq n - \ell$ in the theorem ensures that the digraph $D$ is close to being semicomplete. Compared to the result in \cite{kuhn2016cycle}, we improve the connectivity bound to $O(t)$ that is best possible. Note that there are infinitely many strongly $(kt-1)$-connected tournaments without $t$ pairwise vertex-disjoint strongly $k$-connected subtournaments.

Also, we extend the theorem to digraphs which are close to semicomplete, rather than just for tournaments, and to allow each part to contain a small set of prescribed vertices, where the lower bound $\Omega(n)$ on the size $a_i$ is necessary by Proposition~\ref{prop:example}.

The dependence on $k,m,\ell$ in the connectivity bound is unlikely to be best possible. Indeed, K\"uhn, Osthus and Townsend~\cite{kuhn2016cycle} conjectured that connectivity $O(kt)$ suffices for tournaments when we do not consider the prescribed sizes.
For the clarity of statement and the argument, we have not attempted to improve the orders of $m$ and $\ell$ in the statements. In fact, it is very easy to make small improvements of the orders of $m$ and $\ell$.\footnote{
If we assume that $n$ is large, then the connectivity can be easily replaced with $10^{8} qk^2 \ell (k+\ell)^2  tm \log(m)$. Note that the only place we need the term $m^2\log(m)$ instead of $m\log(m)$ is \eqref{eq: W'i size}. However, as long as $n$ is large, we can still obtain \eqref{eq: W'i size} with the smaller connectivity.
Also, by sacrificing the order of $k$, one can easily improve the order of $\ell$.
Note that \ref{U*1} in Claim~\ref{cl: extension} can be easily replaced by $|U^*_i|\leq \min\{4k(k+\ell), (k+1)^2|U_i|\}$ assuming $|U_i| =O(\ell^2 k^2tm\log(km\ell))$. By altering some parameters and calculations in the proof, we can show that the connectivity $O(qk^5 \ell^2 m^2 t \log(km\ell))$ suffices.
}

We remark that Theorem~\ref{thm:balanced} proves a generalization of the following conjecture of Bang-Jensen, Guo and Yeo~\cite{jensen2000}, which states that there is a function $h(k_1,k_2)$ such that for all strongly $h(k_1,k_2)$-connected tournaments $T$ and $v_1,v_2 \in V(T)$, $T$ can be partitioned into vertex-disjoint tournaments $T_1$ containing $v_1$ and $T_2$ containing $v_2$ such that each $T_i$ is strongly $k_i$-connected. 
\footnote{Indeed, a few modification of the proof of the theorem by K\"uhn, Osthus and Townsend~\cite{kuhn2016cycle} already implies the existence of $h(k_1,k_2)$.} 

Now we end this subsection with some related open questions.  Hajnal~\cite{hajnal1983} and Thomassen~\cite{thomassen1983} proved an analogous theorem  for undirected graphs. They proved that there exists $f(k)$ such that any $f(k)$-connected graph can be partitioned into two $k$-connected subgraphs. Later, K\"uhn and Osthus~\cite{kuhn2003} further generalized this. It would be very interesting if one can prove Theorem~\ref{thm:balanced} with a connectivity independent of $\ell$. Indeed, K\"uhn, Osthus, and Townsend~\cite{kuhn2016cycle} asked the following question.
\begin{QUE}\cite{kuhn2016cycle}\label{que:gen_balanced}
For all $k,t \in \mathbb{N}$, does there exist $f(k,t)$ such that every strongly $f(k,t)$-connected digraph $D$ can be partitioned into $t$ vertex-disjoint strongly $k$-connected subdigraphs?
\end{QUE}

Stiebitz~\cite{stiebitz96} considered similar question for minimum degree instead of connectivity and proved that for integers $s,t \geq 0$ and an undirected graph $G$ with minimum degree at least $s+t+1$, there exist two disjoint sets $A,B \subseteq V(G)$ with $V(G)=A\cup B$ such that both $G[A]$ and $G[B]$ have minimum degree at least $s$ and $t$, respectively. He further asked the following question, whether the analogue of this result also holds for digraphs.

\begin{QUE}\cite{stiebitz95}\label{que:gen_degree}
For integers $s,t \geq 0$, does there exist $g(s,t)$ such that every digraph $D$ with minimum out-degree at least $g(s,t)$ can be partitioned into two vertex-disjoint subdigraphs $D_1$ and $D_2$ with minimum out-degree $s$ and $t$ respectively?
\end{QUE}

\subsection{Disjoint paths in tournaments with prescribed lengths connecting prescribed vertices}

For an integer $t \geq 1$, a digraph $D$ is \emph{$t$-linked} if $|V(D)| \geq 2t$ and for any $2t$ distinct\footnote{In fact, one may remove this condition by replacing some vertices with their neighbors.} vertices $x_1 , \dots , x_t , y_1 , \dots , y_t$ of $D$, there exist $t$ vertex-disjoint paths $P_1,\dots, P_t$ such that for each $i\in [t]$ the path $P_i$ starts at $x_i$ and ends at $y_i$.  Note that it is easy to see  that every $t$-linked digraph is strongly $(2t-1)$-connected, 
and it is natural to ask whether highly connected digraphs are highly linked. For undirected graphs, Bollob\'as and Thomason~\cite{bollobas1996highly} proved that every $22t$-connected 
graph is $t$-linked. However, Thomassen~\cite{thomassen1991} proved that for every integer $m \geq 1$, 
there exists a strongly $m$-connected digraph that is not $2$-linked, so high connectivity does not imply high linkedness for digraphs in general.

On the other hand, Thomassen~\cite{thomassen1984} showed that there exists a constant $C$ such that any strongly $2^{C t \log t}$-connected tournament is $t$-linked, and thus high connectivity implies high linkedness for tournaments. Recently, K\"uhn, Lapinskas, Osthus and Patel~\cite{kuhn2014proof} improved the connectivity bound to $O(t \log t)$, and finally  Pokrovskiy~\cite{pokrovskiy2015highly} gave a linear bound on the connectivity that any strongly $452t$-connected tournament is $t$-linked. Gir\~ao and Snyder \cite{GS18tournamentlinkage} proved that every strongly $4t$-connected tournament $T$ is $t$-linked if the minimum out-degree of $T$ is large. Pokrovskiy~\cite{pokrovskiy2016edge} also proved that there exists a constant $C$ such that every strongly $Ct$-connected tournament
is $t$-linked, so that the union of $t$ paths connecting given $t$ pairs of vertices covers all vertices. 

\begin{THM}[Pokrovskiy~\cite{pokrovskiy2016edge}]\label{thm:spanlink}
There exists $c>0$ such that for any integer $t \geq 1$, every strongly $ct$-connected $n$-vertex tournament $T$ with $2t$ distinct vertices $x_1 , \dots , x_t , y_1 , \dots , y_t \in V(T)$ admits $t$ vertex-disjoint paths $P_1 , \dots , P_t$ so that $P_i$ is a path from $x_i$ to $y_i$ for $1 \leq i \leq t$ and $\sum_{i=1}^{t} |V(P_i)| = n$.
\end{THM}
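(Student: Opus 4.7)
The plan is to split the argument into a \emph{linkage phase} and an \emph{absorption phase}. In the linkage phase, we invoke Pokrovskiy's earlier linear linkedness result (that every strongly $\Omega(t)$-connected tournament is $t$-linked) to find initial vertex-disjoint paths $P_i^{0}$ from $x_i$ to $y_i$. By selecting these paths to be essentially shortest, we may assume $\sum_i |V(P_i^0)| = O(t)$, so that $U := V(T) \setminus \bigcup_i V(P_i^0)$ has size $n - O(t)$. In the absorption phase, we extend each $P_i^0$ into a longer $x_i$--$y_i$ path $P_i$ by inserting vertices of $U$, keeping the endpoints $x_i,y_i$ fixed, so that the final $P_1,\dots,P_t$ are pairwise vertex-disjoint and collectively span $V(T)$.

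The principal tool for the absorption phase is the standard \emph{insertion lemma} for tournaments: if $P = v_1 v_2 \cdots v_k$ is a directed path and $u \notin V(P)$, then $u$ can be spliced into $P$ between $v_j$ and $v_{j+1}$ whenever $v_j \to u$ and $u \to v_{j+1}$. Because $T$ is a tournament, such an index $j$ exists unless $u$ is entirely dominated by or entirely dominates the interior of $P$; those degenerate cases can be handled by attaching $u$ near an endpoint using the high connectivity. To apply this in bulk, I would first cover $T[U]$ by a short list of directed paths $H_1, \ldots, H_r$ (a single Hamilton path per strong component of $T[U]$ suffices, since every strongly connected tournament has a Hamilton path), then assign each $H_j$ to some target $P_{i(j)}^0$, and finally splice $H_j$ into $P_{i(j)}^0$ sequentially. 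The $\Omega(t)$-connectivity is used both to reroute the two endpoints of each $H_j$ to convenient insertion slots inside $P_{i(j)}^0$ and to guarantee that the positional hypothesis of the insertion lemma is satisfied.

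The main obstacle is the global bookkeeping required when performing $\Omega(n)$ insertions while preserving vertex-disjointness across the different $P_i$'s. Individually each insertion is cheap, but batched insertions may interfere with each other, and a careless assignment of pieces of $U$ to paths may exhaust the flexibility of any single $P_i^0$. I would deal with this by first enlarging each $P_i^0$ into an \emph{absorber path} $P_i^1 \supseteq P_i^0$ that contains many internal vertices with both in-neighbors and out-neighbors in every large subset of $V(T)$; such absorbers exist by a pigeonhole/random-like argument inside a highly connected tournament. With the absorbers in place, the splicing of each $H_j$ into $P_{i(j)}^1$ becomes independent, and the remaining linear-in-$t$ connectivity is used as a reservoir to reroute whenever a conflict arises. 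The final constant $c$ is obtained by summing the connectivity consumed in the linkage phase, the absorber construction, and the routing of the $H_j$.
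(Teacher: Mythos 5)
This statement is quoted in the paper as a known result of Pokrovskiy; the paper gives no proof of it, so your sketch can only be judged on its own merits and against the machinery that Pokrovskiy (and Section~3 of this paper) actually use. Your two--phase architecture (link first, then absorb the leftover vertices by insertion into the paths) is the right general shape, but the sketch has several genuine gaps. First, the claim that the initial linkage can be chosen with $\sum_i|V(P_i^0)|=O(t)$ is false: a strongly $\Theta(t)$-connected tournament can have diameter $\Omega(n)$ (this is exactly Proposition~\ref{prop:example} of the paper), so even a single $x_i$--$y_i$ path may be forced to use $\Omega(n)$ vertices. This particular error is not fatal to the strategy, but it signals the underlying difficulty. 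Second, the proposed ``absorber path'' --- internal vertices having in- and out-neighbours in \emph{every} large subset of $V(T)$ --- generally does not exist (such a vertex would need in- and out-degree both close to $n$), and it is also not the property one needs. What insertion of $u$ into $P$ requires is that $u$ have an \emph{in}-neighbour appearing on $P$ before some \emph{out}-neighbour of $u$; the correct device is therefore to route $P_i$ through a spanning path of an out-dominating set followed later by a spanning path of an in-dominating set. Exact dominating sets have size $\Theta(\log n)$, which would push the connectivity to $\Omega(t+\log n)$; getting a bound linear in $t$ forces one to use small \emph{almost}-dominating sets built from low-degree vertices (as in Lemma~\ref{lem:dom}) and then to handle the resulting exceptional, undominated vertices separately. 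Those low in-/out-degree vertices, which can be inserted in only very few places and may number $\Omega(n)$, are the crux of Pokrovskiy's proof, and your sketch does not engage with them: ``attaching $u$ near an endpoint using the high connectivity'' consumes connectivity per vertex and cannot be done $\Omega(n)$ times.

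Third, the covering of $T[U]$ by one Hamilton path per strong component can produce $\Omega(n)$ pieces (e.g.\ when $T[U]$ is transitive), so the list is not short; and splicing a piece $H_j$ \emph{as a block} needs an edge from some path vertex to the first vertex of $H_j$ and from the last vertex of $H_j$ to the next path vertex, which is not guaranteed by any of the stated properties and again cannot be repaired by per-piece rerouting without blowing up the connectivity. The positive observation worth keeping from your approach is that once every $u\in U$ has an in-neighbour in a fixed early segment $B\subseteq P_i$ and an out-neighbour in a fixed later segment $A\subseteq P_i$, the single-vertex insertions can be performed sequentially without interference (a descent in the in/out pattern between $B$ and $A$ survives further insertions). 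But establishing that hypothesis for \emph{all} of $U$ under connectivity only $O(t)$ is precisely the content of the dominating-set/exceptional-vertex machinery that your proposal replaces with a pigeonhole heuristic, so as written the proof has a genuine missing core.
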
 

By using Theorem~\ref{thm:balanced}, we can easily derive Theorem~\ref{thm:controlpath} which provides us vertex disjoint paths of prescribed lengths connecting prescribed vertices.

\begin{THM}\label{thm:controlpath}
For integers $n,t \geq 1$ and $\ell_1 , \dots , \ell_t \geq n/(100t)$ with $\sum_{i=1}^{t}\ell_i \leq n$, every strongly $10^{14} t$-connected semicomplete digraph $D$ with $2t$ distinct vertices $x_1,\dots,x_t,y_1,\dots,y_t \in V(D)$ contains $t$ vertex-disjoint paths $P_1 , \dots , P_t$ so that $P_i$  starts at $x_i$ and ends at $y_i$ and $|V(P_i)| =\ell_i$ for each $i\in [t]$.
\end{THM}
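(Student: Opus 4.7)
The plan is to use Theorem~\ref{thm:balanced} to partition $V(D)$ into $t$ pieces $W_1,\dots,W_t$ of the prescribed sizes $\ell_1,\dots,\ell_t$, each containing the corresponding pair $\{x_i,y_i\}$ and each inducing a strongly $k$-connected semicomplete digraph for a small absolute constant $k$, and then to take a Hamilton path from $x_i$ to $y_i$ inside $D[W_i]$ as the desired $P_i$. The existence of this Hamilton path follows from Thomassen's classical theorem that every strongly $4$-connected tournament is Hamilton-connected, combined with Guo's theorem guaranteeing a strongly $4$-connected spanning tournament inside any strongly $10$-connected semicomplete digraph.

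The first step is to reduce to the case $\sum_{i\in [t]}\ell_i = n$, so that the sizes $a_i := \ell_i$ form a genuine partition of $V(D)$. Let $r := n-\sum_i\ell_i$. If $r$ is sufficiently large relative to $n/t$, say $r\geq n/(100(t+1))$, I would add an extra ``waste'' part $W_{t+1}$ of size $r$ with $Q_{t+1}=\emptyset$ and apply Theorem~\ref{thm:balanced} with $t+1$ parts. Otherwise $r$ is small compared to $10^{14}t$, and one may simply delete any $r$ vertices of $V(D)\setminus\bigcup_i\{x_i,y_i\}$; the resulting semicomplete digraph $D'$ on $\sum_i\ell_i$ vertices still has strong connectivity $\geq 10^{14}t-r=\Omega(10^{14}t)$, which is more than enough for the application below.

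Next, apply Theorem~\ref{thm:balanced} with the parameters $q = 2$, $\ell = 1$ (since $\delta(D)\geq n-1$ in any semicomplete digraph), $m = 10$ (so that the size condition $a_i \geq n/(10 tm)$ is satisfied thanks to $\ell_i\geq n/(100t)$), and $k = 10$ (so that by Guo's theorem each $D[W_i]$ contains a strongly $4$-connected spanning tournament). The connectivity bound $10^{8}qk^{2}\ell(k+\ell)^{2}tm^{2}\log m$ of Theorem~\ref{thm:balanced} then becomes $O(t)$, comfortably within the $10^{14}t$ budget. Invoking Thomassen's Hamilton-connectedness theorem in each $D[W_i]$ produces the Hamilton path from $x_i$ to $y_i$ on exactly $\ell_i$ vertices, and these paths are pairwise vertex-disjoint because the parts $W_i$ are.

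The main technical hurdle is the bookkeeping for the extras $r$: one must ensure that the size constraint $a_i \geq n/(10tm)$ of Theorem~\ref{thm:balanced} holds for every part (including any waste part) without having to push $m$ too large, since $m$ enters the connectivity bound quadratically. The case split above (waste part when $r$ is at least of order $n/t$, vertex deletion when $r$ is small) is tailored precisely so that in both regimes the parameter $m$ remains an absolute constant.
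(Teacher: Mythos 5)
Your overall strategy---apply Theorem~\ref{thm:balanced} with $Q_i=\{x_i,y_i\}$, $a_i=\ell_i$, and finish with Thomassen's Hamilton-connectedness theorem inside each part---is exactly the paper's proof, but two of your steps do not go through as written. First, the reduction to $\sum_i\ell_i=n$ is both unnecessary and, in its second branch, incorrect. Theorem~\ref{thm:balanced} is stated and proved under the hypothesis $\sum_i a_i\le n$: its proof produces pairwise disjoint sets $W_i$ with $|W_i|=a_i$ exactly, and the $n-\sum_i a_i$ leftover vertices are simply discarded, so you may take $a_i=\ell_i$ directly with no preprocessing. Your ``delete $r$ arbitrary vertices'' branch fails because the bound $r<n/(100(t+1))$ controls $r$ only relative to $n$, not relative to $t$: when $n\gg t^2$ one can have $r\gg 10^{14}t$, and deleting $r$ vertices then destroys all of the hypothesised connectivity, so the claim that the resulting digraph is still $\Omega(10^{14}t)$-connected is false.

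Second, your parameter choice $k=10$ (made so that Guo's theorem yields a strongly $4$-connected spanning tournament in each part) pushes the connectivity demanded by Theorem~\ref{thm:balanced} to $10^{8}\cdot 2\cdot 10^{2}\cdot 1\cdot 11^{2}\cdot t\cdot 10^{2}\cdot\log 10\approx 8\times 10^{14}t$, which exceeds the budget $10^{14}t$ rather than sitting ``comfortably within'' it. The detour through Guo's theorem is also unneeded: Thomassen's theorem is already stated for strongly $4$-connected \emph{semicomplete} digraphs, so one can take $k=4$ directly, for which the required connectivity is about $2.7\times 10^{13}t<10^{14}t$. With these two corrections---drop the reduction entirely and take $(k,\ell,m,q)=(4,1,10,2)$---your argument coincides with the paper's proof.
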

\begin{proof}
Thomassen~\cite{thomassen1980} proved that given any strongly 4-connected semicomplete digraph $D$ and two vertices $x \ne y$ of $D$, there is a Hamiltonian path from $x$ to $y$. Applying Theorem~\ref{thm:balanced} with $(k,\ell,m,q,a_i)=(4,1,10,2, \ell_i)$ and using the result of Thomassen, the theorem follows.
\end{proof}

By Proposition~\ref{prop:example}, the lower bound $\Omega(n)$ on the length of paths is best possible in the sense that the digraph $D$ may have the diameter linear in $n$.

We will derive both Theorems~\ref{thm:cycle} and~\ref{thm:balanced} from Lemma~\ref{lem:main} that provides a powerful connectivity structures of tournament-like digraphs, which may be of independent interest. To prove Lemma~\ref{lem:main}, we use the concept of robust linkage structures introduced by K\"uhn, Lapinskas, Osthus and Patel in \cite{kuhn2014proof}. Robust linkage structure is a very useful tool providing ``skeletons'' of highly connected tournaments that gives control on the connectivity. Further results were obtained by using this method \cite{kangstrong2018, kang2017sparse, kim2016bipartitions, kuhn2016cycle, pokrovskiy2015highly, pokrovskiy2016edge}. We also remark that  the sparse linkage structure introduced in \cite{kang2017sparse} is useful for our proof.

\section{Preliminaries and Tools}\label{sec:prelim}

\subsection{Basic terminology}\label{sec:basic}
Let $\mathbb{N}=\{1,2,\dots\}$ and for an integer $n$, we denote $[n] :=  \{1,\dots, n \}$. In particular, $[n] = \emptyset$ if $n \leq 0$. 
We 
always denote logarithm as $\log:= \log_2$.
For $k\in \mathbb{N}$ and tuples $(i_1,\dots, i_k), (j_1,\dots, j_k) \in \mathbb{N}^k$, we write $(i_1,\dots, i_k)< (j_1,\dots, j_k)$ 
if $i_s < j_s$ where $s= \min\{s'\in [k]: i_{s'}\neq j_{s'}  \}.$
We omit floors and ceilings and treat all large numbers as integers whenever it does not affect our argument.

We denote $D= (V(D), E(D))$ as a \emph{directed graph} or \emph{digraph} if $V(D)$ is a finite set and $E(D) \subseteq \{ \overrightarrow{uv} : u\neq v \in V(D)\}$.
For a vertex $v$ of a digraph $D$, we let
$N_D^-(v):= \{ u\in V(D): \overrightarrow{uv} \in E(D)\}$, $N_D^{+}(v):=\{ u\in V(D): \overrightarrow{vu} \in E(D)\}$ and $
N_D(v) := N_D^{-}(v) \cup N_D^+(v)$
be the set of \emph{in-neighbors}, the set of \emph{out-neighbors} and the set of \emph{neighbors} of $v$, respectively.
For $v\in V(D)$, let 
\begin{align*}
d_D^-(v)&:=|N_D^-(v)|, &d_D^+(v)&:=|N_D^+(v)|,& 
d_D(v)&:= |N_D(v)|,\\
\delta^-(D)&:=\min_{v\in V(D)} \{d_D^{-}(v)\},
&\delta^+(D)&:=\min_{v\in V(D)} \{d_D^{+}(v)\} \enspace
\text{and} &\delta(D)&:=\min_{v\in V(D)} \{d(v)\}.
\end{align*}

A digraph $D$ is \emph{semicomplete} if $\delta(D)=n-1$, and a semicomplete digraph $D$ is a \emph
{tournament} if it does not contain a cycle of length two. For a path $P$ of $D$, we write $P = (v_1 , 
\dots , v_k)$ if $P$ is a path with $V(P) = \left \{v_1 , \dots , v_k \right \}$ and $E(P) = \left \{
\overrightarrow{v_iv_{i+1}} \: : \: i \in [k-1] \right \}$ and we write $\Int(P):=\{ v_2,\dots, v_{k-1}\}$.
We say a cycle $C$ is a \emph{Hamiltonian cycle} of $D$ if 
$V(C)=V(D)$.
For two digraphs $D$ and $D'$, we say that $D'$ is a \emph{subgraph} of $D$ if $V(D')\subseteq V(D)$ and $E
(D')\subseteq E(D)$.
For a given set $U\subseteq V(D)$, we write $D[U]$ to denote the digraph with vertex set $U$ and edge set $\{\overrightarrow{uv}: 
u,v\in U, \overrightarrow{uv} \in E(D)\}$ and
we write $D\setminus U:= D[V(D)\setminus U]$.
For disjoint sets $U,V \subseteq V(D)$, we write $D[U,V]$ to denote the digraph with vertex set $U\cup V$ and edge set $\{ \overrightarrow{uv} \in E(D): |\{u,v\}\cap U|=|\{u,v\}\cap V| =1\}$.

For $k\in \mathbb{N}$, we say that an ordered pair $(u,v) \in V(D)\times V(D)$ is \emph{$k$-connected in $D$} if for any subset $S\subseteq V(D)\setminus\{u,v\}$ with $|S|\leq k-1$, there exists a path from $u$ to $v$ in $D\setminus S$.
We say that a digraph $D$ is \emph{strongly $k$-connected} if $|V(D)|\geq k+1$ and every ordered pair $(u,v) \in V(D)\times V(D)$ is $k$-connected in $D$.
For a vertex $v\in V(D)$ and a set $U$, we say that 
$(v,U)$ is \emph{$k$-connected in $D$} if for any subset $S\subseteq V(D)\setminus\{v\}$ with $|S| \leq k-1$, there exists a path from $v$ to a vertex in $U\setminus S$ in $D\setminus S$.
Similarly, we say $(U,v)$ is \emph{$k$-connected in $D$} if for any subset $S\subseteq V(D)\setminus\{v\}$ with $|S| \leq k-1$, there exists a path from a vertex in $U\setminus S$ to $v$ in $D\setminus S$.
Note that $U$ does not have to be a subset of $V(D)$ in this definition.
However, if $(v,U)$ is $k$-connected in $D$ or $(U,v)$ is $k$-connected in $D$, then either $v\in U$ or $|U\cap V(D)| \geq k$. Note that for $k\geq 2$ and $u\neq v \in V(D)$, the pair $(u,\{v\})$ is never $k$-connected in $D$ while $(u,v)$ may be $k$-connected in $D$.


\subsection{Some lemmas}
Now we state some basic results we use later in the proof. The following can be easily deduced by using Hall's theorem. We omit the proof.
\begin{FACT}\label{fact: perfect matching}
Let $G$ be a bipartite graph on vertex partition $(A,B)$ with $|A|=|B|$.
If  $d_G(a)+d_G(b)\geq |A|$ for any $(a,b)\in A\times B$, then $G$ has a perfect matching.
\end{FACT}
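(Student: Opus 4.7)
The plan is to reduce the statement to Hall's marriage theorem by showing that the degree sum condition forces Hall's condition on the $A$-side. Specifically, I will argue the contrapositive: if Hall's condition fails, there is a nonempty set $S\subseteq A$ with $|N_G(S)|<|S|$, and from such an $S$ I will extract a pair $(a,b)\in A\times B$ whose degrees jointly violate the hypothesis.

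First I would fix such a minimal bad $S$ (any $S$ works, but minimality is not needed). Since $|A|=|B|$ and $|N_G(S)|<|S|\leq |A|=|B|$, the set $B\setminus N_G(S)$ is nonempty, so I can pick $b\in B\setminus N_G(S)$; I also pick any $a\in S$. The point of the construction is that both vertices have severely limited neighborhoods: every neighbor of $a$ lies in $N_G(S)$, giving $d_G(a)\leq |N_G(S)|\leq |S|-1$, while $b$ has no neighbor in $S$, giving $d_G(b)\leq |A|-|S|$. Adding these bounds yields $d_G(a)+d_G(b)\leq |A|-1<|A|$, contradicting the hypothesis.

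Hence Hall's condition holds for every $S\subseteq A$, and Hall's theorem produces a matching saturating $A$; since $|A|=|B|$, this matching is a perfect matching of $G$. I expect no real obstacle here — the only thing to check carefully is that $B\setminus N_G(S)$ is nonempty when Hall's condition fails, which is immediate from $|A|=|B|$ together with $|N_G(S)|<|S|\leq |A|$.
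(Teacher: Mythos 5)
Your proof is correct, and it is exactly the deduction from Hall's theorem that the paper alludes to (the paper states the fact "can be easily deduced by using Hall's theorem" and omits the proof). The key step — choosing $a\in S$ and $b\in B\setminus N_G(S)$ for a violating set $S$ and summing the resulting degree bounds $d_G(a)\leq |S|-1$ and $d_G(b)\leq |A|-|S|$ — is the standard argument and is carried out without gaps.
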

The following theorem by Camion~\cite{camion1959} is useful to find a cycle of certain length in a tournament.
\begin{THM}\cite{camion1959}\label{thm:camion}
Every strongly connected tournament contains a Hamiltonian cycle.
\end{THM}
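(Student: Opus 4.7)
The plan is a standard extremal argument on the length of a longest cycle. First observe that $T$ contains at least one cycle: pick any edge $\overrightarrow{uv}$ and, by strong connectivity, a path from $v$ to $u$, which closes into a cycle. Let $C = (v_1, v_2, \ldots, v_k)$ together with the edge $v_k \to v_1$ be a longest cycle of $T$, and suppose for contradiction that $k < |V(T)|$, so that there exists a vertex $w \in V(T) \setminus V(C)$. I will extend $C$ in each of two structurally distinct cases, contradicting maximality.

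Case~1 is that some external vertex $w$ has both an in-neighbor and an out-neighbor on $V(C)$. Classify each $v_j$ as \emph{in} if $v_j \to w$ and \emph{out} if $w \to v_j$. Since both classes are nonempty and the classification is defined on the cyclically ordered set $V(C)$, there must be an index $i$ (indices mod $k$) where the class switches from \emph{in} to \emph{out}, i.e.\ $v_i \to w$ and $w \to v_{i+1}$. Inserting $w$ between $v_i$ and $v_{i+1}$ produces a cycle of length $k+1$, a contradiction.

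Case~2 is that every external vertex either dominates $V(C)$ entirely or is dominated by $V(C)$ entirely. Let $A$ be the external vertices that dominate $V(C)$ and $B$ the external vertices dominated by $V(C)$, so that $V(T) \setminus V(C) = A \cup B$ is a disjoint union. Strong connectivity of $T$ forces some edge to leave $V(C)$ (placing a vertex in $B$) and some edge to enter $V(C)$ (placing a vertex in $A$), so $A, B \neq \emptyset$. Moreover, for any fixed $a \in A$, the strong connectivity of $T$ gives a path from $v_1$ to $a$; since no edge goes directly from $V(C)$ into $A$, this path must first enter $B$ and then eventually cross into $A$, and in particular it contains some edge $\overrightarrow{ba}$ with $b \in B$ and $a \in A$. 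Using $v_k \to b$ (since $b \in B$) and $a \to v_1$ (since $a \in A$), we obtain the longer cycle $v_1 \to v_2 \to \cdots \to v_k \to b \to a \to v_1$ of length $k+2$, again contradicting the maximality of $C$. Hence $k = |V(T)|$ and $C$ is Hamiltonian.

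The argument is elementary, and the only mildly subtle step is the Case~2 deduction that strong connectivity forces a $B$-to-$A$ edge, rather than merely that both $A$ and $B$ are nonempty. Once that is observed, the explicit insertion of $b$ and $a$ into $C$ to enlarge it is automatic, and the rest of the argument is a direct verification.
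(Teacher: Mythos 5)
Your proof is correct. The paper states Camion's theorem only as a cited classical result without giving a proof, and your argument is the standard one: take a longest cycle, insert an outside vertex at a sign change of its adjacencies if it has neighbours of both types on the cycle, and otherwise use strong connectivity to locate an edge from the dominated class $B$ to the dominating class $A$ and splice in two vertices.
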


Moon~\cite{moon1966} extended the result of Camion~\cite{camion1959} by proving the following Theorem.
\begin{THM}\cite{moon1966}\label{thm:moon}
Let $k$ and $n$ be integers with $3 \leq k \leq n$. For any strongly connected $n$-vertex tournament $T$ and a vertex $v \in V(T)$, $T$ contains a cycle $C$ with $|V(C)|=k$ and $v \in V(C)$.
\end{THM}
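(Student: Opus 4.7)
My plan is to prove vertex-pancyclicity by induction on $k$, showing that $v$ lies in a cycle of each length $\ell$ with $3 \le \ell \le n$.

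For the base case $\ell=3$, set $X := N_T^+(v)$ and $Y := N_T^-(v)$, both nonempty by strong connectivity. I would claim that some edge points from $X$ to $Y$; any such edge $\overrightarrow{xy}$ with $x \in X$, $y \in Y$ yields the triangle $v \to x \to y \to v$ containing $v$. Otherwise, each $x \in X$ satisfies $N_T^+(x) \subseteq X$ (no out-edge to $Y$ by assumption, and no out-edge to $v$ because $\overrightarrow{vx} \in E(T)$), so $x$ cannot reach $v$, contradicting strong connectivity.

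For the inductive step, suppose $v$ lies on a cycle $C = (v = u_1, u_2, \dots, u_k)$ with $k < n$, and let $W := V(T) \setminus V(C) \ne \emptyset$. If some $w \in W$ has both in- and out-neighbors on $C$, then walking around $C$ I can find consecutive $u_i, u_{i+1}$ with $\overrightarrow{u_i w}, \overrightarrow{w u_{i+1}} \in E(T)$ (indices mod $k$), and inserting $w$ between $u_i$ and $u_{i+1}$ gives a $(k+1)$-cycle through $v$. Otherwise each $w \in W$ interacts with $C$ in only one direction, so $W = A \sqcup B$ with $C \to A$ and $B \to C$. Strong connectivity forces both parts to be nonempty, since the only edges entering $C$ from $W$ come from $B$ and the only edges leaving $C$ into $W$ end in $A$. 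Moreover, there must exist an edge $\overrightarrow{ab}$ with $a \in A$ and $b \in B$: otherwise every out-edge of $A$ would lie inside $A$ (since $A$ has no out-edges to $C$ and, by assumption, none to $B$), so no vertex of $A$ could reach anything outside $A$, again violating strong connectivity. Using $\overrightarrow{u_{k-1} a}$ (from $C \to A$) and $\overrightarrow{b u_1}$ (from $B \to C$), the cycle $(u_1, \dots, u_{k-1}, a, b)$ has length $k+1$ and contains $v$.

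The main obstacle I expect is the second case of the inductive step: after partitioning $W = A \sqcup B$, one has to extract a single edge from $A$ to $B$ that can be grafted onto $C$ at the correct place, rather than merely an edge somewhere inside $W$. Both the non-emptiness of $A$ and $B$ and the existence of an $A \to B$ edge are forced by strong connectivity, but each requires its own short argument, and one must then be careful about which two consecutive vertices of $C$ to drop when splicing $a$ and $b$ into the cycle.
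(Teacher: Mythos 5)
Your proof is correct. The paper does not prove this statement at all --- it simply cites Moon (1966) and uses it as a black box --- so there is no internal argument to compare against; your induction (base triangle via the $N^+(v)$/$N^-(v)$ partition and an $X\to Y$ edge forced by strong connectivity, then extending a $k$-cycle to a $(k+1)$-cycle either by inserting a vertex of $W$ with mixed adjacencies at a sign change along $C$, or by splicing an $A\to B$ edge in place of one cycle vertex) is exactly the classical argument of Moon, and all the steps you flag as delicate (non-emptiness of $A$ and $B$, existence of the $A\to B$ edge, and the choice to drop $u_k$ so that $v=u_1$ survives) are handled correctly.
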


The following lemma can be proved by using basic definition of strong $k$-connectivity. We omit the proof.

\begin{LEM}\label{lem:glue}
Let $k\in \mathbb{N}$ and $D, D_1, D_2$ be directed graphs and $U,W$ be vertex sets. Then the following hold.
\begin{itemize}
\item For $v\in V(D_1\cup D_2)$, if $(v,U)$ is $k$-connected in $D_1$ and $(u,W)$ is $k$-connected in $D_2$ for all $u\in U$, then $(v,W)$ is $k$-connected in $D_1\cup D_2$.
 \item For $v\in V(D_1\cup D_2)$, if $(U,v)$ is $k$-connected in $D_1$ and $(W,u)$ is $k$-connected in $D_2$ for all $u\in U$, then $(W,v)$ is $k$-connected in $D_1\cup D_2$.
 \item Suppose $D[W]$ is a strongly $k$-connected digraph. If $(u,W)$ and $(W,u)$ are both $k$-connected in $D[U\cup W]$ for all $u\in U$, then $D[U\cup W]$ is strongly $k$-connected digraph.
 \end{itemize}
\end{LEM}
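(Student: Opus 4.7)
The plan is to prove each of the three items by stitching together paths across the relevant digraphs, exploiting the basic fact that any walk from $a$ to $b$ in a digraph contains an $a$-$b$ path.

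For item 1, I will fix $S \subseteq V(D_1 \cup D_2) \setminus \{v\}$ with $|S| \leq k-1$. Since $S \cap V(D_1) \subseteq V(D_1) \setminus \{v\}$ has size at most $k-1$ and $(v,U)$ is $k$-connected in $D_1$, I obtain a path $P_1$ in $D_1 \setminus (S \cap V(D_1))$ from $v$ to some $u \in U \setminus S$. Applying the second hypothesis to this specific $u$, the pair $(u,W)$ is $k$-connected in $D_2$, and $S \cap V(D_2) \subseteq V(D_2) \setminus \{u\}$ again has size at most $k-1$, which yields a path $P_2$ in $D_2 \setminus (S \cap V(D_2))$ from $u$ to some $w \in W \setminus S$. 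Both $P_1$ and $P_2$ avoid $S$, so their concatenation is a walk from $v$ to $w$ in $(D_1 \cup D_2) \setminus S$, which contains the desired path. Item 2 follows by reversing the direction of every arc in the same argument.

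For item 3, write $H := D[U \cup W]$, pick an arbitrary ordered pair $(x,y) \in V(H) \times V(H)$ and any $S \subseteq V(H) \setminus \{x,y\}$ with $|S| \leq k-1$, and split into cases by which of $U, W$ contain $x$ and $y$. When $x,y \in W$, strong $k$-connectivity of $D[W]$ applied to the deletion set $S \cap W \subseteq W \setminus \{x,y\}$ directly produces an $x$-$y$ path inside $D[W] \setminus S \subseteq H \setminus S$. When $x \in U$ and $y \in W$, I first use $(x,W)$ being $k$-connected in $H$ to extract a path $P_1$ in $H \setminus S$ from $x$ to some $w_1 \in W \setminus S$, then use strong $k$-connectivity of $D[W]$ with deletion set $S \cap W$ to obtain a $w_1$-to-$y$ path $P_2$ in $D[W] \setminus (S \cap W)$ (trivial if $w_1 = y$), and concatenate. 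The case $x \in W$, $y \in U$ is analogous, using instead that $(W, y)$ is $k$-connected in $H$; and the case $x,y \in U$ follows by chaining three paths through two waypoints $w_1, w_2 \in W \setminus S$.

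The one delicate point is that one cannot shortcut the mixed cases by invoking item 1 with singleton target $\{y\}$, because $(w, \{y\})$ fails to be $k$-connected in $D[W]$ whenever $y$ itself can be deleted --- the very distinction between ordered-pair connectivity and set-connectivity emphasized in the remark following the definition of $k$-connectivity. The resolution is to lean on strong $k$-connectivity of $D[W]$ directly to supply the ordered-pair path ending at $y$; after that, the walk-to-path reduction closes every case without further computation.
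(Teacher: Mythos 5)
Your proof is correct and is exactly the routine definition-chasing argument the paper has in mind (the authors omit the proof as a direct consequence of the definitions). The details you supply — intersecting the deletion set $S$ with the relevant vertex set, noting that the endpoints obtained lie in $U\setminus S$ or $W\setminus S$ because they belong to the subgraph being used, reducing walks to paths, and handling the mixed cases in item 3 via strong $k$-connectivity of $D[W]$ rather than via item 1 with a singleton target — are all sound.
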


The following theorem by Kim, K\"uhn and Osthus~\cite{kim2016bipartitions} is useful to prove Corollary~\ref{cor:twocycles}.
\begin{THM}\cite{kim2016bipartitions}\label{thm:kko}
For an integer $k \geq 1$ and a strongly $10^9 k^6 \log (2k)$-connected tournament $T$, there exists a partition $V_1,V_2$ of $V(T)$ such that 
$T[V_1]$, $T[V_2]$ and $T[V_1,V_2]$ are strongly $k$-connected.
\end{THM}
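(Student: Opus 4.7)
The plan is to combine a probabilistic bipartition with the robust linkage machinery for tournaments. I would take a uniformly random bipartition $V(T)=V_1\sqcup V_2$ in which each vertex is assigned independently to $V_1$ or $V_2$ with probability $1/2$, and argue that with positive probability all three digraphs $T[V_1]$, $T[V_2]$, and $T[V_1,V_2]$ are strongly $k$-connected. The $10^9 k^6 \log(2k)$ connectivity hypothesis is intended precisely to absorb both the random-splitting loss and a subsequent union bound over the relevant obstruction configurations.

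The main workhorse is the robust linkage structure of K\"uhn, Lapinskas, Osthus and Patel~\cite{kuhn2014proof}: in a strongly $m$-connected tournament one can produce, for any ordered pair $(u,v)$, a family of $\Omega(m/\Poly(k))$ internally vertex-disjoint $u$-$v$ paths of length $O(\log k)$, with strong local control on which internal vertices are used. For a fixed pair $u,v$ and a fixed candidate separator $S$ with $|S|<k$, only a small fraction of these paths hit $S$, and each surviving path of length $\ell=O(\log k)$ lies entirely inside a prescribed $V_i$ with probability $2^{-\ell}=k^{-O(1)}$, and similarly has probability $k^{-O(1)}$ of being properly alternating between $V_1$ and $V_2$. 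A Chernoff bound then yields $k$ internally disjoint ``good'' paths of each required type with failure probability $\exp(-\Omega(k^2\log k))$.

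A direct union bound over all triples $(u,v,S)$ is far too lossy, so the next step is to exploit the linkage structure to reduce strong $k$-connectivity of $T[V_i]$ and of $T[V_1,V_2]$ to a much smaller collection of ``canonical'' obstruction configurations, typically by passing to a bounded set of well-chosen anchor vertices and then showing that every separator is effectively controlled by its behavior on that set. Iterating the Chernoff estimate over this reduced family of events produces all three connectivity properties simultaneously with positive probability, and selecting any outcome in this intersection yields the desired partition.

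The hardest step will be the bipartite condition on $T[V_1,V_2]$, because strong $k$-connectivity in a bipartite digraph demands $k$ internally disjoint \emph{alternating} paths between every pair of vertices, and alternation is a rigid parity constraint on the random partition. To deal with this, the robust linkage family must be enriched so that between any $u,v$ one has short paths available whose lengths range over a short consecutive window, ensuring that the correct parity of endpoint placement is always accessible; once this enrichment is in place, the same Chernoff and reduced-union-bound argument concludes strong $k$-connectivity of $T[V_1,V_2]$ as well.
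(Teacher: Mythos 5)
First, a point of order: the paper does not prove this statement at all --- Theorem~\ref{thm:kko} is imported verbatim from \cite{kim2016bipartitions} and used as a black box (to derive Corollary~\ref{cor:twocycles}), so there is no in-paper proof to compare your argument against. Judged on its own terms, your sketch has a genuine gap at its decisive step. You correctly observe that a union bound over all triples $(u,v,S)$ with $|S|<k$ is hopeless (there are roughly $n^{k+1}$ of them against a per-event failure probability that does not depend on $n$), but the proposed remedy --- ``reduce to a bounded family of canonical obstruction configurations via well-chosen anchor vertices'' --- is precisely the missing idea, not a routine reduction, and you give no mechanism for why an arbitrary separator of $T[V_i]$ or of $T[V_1,V_2]$ would be controlled by its trace on a bounded anchor set. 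The known proofs (both in \cite{kim2016bipartitions} and in the linkage-structure arguments of Section~\ref{sec:balanced} of this paper) sidestep the union bound entirely by being essentially deterministic: one constructs a \emph{bounded} number of linkage structures (almost in/out-dominating sets joined by disjoint connecting paths, as in Lemma~\ref{lem:dom} and {\bf Step 1}), assigns whole structures to each side, and then \emph{every} vertex reaches the core of its side simply because it has $k$ out-neighbours in the in-dominating sets and $k$ in-neighbours in the out-dominating sets assigned there; no probabilistic statement about separators is ever needed. Without an analogue of that deterministic domination step, your Chernoff-plus-union-bound scheme does not close.

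A secondary but real problem is the quantitative claim underlying your path family: a tournament only guarantees dominating sets of size $\Theta(\log n)$, not $O(\log k)$, so ``$\Omega(m/\mathrm{poly}(k))$ internally disjoint $u$--$v$ paths of length $O(\log k)$'' is not available for arbitrary pairs. What one actually gets (cf.\ Lemma~\ref{lem:dom}) are \emph{almost}-dominating sets of size $O(\log k)$ whose exceptional (undominated) vertices form a small but nonempty set, and handling those exceptional vertices is a substantial portion of any correct proof (compare {\bf Step 2} and {\bf Step 4} of Section~\ref{sec:balanced}); your sketch does not account for them, and the probability $2^{-\ell}$ computation would in any case degrade to $n^{-O(1)}$ if genuine dominating sets were used. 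The parity/alternation idea for $T[V_1,V_2]$ is a reasonable instinct, but it inherits both gaps above.
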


The following corollary will be useful to prove Theorem~\ref{thm:cycle}. Indeed, slight modification of the proof of Theorem~\ref{thm:kko} gives us a more general result, but Corollary~\ref{cor:twocycles} is sufficient for our purpose.

\begin{COR}\label{cor:twocycles}
Let $\ell$ and $n$ be integers with $n\geq 6$ and $3\leq \ell \leq n-3$ and $T$ be a strongly $10^9$-connected $n$-vertex tournament with a vertex $v \in V(T)$. There exist two vertex-disjoint cycles $C_1$ and $C_2$ in $T$ such that $v \in V(C_1)$, $|V(C_1)| =\ell$ and $|V(C_2)| = n-\ell$.
\end{COR}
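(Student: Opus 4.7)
The plan is to reduce the problem to a partition problem and then invoke classical Hamiltonicity results. Concretely, I aim to construct a partition $V(T) = V_1 \sqcup V_2$ with $v \in V_1$, $|V_1| = \ell$, $|V_2| = n-\ell$, and both $T[V_1]$ and $T[V_2]$ strongly connected. Such a partition suffices: applying Moon's theorem (Theorem~\ref{thm:moon}) to $T[V_1]$ (strongly connected on $\ell \ge 3$ vertices) with vertex $v$ and length $\ell$ yields a Hamiltonian cycle $C_1 \subseteq T[V_1]$ of length $\ell$ through $v$; applying Camion's theorem (Theorem~\ref{thm:camion}) to $T[V_2]$ (strongly connected on $n-\ell \ge 3$ vertices) yields a Hamiltonian cycle $C_2 \subseteq T[V_2]$ of length $n-\ell$. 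These $C_1, C_2$ are automatically vertex-disjoint and satisfy the conclusion.

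To build the partition I would first apply Theorem~\ref{thm:kko} with $k=1$ (the hypothesis needs only strong $10^9 \cdot 1^6 \cdot \log 2 = 10^9$-connectivity, which is granted) to obtain an initial partition $V_1^{(0)}, V_2^{(0)}$ with $T[V_1^{(0)}]$, $T[V_2^{(0)}]$ and $T[V_1^{(0)}, V_2^{(0)}]$ each strongly connected, relabeling if necessary so that $v \in V_1^{(0)}$. If $|V_1^{(0)}| = \ell$ I am done; otherwise I adjust sizes by moving one vertex at a time across the partition. To reduce $|V_1|$ by one, I pick $u \in V_1 \setminus \{v\}$ satisfying (i) $T[V_1 \setminus \{u\}]$ is strongly connected (i.e. $u$ is a non-cut vertex of $T[V_1]$) and (ii) $u$ has both an in- and an out-neighbor in $V_2$, which combined with strong connectivity of $T[V_2]$ forces strong connectivity of $T[V_2 \cup \{u\}]$; the case $|V_1^{(0)}| < \ell$ is handled by the symmetric move. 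A strongly connected tournament on at least four vertices has at least two non-cut vertices, so (i) can always be arranged with $u \neq v$ as long as the source part has at least four elements, and this is the case at every step because the source part has size at least $\ell+1 \ge 4$ (or $n-\ell+1 \ge 4$) until the final move.

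The \textbf{main obstacle} is maintaining condition (ii) across the sequence of up to $\Theta(n)$ moves: transferring $u$ out of $V_1$ can leave some vertex in $V_2$ with no cross-neighbor, eventually destroying the bipartite structure required to continue. I would address this by strengthening the move rule: pick a non-cut vertex $u$ of $T[V_1]$ that is \emph{not} the unique in-neighbor or out-neighbor in $V_1$ of any vertex of $V_2$; such a $u$ exists because the $10^9$-connectivity of $T$ forces every vertex to have many cross-edges in any partition, so the pool of non-cut vertices dwarfs the set of forbidden ones. Alternatively, following the remark of the authors preceding Corollary~\ref{cor:twocycles}, one can sidestep this bookkeeping entirely by making a small modification to the proof of Theorem~\ref{thm:kko} in \cite{kim2016bipartitions} to directly produce a partition with prescribed sizes containing a prescribed vertex in the designated part. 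Either way, once the partition is in hand, Moon's and Camion's theorems conclude the proof immediately.
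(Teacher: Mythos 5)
Your route is genuinely different from the paper's, even though both start from Theorem~\ref{thm:kko} and finish with Moon and Camion. You insist on first manufacturing a partition with the exact sizes $\ell$ and $n-\ell$ via an iterative exchange argument; the paper never does this. It takes the partition $V_1,V_2$ from Theorem~\ref{thm:kko} as is, applies Moon's theorem inside whichever part is large enough to extract the one cycle whose length fits inside that part (through $v$ when that part is $V_1$), and then observes that each leftover vertex of that part has an in- and an out-neighbour in the other part (because $T[V_1,V_2]$ is strongly connected), so Lemma~\ref{lem:glue} makes the complement strongly connected and Camion supplies the second, spanning cycle. This sidesteps all of the bookkeeping in the second half of your write-up. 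Two points in your version deserve attention. First, the ``main obstacle'' you identify is illusory: a move from $V_1$ to $V_2$ only requires the transferred vertex to have an in- and an out-neighbour in the \emph{current} $V_2$, which contains $V_2^{(0)}$; strong connectivity of $T[V_1^{(0)},V_2^{(0)}]$ already gives every vertex of $V_1^{(0)}$ such neighbours in $V_2^{(0)}$, and $V_2$ only grows, so condition (ii) is automatic at every step. Consequently your strengthened move rule is unnecessary --- which is fortunate, because its justification (``the pool of non-cut vertices dwarfs the set of forbidden ones'') does not follow from anything you establish: you only know that at least two non-cut vertices exist. Second, the assertion that a strong tournament on at least four vertices has two non-cut vertices is left without proof; it does follow from Theorem~\ref{thm:moon} (the cycle of length $|V_1|-1$ through $v$ misses a vertex $u\neq v$ with $T[V_1\setminus\{u\}]$ strong), and that single non-cut vertex avoiding $v$ is all you actually need at each step. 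With these repairs your argument is correct, but the paper's two-case argument is considerably shorter.
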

\begin{proof}
By Theorem~\ref{thm:kko}, there exists a partition $V_1,V_2$ of $V(T)$ such that $T[V_1]$, $T[V_2]$, and $T[V_1,V_2]$ are strongly connected. We may assume that $v \in V_1$. 

If $\ell \leq |V_1|$, then by Theorem~\ref{thm:moon}, $T[V_1]$ contains a cycle $C_1$ of length $\ell$ that contains $v$. 
Since $T[V_1,V_2]$ is strongly connected, every vertex $v\in V_1$ has an out-neighbor and an in-neighbor in $V_2$.
As $T[V_2]$ is strongly connected, Lemma~\ref{lem:glue} implies that $T[(V_1 \setminus V(C_1)) \cup V_2]$ is strongly connected, thus Theorem~\ref{thm:camion} implies that $T[(V_1 \setminus V(C_1)) \cup V_2]$ contains a spanning cycle $C_2$ of length $n-\ell$.

If $\ell > |V_1|$, then by Theorem~\ref{thm:moon}, $T[V_2]$ contains a cycle $C_2$ of length $n-\ell$. Again, since $T[V_1,V_2]$ and $T[V_1]$ are strongly connected, Lemma~\ref{lem:glue} implies that $T[V_1 \cup (V_2 \setminus V(C_2))]$ is strongly connected, thus Theorem~\ref{thm:camion} implies that $T[V_1 \cup (V_2 \setminus V(C_2))]$ contains a spanning cycle $C_1$ of length $\ell$ which contains $v$. This completes the proof.
\end{proof}


The following lemma guarantees the existence of a suitable almost dominating set in a semicomplete digraph which plays a crucial role to construct robust linkage structures.

\begin{LEM}\label{lem:dom}
Let $\ell\in \mathbb{N}$ and let $D$ be a digraph with $\delta(D) \geq n-\ell$.
For each vertex $x\in V(D)$ and $c\in \mathbb{N}$, there exists sets $A,B\subseteq V(D)$ such that the 
following hold:
 \begin{enumerate}[label=(\roman*)]
\item $|A|,|B|\leq c$ and $D[A]$ contains a spanning path from $x$ and $D[B]$ contains a spanning path to $x$.
\item $|V(D)\setminus \bigcup_{v\in A} N^{-}_D(v)| \leq  2^{1-c} d_D^+(x) + 2\ell$ and
$|V(D)\setminus \bigcup_{v\in B} N^{+}_D(v)| \leq  2^{1-c} d_D^-(x)+ 2\ell$.
\end{enumerate}
\end{LEM}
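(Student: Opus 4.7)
My plan is to build $A$ greedily as a directed path starting at $x$; the set $B$ comes from running the same argument on the reverse of $D$ (which swaps in- and out-neighborhoods and reverses the resulting path to end at $x$), so I focus on $A$. Setting $v_1 = x$, I extend a directed path $v_1 v_2 \cdots v_i$ by picking $v_{i+1} \in N^+_D(v_i)$, while tracking the uncovered set $U_i := V(D) \setminus \bigcup_{j \le i} N^-_D(v_j)$. The starting bound is free from $d_D(x) \ge n-\ell$: $|U_1| = n - d^-_D(x) \le d^+_D(x) + \ell$. The inductive target is to halve $|U_i|$ at each step up to additive $O(\ell)$ error, so that after at most $c-1$ extensions the recursion telescopes to $|U_c| \le 2^{1-c} d^+_D(x) + 2\ell$, giving (ii).

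For the extension step I would work with the candidate set $C := N^+_D(v_i) \cap U_i$. First, every $u \in U_i$ satisfies $u \not\to v_i$ (because $u \notin N^-_D(v_i)$), so $v_i \to u$ unless $u$ is one of the at most $\ell$ non-neighbors of $v_i$; hence $|C| \ge |U_i| - \ell$. Second, because each vertex has at most $\ell$ non-neighbors, there are at most $|U_i|\ell/2$ non-edge pairs inside $U_i$, so $D[U_i]$ has at least $|U_i|(|U_i|-1-\ell)/2$ directed edges and average in-degree $(|U_i|-1-\ell)/2$. Averaging this in-degree over $C$ (subtracting at most $\ell(|U_i|-1)$ for the $\le \ell$ vertices of $U_i \setminus C$) then produces some $v_{i+1} \in C$ with $|N^-_D(v_{i+1}) \cap U_i| \ge |U_i|/2 - O(\ell)$, and hence $|U_{i+1}| \le |U_i|/2 + O(\ell)$. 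The extension is a genuine lengthening of the path, because $U_i$ is disjoint from $\{v_1,\dots,v_{i-1}\}$ (each $v_j$ lies in $N^-_D(v_{j+1})$ for $j < i$, so $v_j \notin U_i$).

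If the procedure fails to extend before reaching $|A| = c$, the only obstruction is that $v_i$ has very small out-degree; but then $d_D(v_i) \ge n-\ell$ forces $|V(D) \setminus N^-_D(v_i)| \le d^+_D(v_i) + \ell$, so $|U_i| = O(\ell)$ already and I stop with $|A| < c$ and (ii) trivially satisfied. The path condition (i) is preserved throughout by construction, with $v_1 = x$ at the start.

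The main obstacle will be bookkeeping the additive error precisely enough that the geometric sum $\sum_{i \ge 0} 2^{-i} \le 2$ fits inside the promised $2\ell$ buffer rather than a larger constant multiple of $\ell$. A naive per-step error of $O(\ell)$ alone sums to too much, so I would combine the two non-neighbor estimates ($|U_i \setminus C| \le \ell$ and at most $|U_i|\ell/2$ non-edge pairs in $U_i$) inside a single averaging, together with the initial slack $|U_1| - d^+_D(x) \le \ell$, rather than handling each source of error separately.
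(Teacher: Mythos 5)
Your proposal is correct and is essentially the paper's own argument: a greedy path extension from $x$ in which the undominated set $U_i$ is (roughly) halved at each step, with $U_i\setminus N^+_D(v_i)$ of size at most $\ell$ and the new vertex chosen by averaging in-degrees. The one delicate point you flag is resolved exactly as you suggest and as the paper does — average the in-degree over $C=N^+_D(v_i)\cap U_i$ only (i.e., count edges of $D[C]$, at least $|C|(|C|-\ell)/2$ of them), which yields $|U_{i+1}|\leq |U_i|/2+\ell$ per step and hence the stated $2\ell$ after telescoping.
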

\begin{proof}
Suppose that for some $i\in [c]$, we have chosen a path $(x=v_1,\dots, v_i)$ of $D$ such that
$$|V(D)\setminus \bigcup_{j=1}^{i} N^{-}_D(v_j)| \leq  2^{1-i} d_D^+(x) + (2-2^{1-i})\ell.$$
Note that such a path exists for $i=1$. 
We will either find a desired set $A$, or extend the path by adding a new vertex. Let 
$$U:= V(D)\setminus \bigcup_{j=1}^{i} N^{-}_D(v_j) \enspace \text{and}\enspace U' := U\setminus N^+_D(v_i),$$ 
then we have $|U'|\leq \ell$ as $U'\subseteq V(D)\setminus N_D(v_i)$.
If $U \setminus U' = \emptyset$, then $|U| = |U'| \leq \ell$ and $A=\left \{v_1,\dots,v_i \right \}$ is a desired set and we are done. Otherwise,
we choose a vertex $v_{i+1} \in U\setminus U'$ with maximum in-degree in $D[U \setminus U']$.
Then it is easy to see that $v_{i+1}$ has at least $(|U\setminus U'|-\ell)/2$ in-neighbors in $U\setminus U'$. Then the set  $V(D)\setminus \bigcup_{j=1}^{i+1}N^{-}_D(v_j)$ is included in the union of $U'$ and the set of vertices in $U \setminus U'$ that are not in-neighbors of $v_{i+1}$. Thus we have
\begin{align*}
|V(D)\setminus \bigcup_{j=1}^{i+1}N^{-}_D(v_j) | &\leq |U'| + \left (|U\setminus U'| - \frac{1}{2}(|U\setminus U'| - \ell) \right ) = |U|/2 + \ell/2+ |U'|/2 \\
& \leq 
2^{-i}d_D^+(x) + (2- 2^{-i})\ell=
2^{1-(i+1)}d_D^+(x) + (2 - 2^{1-(i+1)})\ell.
\end{align*}
As $v_{i+1}\in U\setminus U' = U\cap N^+_D(v_i)$, $(v_1,\dots, v_{i+1})$ forms a path of $D$. By repeating this $c$ times, we obtain the desired set $A$.
Similarly, we can obtain $B$ by symmetry.
\end{proof}
We frequently use the following lemma which is a combined reformulation of Claim~3.1 and Lemma~3.4 in \cite{kang2017sparse}. In \cite{kang2017sparse}, it is only stated for digraphs with at least $k$ vertices. However, the lemma also holds for digraphs with less than $k$ vertices as we can simply take $A=B=V(D)$ in the case.
\begin{LEM}[Kang, Kim, Kim, and Suh~\cite{kang2017sparse}]\label{lem:sparse}
Let $k \in \mathbb{N}$ and $D$ be a digraph such that $\delta(D) \geq n-\ell$.
Then there exist two sets $A,B\subseteq V(D)$ satisfying the following.
 \begin{enumerate}[label=(\roman*)]
\item $|A|,|B|\leq 2k+ \ell -2 $.
\item For any $w\in V(D)$, both $(w,B)$ and $(A,w)$ are $k$-connected in $D$.
\end{enumerate}
\end{LEM}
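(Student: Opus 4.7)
Plan: By reversing the orientation of every edge of $D$, the existence of $A$ reduces to that of $B$, so I focus on constructing a set $B \subseteq V(D)$ of size at most $2k + \ell - 2$ such that $(w, B)$ is $k$-connected for every $w \in V(D)$. The construction proceeds in two phases, matching the decomposition $2k + \ell - 2 = \ell + 2(k - 1)$.

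In the first phase, I build an initial $B^{(1)}$ of size at most $\ell$ with $(w, B^{(1)})$ being $1$-connected for every $w$. The key observation is that any two distinct terminal strongly connected components $C_1, C_2$ of $D$ admit no edges between them, so vertices in $C_1$ are non-neighbors of every vertex of $C_2$ and vice versa; combined with $\delta(D) \geq n - \ell$ this forces $D$ to have at most $\ell$ terminal SCCs. Taking one representative per terminal SCC gives $B^{(1)}$, and every $w$ reaches some terminal SCC in the condensation DAG, hence reaches $B^{(1)}$. In the second phase, I iteratively augment: given $B^{(i)}$ with $(w, B^{(i)})$ being $i$-connected for every $w$, I produce $B^{(i+1)} = B^{(i)} \cup \{b, b'\}$ achieving $(i+1)$-connectivity. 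A failure of $B^{(i)}$ at level $i+1$ is witnessed by a pair $(w, S)$ with $|S| = i$ so that the set $R$ of vertices reachable from $w$ in $D - S$ is disjoint from $B^{(i)} \setminus S$. The structural consequences of $\delta(D) \geq n - \ell$---namely that $R$ emits no edges to $V(D) \setminus (R \cup S)$ and that each $u \in V(D) \setminus (R \cup S)$ has at least $|R| - \ell + 1$ out-neighbors in $R$---constrain the family of such tight cuts strongly enough that two well-chosen additions to $B^{(i)}$ simultaneously resolve all of them.

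The main obstacle is rigorously justifying the ``two vertices per level'' step. Naive greedy choices risk patching one witness while creating another, so the selection must exploit the submodularity of vertex cuts: if $(w_1, S_1)$ and $(w_2, S_2)$ are two failure witnesses with reachable sets $R_1, R_2$, a standard uncrossing argument produces new witnesses from suitable intersections and unions of the pairs, forcing the lattice of tight $(w, B^{(i)})$-cuts of size $i$ to be ``thin''. Combined with near-semicompleteness---which bounds the width of this cut lattice in terms of $\ell$---two simultaneous additions at each level suffice to raise the minimum $(w, B)$-cut by one across all $w$. Iterating the augmentation $k - 1$ times contributes $2(k - 1)$ vertices, yielding the final bound $|B| \leq \ell + 2(k - 1) = 2k + \ell - 2$, as required.
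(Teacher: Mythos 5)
Your symmetry reduction, your Phase~1 (at most $\ell$ terminal strong components because two distinct terminal components are completely non-adjacent, so one representative each gives a set $B^{(1)}$ with $(w,B^{(1)})$ $1$-connected for all $w$), and your reformulation of the target condition as ``$B$ meets every non-empty $R$ with out-boundary $\partial^+(R):=N^+(R)\setminus R$ of size at most $k-1$'' are all sound; so is the uncrossing observation that, given a transversal $B^{(i)}$ of all sets with $|\partial^+|\le i-1$, the inclusion-minimal sets with $|\partial^+|\le i$ avoiding $B^{(i)}$ are pairwise disjoint (submodularity of $|\partial^+|$ plus minimality). The gap is the augmentation step itself: you assert, but do not prove, that there are at most \emph{two} such minimal violating sets, and this is false under the induction hypothesis you state. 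Take $k=2$, $\ell=2$, $V=\{r_1,r_2,r_3,z\}\cup W$ with $N^+(r_1)=N^+(r_2)=\{r_3\}$, $N^+(r_3)=\{z\}$, $N^+(z)=\{r_1,r_2\}$, $r_1$ and $r_2$ the unique non-adjacent pair, and every $w\in W$ sending edges to all of $r_1,r_2,r_3,z$ (with $D[W]$ any tournament). Then $\delta(D)=n-2$, the unique terminal strong component is $T=\{r_1,r_2,r_3,z\}$, and $B^{(1)}=\{z\}$ is a legitimate output of your Phase~1. But $\{r_1\},\{r_2\},\{r_3\}$ are three pairwise disjoint sets with out-boundaries $\{r_3\},\{r_3\},\{z\}$ of size $1$, all avoiding $B^{(1)}$; indeed deleting $S=\{r_3\}$ isolates $r_1$ (and $r_2$), and deleting $S=\{z\}$ isolates $r_3$, so any valid $B$ containing $z$ must also contain all of $r_1,r_2,r_3$. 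No two added vertices can repair $B^{(1)}=\{z\}$, so the step ``transversal of level $i-1$ of size $\ell+2(i-1)$ $\Rightarrow$ two more vertices give a transversal of level $i$'' is simply not implied by your hypothesis.

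The lemma itself survives in this example ($B=\{r_1,r_2,r_3\}$ works, and the choice $B^{(1)}=\{r_1\}$ would have led your scheme there), which shows the real difficulty: correctness of each augmentation depends on \emph{how} the earlier sets were chosen, so your induction does not close without a substantially stronger invariant or a global accounting that trades slack between levels (note that when Phase~1 genuinely needs $\ell$ terminal components, those components are forced to be tiny and mutually non-adjacent, which in turn suppresses the number of violating sets later --- this trade-off is where the bound $2k+\ell-2$ actually comes from). The phrase ``two well-chosen additions suffice'' is precisely the content of the lemma and cannot be left to ``standard uncrossing''; my crude count along your lines only bounds the number of pairwise disjoint violating sets by $O(k+\ell)$, not by $2$. (For reference, the paper does not prove this lemma either; it imports it as a reformulation of Claim~3.1 and Lemma~3.4 of~\cite{kang2017sparse}, so you should consult that argument rather than patch this one.)
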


Recall that for an integer $k \geq 1$, a digraph $D$ is \emph{$k$-linked} if $|V(D)| \geq 2k$ and for any $k$ (not 
necessarily distinct) ordered pairs $(x_1,y_1),\dots, (x_k,y_k)$ of vertices of $D$, there exist $k$ 
distinct internally vertex-disjoint paths $P_1,\dots, P_k$ such that  $P_i$ is a path from $x_i$ to $y_i$ 
such that $V(P)\cap \{x_1 , \dots , x_k , y_1 , \dots , y_k\} = \{x_i, y_i\}$ for each $i\in [k]$. Pokrovskiy~\cite{pokrovskiy2015highly} proved that highly connected tournaments are highly linked.

\begin{THM}[Pokrovskiy~\cite{pokrovskiy2015highly}]\label{thm:link}
For each $k\in \mathbb{N}$, every strongly $452k$-connected tournament is $k$-linked.
\end{THM}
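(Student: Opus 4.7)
The plan is to follow the standard route for linkage results in highly connected tournaments via a \emph{robust linkage structure}. Given the strongly $452k$-connected tournament $T$ and the $2k$ distinct vertices $x_1,\dots,x_k,y_1,\dots,y_k$, I would construct the required paths in three stages: funnel each $x_i$ to a small central subtournament $L$ through a short ingress path; do the symmetric egress from $L$ to each $y_i$; and close the linkage inside $L$ using its own $k$-linkedness.

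First, I would apply Lemma~\ref{lem:sparse} with $\ell=1$ to extract hub sets $A,B\subseteq V(T)$ of size at most $2k-1$ such that $(A,w)$ and $(w,B)$ are $k$-connected in $T$ for every $w$. This gives economical access from any source to $B$ and from $A$ to any sink. Next, I would build the core $L$ iteratively on $O(k)$ vertices: each iteration uses Lemma~\ref{lem:dom} to attach a short path whose union almost-dominates $T$, so that after $O(k)$ iterations $L$ has bounded codegree to $T\setminus L$ and is itself strongly highly connected. With care, this construction yields a \emph{robust} $L$ that remains $k$-linked even after the deletion of a small number of its vertices, which is exactly the property needed to absorb the final $k$ internal linkages.

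With $L$ in place, I would process the pairs $(x_i,y_i)$ one at a time. For each $i$, the $k$-connectivity of $(x_i,B)$ and of $(A,y_i)$ combined with Menger's theorem yield internally disjoint paths $P_i^-$ from $x_i$ to an unused input $u_i\in L$ and $P_i^+$ from an unused output $v_i\in L$ to $y_i$, updating the forbidden set after each insertion to preserve disjointness with all previously chosen segments. Invoking the robust $k$-linkedness of $L$, I would then route internally disjoint paths from $u_i$ to $v_i$ inside $L$ and concatenate to obtain the required $P_i$.

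The main obstacle will be keeping the connectivity losses strictly linear in $k$ at every stage, so that the total overhead fits within a bound like $452k$. The delicate point, which is Pokrovskiy's main contribution, is constructing a linkage core $L$ of size linear in $k$ that is simultaneously robustly $k$-linked; naive constructions yield quadratic or worse dependence and miss the linear target entirely. Achieving the specific constant $452$ would require careful bookkeeping of the sizes of the hub sets $A$ and $B$, the lengths of the ingress/egress paths, and the internal robustness margin of $L$ after those paths are removed.
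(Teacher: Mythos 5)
First, a point of comparison: the paper does not prove Theorem~\ref{thm:link} at all. It is imported verbatim from Pokrovskiy~\cite{pokrovskiy2015highly} and used as a black box (its extension to near-semicomplete digraphs, Corollary~\ref{cor:digraph_link}, is likewise stated without proof). So the only question is whether your sketch would itself constitute a proof, and it would not.

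The gap is that the one step you would need is precisely the one you defer. Your outline is the robust-linkage-structure scheme of K\"uhn, Lapinskas, Osthus and Patel: build a small core $L$ out of almost-dominating paths, funnel each $x_i$ into $L$ and each $y_i$ out of it, and close up inside $L$. That scheme is known to yield connectivity $O(k\log k)$, not $O(k)$: a construction in the spirit of Lemma~\ref{lem:dom} produces a dominating-type path of length $c$ only at the price of an exceptional set of size about $2^{1-c}d^+(x)$, so controlling the exceptional vertices forces $c=\Theta(\log k)$ (or worse) per unit, and a core built from $\Theta(k)$ such units has size $\Theta(k\log k)$; since the whole core must fit inside the budget of vertices one is allowed to delete, this is exactly where the extra $\log k$ in the connectivity comes from. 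You acknowledge this yourself when you write that constructing a linear-size, simultaneously robustly $k$-linked core ``is Pokrovskiy's main contribution'' and that naive constructions ``miss the linear target entirely'' --- but you then do not supply that construction. Pokrovskiy's actual argument is structurally different: he does not route everything through a single prebuilt robust core, but works directly with in- and out-dominating sets attached to the terminals and an iterative rerouting argument calibrated so that every quantity stays linear in $k$. Naming the key lemma as ``delicate'' and attributing it to the original author is not a proof of it; as written, your proposal establishes at best the previously known $O(k\log k)$ bound and leaves the actual statement (linear connectivity, let alone the constant $452$) unproved. A secondary, in principle fixable, issue: in your ``one pair at a time'' stage, the $k$-connectedness of $(x_i,B)$ only tolerates deleting at most $k-1$ vertices, whereas the previously embedded ingress and egress paths may contain far more than $k-1$ internal vertices unless you first prove they can be chosen of bounded length; your sketch asserts ``short'' paths without justification.
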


Theorem~\ref{thm:link} can be extended to the following corollary. We omit the proof here, because it can be proved by the almost same proof as in \cite{pokrovskiy2015highly} with obvious modifications using Lemma~\ref{lem:dom}.

\begin{COR}\label{cor:digraph_link}
For all $k,\ell\in \mathbb{N}$, let $D$ be a strongly $(452k + 188 \ell)$-connected digraph with $\delta(D) \geq n-\ell$. Then $D$ is $k$-linked.
\end{COR}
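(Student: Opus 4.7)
My plan is to mirror Pokrovskiy's proof of Theorem~\ref{thm:link} step by step, replacing every invocation of a small dominating set in the tournament by an invocation of Lemma~\ref{lem:dom}, which provides an \emph{almost} dominating set at the price of leaving up to $2\ell$ vertices undominated. Recall that the tournament argument in \cite{pokrovskiy2015highly} proceeds, for the given $k$ pairs $(x_1,y_1),\dots,(x_k,y_k)$, by constructing for each $i$ a small ``out-generator'' rooted at $x_i$ and an ``in-generator'' rooted at $y_i$: these are small vertex sets together with internally disjoint short paths so that $x_i$ can be linked to any large enough subset of the out-generator (and symmetrically for $y_i$). The generators are then combined with disjoint intermediate vertices, chosen via the strong connectivity of the host, to produce the $k$ linkage paths.

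The only step where the tournament structure is genuinely exploited is in the iterative construction of the generators, where one repeatedly picks a vertex of maximum out-degree (or in-degree) inside the currently undominated set and uses the identity $d^+(v)+d^-(v)=n-1$ to roughly halve the undominated set at each step. In our setting $\delta(D)\geq n-\ell$ guarantees each vertex has at most $\ell$ non-neighbors, and Lemma~\ref{lem:dom} is exactly the analogue required: after $c$ rounds it produces an almost-dominating set of size $c$ that misses at most $2^{1-c}d^{\pm}(x)+2\ell$ vertices. Taking $c$ of order $\log n$ makes the first term negligible and contributes an additive slack of exactly $2\ell$ undominated vertices per generator, rooted at a prescribed vertex and traversed along a prescribed spanning path (which is crucial for the generator construction).

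It then remains to propagate this slack through Pokrovskiy's argument. Each of the $2k$ generators carries at most $2\ell$ exceptional undominated vertices, but these only matter once the generators are being stitched together into the final $k$ linkage paths, where they must be routed around using the connectivity reserve of $D$. Following Pokrovskiy's case analysis, the $452k$ units of connectivity continue to cover the tournament-side routing, while an additional $188\ell$ units are charged for globally detouring around the $O(\ell)$ undominated vertices arising across all generators. The main obstacle will be this bookkeeping: one has to verify that the $O(\ell)$ exceptional vertices from different generators can be absorbed simultaneously in a single connectivity-eating pass, so that the $\ell$-term stays additive in $k$ rather than multiplicative. Once this is checked, no further conceptual ingredient beyond Pokrovskiy's framework combined with Lemma~\ref{lem:dom} is required, which justifies the author's remark that the modifications are ``obvious.''
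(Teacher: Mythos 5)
Your proposal matches the paper's treatment: the authors give no detailed proof of Corollary~\ref{cor:digraph_link} either, stating only that it follows from the argument of \cite{pokrovskiy2015highly} with ``obvious modifications'' using Lemma~\ref{lem:dom}, which is precisely the adaptation you outline (replace the tournament's exact dominating-set/halving step by the almost-dominating sets of Lemma~\ref{lem:dom}, at the cost of an additive $2\ell$ of undominated vertices per structure). The bookkeeping you flag --- propagating this $O(\ell)$ slack through Pokrovskiy's case analysis so that it is absorbed by the extra $188\ell$ of connectivity and stays additive rather than multiplicative in $k$ --- is exactly what the paper also leaves unverified to the reader.
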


The following is the main ingredient in the proof of both Theorems~\ref{thm:cycle}~and~\ref{thm:balanced}. 
Lemma~\ref{lem:main} states that every highly connected tournament-like digraph $D$ contains pairwise disjoint vertex sets $W_1,\dots, W_t$ such that for each $j\in [k]$, $T[W_j]$ is strongly $k$-connected, and each vertex $u$ outside $\bigcup W_i$ can be added to $W_j$ for many $j\in [t]$ while preserving the connectivity of $D[W_j]$. To be more precise, for each $u\notin \bigcup W_i$ there exists a set $I_u \subseteq [t]$ of indices such that $T[W_j\cup U]$ is strongly $k$-connected for any set $U$ of vertices as long as $j\in I_u$ for all $u\in U$.
These sets $W_j$ and the relationship between the sets $W_1,\dots, W_t$ and vertices in $V(D)\setminus \bigcup W_i$ provides very useful linkage structures in tournament-like digraphs. We prove it in Section~\ref{sec:balanced}.

\begin{LEM}\label{lem:main}
Let $k, t, \ell, m, n, q\in \mathbb{N}$ with $t,m\geq 2$ and $q \geq 1$. 
Suppose that $D$ is an $n$-vertex strongly $10^{8} q k^2 \ell (k+\ell)^2  tm^2 \log(m)$-connected digraph with $\delta(D) \geq n- \ell$ and $Q_1 , \dots , Q_t \subseteq V(D)$ are $t$ disjoint sets with $|Q_i| \leq q$ for each $i \in [t]$. Then there exist disjoint sets $W_1 , \dots , W_t \subseteq V(D)$ that satisfy the following for all $i\in [t]$ and $w\in V_0$, where $V_0:=V(D) \setminus \bigcup_{j=1}^{t}W_j$.
\begin{enumerate}[label=\text{\rm (A\arabic*)}]
\item \label{A5}$Q_i \subseteq W_i$.
\item \label{A2} $D[W_i]$ is strongly $k$-connected.

\item \label{A1} $|W_i| \leq \frac{n}{50 mt}$.
\item \label{A3}$\big|\big\{i'\in [t] :|N_D^{+}(w)\cap W_{i'}|\geq k \text{ and }
|N_D^{-}(w)\cap W_{i'}| \geq k \big\}\big|\geq (1- \frac{1}{30m^{1/2}k})t$.
\item \label{A4}$\big|\big\{w' \in V_0 :|N_D^{+}(w')\cap W_i|\geq k \text{ and }
|N_D^{-}(w')\cap W_i| \geq k  \big\}\big|\geq (1- \frac{1}{10^5 k^4m^2})|V_0|$.
\end{enumerate}
\end{LEM}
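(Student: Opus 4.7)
My plan is to construct each $W_i$ as $V(H_i)\cup X_i$, where $H_i$ is a small strongly $k$-connected \emph{core} containing $Q_i$ and $X_i$ is a controlled ``bulk'' extension obtained via a near-random assignment. Property (A2) will follow from Lemma~\ref{lem:glue} once every $x\in X_i$ has at least $k$ in-neighbors and $k$ out-neighbors inside $V(H_i)$; properties (A3)--(A4) will be arranged by engineering each $H_i$ to be highly dominating and by controlling sizes via the random step.

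\textbf{Step~1 (cores).} For each $i\in[t]$ I build $H_i$ of size $\operatorname{poly}(k,\ell,m,q)\log(km)$ as follows. First, apply Lemma~\ref{lem:sparse} in $D$ to obtain sparse-linkage sets $A,B$ of size $O(k+\ell)$ such that $(A,w)$ and $(w,B)$ are $k$-connected in $D$ for every $w$. Using the high linkedness from Corollary~\ref{cor:digraph_link}, which is applicable because the connectivity hypothesis dwarfs $452\cdot O(tq)+188\ell$, I find, for every $i$, pairwise internally-disjoint paths that link each vertex of $Q_i$ to disjoint copies $A_i,B_i$ of such sparse-linkage sets; Lemma~\ref{lem:glue} then makes the resulting $H_i$ strongly $k$-connected. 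To set up (A4), I further augment each $H_i$ by iterating Lemma~\ref{lem:dom} with parameter $c=\Theta(\log(km))$, applied $k$ times on pairwise disjoint vertex pools for the in-direction and $k$ times for the out-direction. The augmentation guarantees that at most $n/(10^{5}k^{4}m^{2})$ vertices outside $\bigcup_j V(H_j)$ have fewer than $k$ in-neighbors or fewer than $k$ out-neighbors in $V(H_i)$, which will directly yield (A4) after the extension.

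\textbf{Step~2 (extension, giving (A1) and (A3)).} Let $V':=V(D)\setminus\bigcup_i V(H_i)$. First move into $V_0$ every vertex of $V'$ that has fewer than $k$ in-neighbors or out-neighbors in $V(H_i)$ for the index $i$ it could be assigned to; by Step~1 only a $1/(10^{5}k^{4}m^{2})$-fraction per index is lost. Next assign each surviving vertex independently to an index $i\in[t]$ with probability $p:=1/(100mt)$, leaving it in $V_0$ otherwise. Chernoff's inequality applied to $|N^\pm(w)\cap X_{i'}|$ (whose mean is $pd^\pm(w)\geq pK\gg k$, since $d^\pm(w)\geq K=10^{8}qk^{2}\ell(k+\ell)^{2}tm^{2}\log m$) shows that for each fixed $(w,i')$ the event ``$w$ is bad for $W_{i'}$'' has probability at most $\exp(-\Omega(pK))$, comfortably smaller than $1/(30m^{1/2}k)$. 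Averaging over $i'$ (and handling the mild positive dependence by standard martingale or sampling-without-replacement arguments) yields (A3) for each $w$ with high probability, and the size bound (A1) follows from Chernoff concentration of $|X_i|$ around $pn$. Finally, (A2) holds because every surviving $x\in X_i$ has $\geq k$ in-neighbors and $\geq k$ out-neighbors in $V(H_i)$, so Lemma~\ref{lem:glue} concludes that $D[W_i]$ is strongly $k$-connected.

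\textbf{Main obstacle.} The most delicate point is Step~1: the $t$ cores must be small enough to respect (A1), yet strongly dominating enough for (A4), and pairwise disjoint. The iterated use of Lemma~\ref{lem:dom} is what forces the $\log m$ factor in the connectivity hypothesis, the sparse-linkage invocation is responsible for the $(k+\ell)^{2}$ factor, and the linkage from Corollary~\ref{cor:digraph_link} is responsible for the linear dependence on $tq$. Maintaining strong $k$-connectivity throughout all augmentations via repeated applications of Lemma~\ref{lem:glue}, and making the Chernoff bound in Step~2 survive a union bound over $|V_0|\leq n$ vertices simultaneously with (A3) and (A4), are the main technical hurdles and together account for the polynomial dependence on $k,\ell,m$ in the stated connectivity threshold.
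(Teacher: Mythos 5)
Your Step~1 contains a fatal gap: small strongly $k$-connected cores $H_i$ of size $\Poly(k,\ell,m,q)\log(km)$ simply do not exist in general. Proposition~\ref{prop:example} of this very paper exhibits strongly $s$-connected tournaments in which \emph{every} strongly $k$-connected subtournament ($k\geq 2$) has $\Omega(n)$ vertices, so no construction can produce the cores you describe. The specific mechanism you propose also does not work: Lemma~\ref{lem:sparse} guarantees that $(A,w)$ and $(w,B)$ are $k$-connected \emph{in $D$}, i.e.\ via paths that may wander through all of $V(D)$, not inside the small set $H_i$; Lemma~\ref{lem:glue} therefore cannot be invoked to conclude that $D[H_i]$ itself is strongly $k$-connected. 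The only way to make $W_i$ internally $k$-connected is to put the actual connecting paths (from the sinks of the in-dominating sets to the sources of the out-dominating sets) inside $W_i$, and Corollary~\ref{cor:digraph_link} gives no control whatsoever on their lengths — this is exactly the obstacle the paper's proof is organised around. One must over-provision (build $h=1200k(k+\ell)tm\gg t$ disjoint dominating-set-plus-path structures), keep only the $t$ structures that happen to be short by an averaging and Tur\'an-type independent-set argument, and then carefully redistribute the vertices of the discarded structures without destroying connectivity or the size bound. Your proposal skips all of this, and the size claim for the cores is where the argument breaks.

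There is a second, independent gap in Step~2. The vertices you ``move into $V_0$'' because they lack $k$ in- or out-neighbours in the cores are precisely the vertices threatening \ref{A3}, which is a condition that every vertex \emph{remaining in $V_0$} must satisfy; parking them in $V_0$ does not dispose of them. Vertices that are undominated for too many indices must instead be absorbed into a suitable $W_{i(x)}$ together with a set of $k$ helper neighbours (the paper's Step~4 with $\widetilde F^A,\widetilde F^B$), and this has to be done asymmetrically in the two directions because $\delta_0^-$ may be far smaller than $\delta_0^+$, so the helper in-neighbours may not exist if one fixes the target index first. Finally, your Chernoff-plus-union-bound step is not sound as stated: for a vertex $w$ of in-degree close to the connectivity $K$, the per-index failure probability is $\exp(-\Omega(pK))$, a quantity independent of $n$, so the union bound over all of $V_0$ cannot be closed for large $n$; the paper avoids randomness entirely and instead uses the deterministic degree-counting bounds \eqref{eq: E size}--\eqref{eq: outside F} tied to $\delta_0^{\pm}$.
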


\subsection{An example}
Here, we prove Proposition~\ref{prop:example} showing that some lower bound on $a_i$ in Theorem~\ref{thm:balanced} is necessary.

\begin{proof}[Proof of Proposition~\ref{prop:example}]
Let $m\geq 2$ be an integer such that $n= \binom{s+1}{2}m + 2s +1 + s'$ with $1\leq s' < \binom{s+1}{2}$.
Let $$V:= \{  x_s,\dots, x_1, y_1,\dots, y_s\} \cup \{z^i_{j,\ell}: i\in [m], j\in [s], \text{ and } \ell\in[j]\} \cup \{w^*, w_1,\dots, w_{s'} \}.$$
We define an ordering $<$ on $V$ as follows.
\begin{itemize}
\item $x_s< \dots< x_1< w^* < w_1<\dots < w_{s'} < y_1 < \dots, <y_s$.
\item For all $i\in [m], j\in [s]$, and $\ell\in [j]$, we have
$ w^* < z^i_{j,\ell} < w_{1}$.
\item For all $i,i'\in [m], j,j'\in [s]$, $\ell\in [j]$, and $\ell'\in [j']$, we have 
$z^i_{j,\ell} < z^{i'}_{j',\ell'}$ if and only if $(i,j,\ell)<(i',j',\ell')$. 
\end{itemize}
Let 
$$E:=\Big\{\overrightarrow{ z^1_{j,\ell} x_j}, \overrightarrow{y_j z^m_{j,\ell}} : j\in [s], \ell\in [j] \Big\}
\text{ {\Large $\cup$} } \Big\{ \overrightarrow{z^{i+1}_{j,\ell}z^i_{j,\ell}} : (i,j)\in [m-1]\times [s], \ell\in [j]\Big\}.$$
See Figure~1 for an illustration of $E$. Let $T$ be a tournament with $V(T)=V$ and 
$$E(T):= E\cup \{ \overrightarrow{uv} \in V(T)\times V(T) : u<v \text{ and } \overrightarrow{vu}\notin E \}.$$
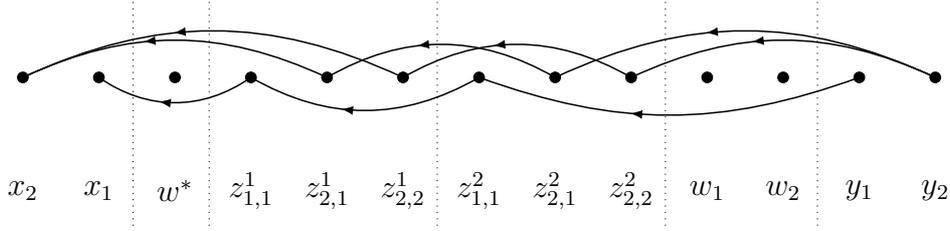
\begin{figure}\label{fig:1}
\centering
\begin{tikzpicture}[scale=1]


\filldraw[fill=black] (1,0) circle (2pt);
\filldraw[fill=black] (2,0) circle (2pt);
\filldraw[fill=black] (3,0) circle (2pt);
\filldraw[fill=black] (4,0) circle (2pt);
\filldraw[fill=black] (5,0) circle (2pt);
\filldraw[fill=black] (6,0) circle (2pt);
\filldraw[fill=black] (7,0) circle (2pt);
\filldraw[fill=black] (8,0) circle (2pt);
\filldraw[fill=black] (9,0) circle (2pt);
\filldraw[fill=black] (10,0) circle (2pt);
\filldraw[fill=black] (11,0) circle (2pt);
\filldraw[fill=black] (12,0) circle (2pt);
\filldraw[fill=black] (13,0) circle (2pt);

\draw[dotted] (2.45,1) to (2.45,-2);

\draw[dotted] (3.45,1) to (3.45,-2);

\draw[dotted] (6.45,1) to (6.45,-2);
\draw[dotted] (9.45,1) to (9.45,-2);

\draw[dotted] (11.45,1) to (11.45,-2);

\node at (1,-1.5) {\large $x_2$};
\node at (2,-1.5) {\large $x_1$};
\node at (3,-1.5) {\large $w^*$};
\node at (4,-1.5) {\large $z^{1}_{1,1}$};
\node at (5,-1.5) {\large $z^{1}_{2,1}$};
\node at (6,-1.5) {\large $z^{1}_{2,2}$};
\node at (7,-1.5) {\large $z^{2}_{1,1}$};
\node at (8,-1.5) {\large $z^{2}_{2,1}$};
\node at (9,-1.5) {\large $z^{2}_{2,2}$};
\node at (10,-1.5) {\large $w_1$};
\node at (11,-1.5) {\large $w_2$};
\node at (12,-1.5) {\large $y_1$};
\node at (13,-1.5) {\large $y_2$};

\draw[middlearrow={latex}, line width=0.2mm] (13,0) to[bend right=25] (9,0);
\draw[middlearrow={latex}, line width=0.2mm] (13,0) to[bend right=25] (8,0);
\draw[middlearrow={latex}, line width=0.2mm] (12,0) to[bend left=20] (7,0);

\draw[middlearrow={latex}, line width=0.2mm] (9,0) to[bend right=30] (6,0);
\draw[middlearrow={latex}, line width=0.2mm] (8,0) to[bend right=30] (5,0);

\draw[middlearrow={latex}, line width=0.2mm] (7,0) to[bend left=30] (4,0);

\draw[middlearrow={latex}, line width=0.2mm] (6,0) to[bend right=25] (1,0);
\draw[middlearrow={latex}, line width=0.2mm] (5,0) to[bend right=25] (1,0);

\draw[middlearrow={latex}, line width=0.2mm] (4,0) to[bend left=35] (2,0);

\end{tikzpicture}
\caption{Ordering of vertices from left to right and edges in $E$ when $s=m=s'=2$.}
\end{figure}
Let $X:=\{x_1,\dots, x_s\}$ and  $Y:=\{y_1,\dots,y_s\}$,
and for each $i\in [m]$, let 
$
Z^i:=\{ z^i_{j,\ell} :j\in [s], \ell\in [j]\}.$
For all $j\in [k]$ and $\ell\in [j]$, we let $P^{j,\ell} :=  (z^{m}_{j,\ell},\dots, z^{1}_{j,\ell})$.
Then $\{P^{j,\ell} : j\in [k] \text{ and } \ell\in [j]\}$ forms a collection of internally vertex-disjoint paths.

Now we prove that $T$ is strongly $s$-connected.
First, for a vertex $v\in V(T)$, either $v\in Y$ or it has at least $s$ out-neighbors in $Y\cup\{w_{s'}\}$. As $Y\subseteq N^+_T(w_{s'})$, we conclude that $(v,Y)$ is $s$-connected in $T$. For each $j\in [s]$, 
\begin{align*}
&\left\{ \big(y_j,P^{j,j},x_j\big), \big(y_j,y_{j+1},P^{j+1,1},x_{j+1}\big), \dots, \big(y_j,y_{s},P^{s,1},x_{s}\big)\right\}\\
 &\cup
\left\{\big(y_j,z_{j,1}^m, z_{j,1}^{m-1},P^{1,1}, x_{1}\big), \dots, \big(y_j,z_{j,j-1}^m, z_{j,j-1}^{m-1},P^{j-1,1}, x_{j-1}\big)\right\}
\end{align*}
forms a collection of $s$ paths from $y_j$ to $X$, where they intersect only at $y_j$. See Figure~2 for an illustration.
Thus, $(y_j, X)$ is $s$-connected in $T$. Together with Lemma~\ref{lem:glue}, this implies that for any $v\in V(T)$, the pair $(v,X)$ is $s$-connected in $T$.

\begin{figure}\label{fig:1}
\centering
\begin{tikzpicture}[scale=0.8]


\filldraw[fill=black] (0,0.5) circle (2pt);
\filldraw[fill=black] (1,0) circle (2pt);
\filldraw[fill=black] (2,-0.5) circle (2pt);
\filldraw[fill=black] (3,0) circle (2pt);

\draw[dotted] (2.45,2) to (2.45,-2.5);

\draw[dotted] (3.45,2) to (3.45,-2.5);

\draw[dotted] (9.45,2) to (9.45,-2.5);

\draw[dotted] (15.45,2) to (15.45,-2.5);
\draw[dotted] (17.45,2) to (17.45,-2.5);

\filldraw[fill=black] (4,-0.5) circle (2pt);
\filldraw[fill=black] (5,-0.5) circle (2pt);
\filldraw[fill=black] (6,0) circle (2pt);
\filldraw[fill=black] (7,0.5) circle (2pt);
\filldraw[fill=black] (8,0) circle (2pt);
\filldraw[fill=black] (9,0) circle (2pt);
\filldraw[fill=black] (10,-0.5) circle (2pt);
\filldraw[fill=black] (11,-0.5) circle (2pt);
\filldraw[fill=black] (12,0) circle (2pt);
\filldraw[fill=black] (13,0.5) circle (2pt);
\filldraw[fill=black] (14,0) circle (2pt);
\filldraw[fill=black] (15,0) circle (2pt);
\filldraw[fill=black] (16,0) circle (2pt);
\filldraw[fill=black] (17,0) circle (2pt);
\filldraw[fill=black] (18,0) circle (2pt);
\filldraw[fill=black] (19,0) circle (2pt);
\filldraw[fill=black] (20,0.5) circle (2pt);

\node at (0,-2) {\large $x_3$};
\node at (1,-2) {\large $x_2$};
\node at (2,-2) {\large $x_1$};
\node at (1, 2) {\large $X$};

\node at (3,-2) {\large $w^*$};
\node at (4,-2) {\large $z^{1}_{1,1}$};
\node at (5,-2) {\large $z^{1}_{2,1}$};
\node at (6,-2) {\large $z^{1}_{2,2}$};
\node at (7,-2) {\large $z^{1}_{3,1}$};
\node at (8,-2) {\large $z^{1}_{3,2}$};
\node at (9,-2) {\large $z^{1}_{3,3}$};

\node at (6.5, 2) {\large $Z^1$};

\node at (10,-2) {\large $z^{2}_{1,1}$};
\node at (11,-2) {\large $z^{2}_{2,1}$};
\node at (12,-2) {\large $z^{2}_{2,2}$};
\node at (13,-2) {\large $z^{2}_{3,1}$};
\node at (14,-2) {\large $z^{2}_{3,2}$};
\node at (15,-2) {\large $z^{2}_{3,3}$};

\node at (12.5, 2) {\large $Z^2$};

\node at (16,-2) {\large $w_1$};
\node at (17,-2) {\large $w_2$};
\node at (18,-2) {\large $y_1$};
\node at (19,-2) {\large $y_2$};
\node at (20,-2) {\large $y_3$};
\node at (19, 2) {\large $Y$};

\draw[middlearrow={latex}, line width=0.2mm] (19,0) to[bend right=25] (12,0);
\draw[middlearrow={latex}, line width=0.2mm] (12,0) to[bend right=25] (6,0);
\draw[middlearrow={latex}, line width=0.2mm] (6,0) to[bend right=25] (1,0);

\draw[middlearrow={latex}, line width=0.2mm] (19,0) to (20,0.5);
\draw[middlearrow={latex}, line width=0.2mm] (20,0.5) to[bend right=25] (13,0.5);
\draw[middlearrow={latex}, line width=0.2mm] (13,0.5) to[bend right=25] (7,0.5);
\draw[middlearrow={latex}, line width=0.2mm] (7,0.5) to[bend right=25] (0,0.5);

\draw[middlearrow={latex}, line width=0.2mm] (19,0) to[bend left=25] (11,-0.5);
\draw[middlearrow={latex}, line width=0.2mm] (11,-0.5) to[bend left=25] (5,-0.5);
\draw[middlearrow={latex}, line width=0.2mm] (5,-0.5) to (10,-0.5);
\draw[middlearrow={latex}, line width=0.2mm] (10,-0.5) to[bend left=25] (4,-0.5);
\draw[middlearrow={latex}, line width=0.2mm] (4,-0.5) to[bend left=25] (2,-0.5);

\end{tikzpicture}
\caption{$s$ paths from $y_2$ to $X$ when $s=3, m=2$ and $s'=2$. }
\end{figure}
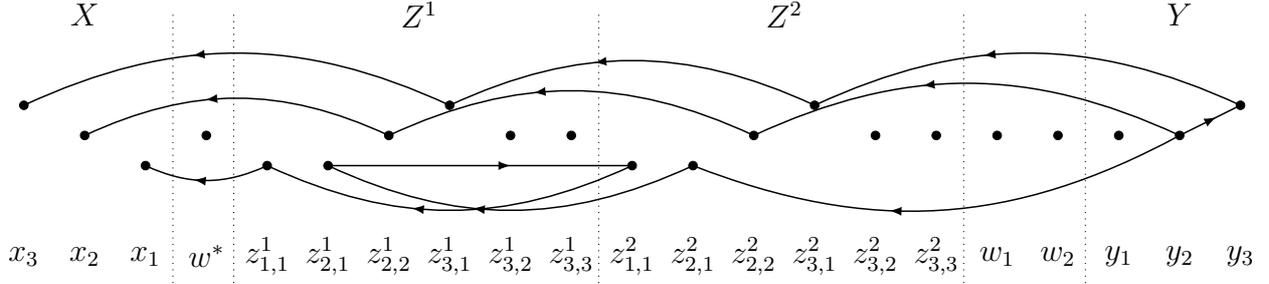

Similarly, for a vertex $v\in V(T)$, either $v\in X$ or it has at least $s$ in-neighbors in $X\cup\{w^*\}$. As $X\subseteq N^-_T(w^*)$, we conclude that $(X,v)$ is $s$-connected in $T$. For each $j\in [k]$, 
\begin{align*}
&\left\{ \big(y_j,P^{j,j},x_j\big), \big(y_{j+1},P^{j+1,1},x_{j+1},x_j\big), \dots, \big(y_{s},P^{s,1},x_{s},x_j\big)\right\}\\
 &\cup
\left\{\big(y_1,P^{1,1},z^{2}_{j,1}, z^{1}_{j,1}, x_{j}\big), \dots, \big(y_{j-1},P^{j-1,1},z^{2}_{j,j-1}, z^{1}_{j,j-1}, x_{j}\big)\right\}
\end{align*}
forms a collection of $s$ paths from $Y$ to $x_j$, where they intersect only at $x_j$. Thus $(Y,x_j)$ is $s$-connected in $T$. Together with Lemma~\ref{lem:glue}, this implies that for any $v\in V(T)$, the pair $(Y,v)$ is $s$-connected in $T$.

Hence, for any $S\subseteq V(T)$ with $|S|\leq k-1$, there exists a path 
$P^X$ from $u$ to $X$ and a path $P^Y$ from $Y$ to $v$.
Since we have $\overrightarrow{xy} \in E(T)$ for all $x\in X$ and $y\in Y$, 
$P^X \cup P^Y$ contains a path from $u$ to $v$ in $T\setminus S$.
This shows that $T$ is strongly $s$-connected.

Let $P$ be a path from a vertex $y \in Y$ to a vertex $x \in X$. For every $i\in [m]$, there is no edge from $Y\cup \bigcup_{j=i+1}^{m}Z^j$ to 
$X\cup \bigcup_{j=1}^{i-1} Z^j$ in $T$. Hence, for each $i\in [m]$, $Z^i$ intersects every path from $Y$ to $X$ in $T$. Since $Z^1,\dots, Z^m$ are pairwise vertex-disjoint, it follows that
\begin{eqnarray}\label{eqn:path}
|\Int(P)| \geq \sum_{i\in [m]} |V(P)\cap Z^i|\geq m,
\end{eqnarray}
implying that $P$ has length at least $m+1 \geq \binom{s+1}{2}^{-1}(n - 2s)$. This proves that $T$ has diameter at least $\binom{s+1}{2}^{-1}(n - 2s)$.

Let $T'$ be a strongly $k$-connected subtournament of $T$. Let $x,y $ be the minimum and maximum elements in $V(T')$ with respect to the order $<$, respectively.
Since every vertex $v\in V(T)\setminus X$ satisfies 
$$|N^-_T(v)\cap \{ v'\in V(T): v' >v\}| \leq 1,$$ 
if $x\notin X$, then $|N^-_T(x)\cap V(T')| \leq |N^-_T(x)\cap \{ v'\in V(T): v' >x\}| \leq 1$, contradicting that $T'$ is strongly $k$-connected with $k\geq 2$. Hence $x \in X$. Similarly, it follows that $y\in Y$. Since $T'$ is strongly $k$-connected, by Menger's theorem, there are $k$ internally vertex-disjoint paths $P_1,\dots, P_{k}$ from $y$ to $x$ in $T'$. By~\eqref{eqn:path}, $|\Int(P_j)| \geq m$ for every $j \in [k]$, and thus $|V(T')| \geq \sum_{j=1}^{k}|\Int(P_{j})| + |\left \{x,y \right \}| \geq km+2 \geq k \binom{s+1}{2}^{-1} n-k-2$. This finishes the proof.
\end{proof}

\section{Proof of Lemma~\ref{lem:main}}\label{sec:balanced}

\noindent {\bf Outline of the proof}. We first sketch the idea of the proof. As in \cite{kuhn2016cycle}, our proof starts with constructing robust linkage structures. However, we introduce more involved ideas and techniques in order to obtain the connectivity bound linear in $t$.

We aim to find $t$ disjoint subsets of vertices $W_1 , \dots , W_t$ satisfying all the conditions~\ref{A5}--\ref{A4} of Lemma~\ref{lem:main}. First, let us consider the following ideal scenario. Assume
$Q_1 = \dots = Q_t = \emptyset$ and that there are small in-dominating sets $A_{i,j}$ (i.e. every vertex outside $A_{i,j}$ has an out-neighbor in $A_{i,j}$) and small out-dominating sets $B_{i,j}$ (i.e. every vertex outside $B_{i,j}$ has an in-neighbor in $B_{i,j}$) for each $(i,j) \in [t] \times [k]$, so that all sets are pairwise disjoint and each $D[A_{i,j}]$ and each $D[B_{i,j}]$ contain a spanning path.
As $D$ is highly connected, provided that all these dominating sets are small enough, one can use Corollary~\ref{cor:digraph_link} to find $kt$ vertex-disjoint paths $P_{i,j}$ from the sink of the spanning path in $D[A_{i,j}]$ to the source of the spanning path in $D[B_{i,j}]$, where the path $P_{i,j}$ intersects $\bigcup_{i \in [t]}\bigcup_{j\in[k]}(A_{i,j} \cup B_{i,j})$ only at its ends.

We will later construct $W_1,\dots, W_t \subseteq V(D)$ in such a way that each set $W_i$ contains $\bigcup_{j \in [k]} (A_{i,j}\cup P_{i,j}\cup B_{i,j})$. This will guarantee the following property: for any vertex $u , v \in W_i\setminus \bigcup_{j \in [k]} (A_{i,j}\cup P_{i,j}\cup B_{i,j})$, there are $k$ edges from $u$ to $\bigcup A_{i,j}$ and $k$ edges from $\bigcup B_{i,j} $ to $v$ and these together with $k$ paths $P_{i,j}$ in $W_i$ gives $k$ internally vertex-disjoint paths from $u$ to $v$. 
This property will be later useful to ensure that each $W_i$ satisfies \ref{A2}.

However, we cannot hope for this ideal case, due to several issues. The following are some major issues complicating the proof. Figure~\ref{fig: Z} drawn at the beginning of {\bf Step~3} will be helpful to understand the structure we construct in {\bf Step~1--Step~3}.

\begin{itemize}
\item Because dominating sets may be large in general, one cannot apply Corollary~\ref{cor:digraph_link} to obtain the desired paths $P_{i,j}$ internally disjoint from the dominating sets. Instead of in/out-dominating sets, we will use Lemma~\ref{lem:dom} to construct small almost in-dominating sets $A_{i,j}$ and almost out-dominating sets $B_{i,j}$ that dominate most of the vertices outside $\bigcup_{i \in [t]}\bigcup_{j\in[k]}(A_{i,j} \cup B_{i,j})$.
Moreover, we will assign $3k$, instead of $k$, almost in/out-dominating sets to the set $W_i$ and define vertices ``exceptional'' (denoted by $E_i^S$ in {\bf Step 2}) if they are not dominated by at least $k$ of those almost dominating sets. This will enable us to control the number of ``exceptional'' vertices (see~\eqref{eq: E size}) and help us to obtain \ref{A4}. We will further elaborate on them in {\bf Step 1} and {\bf Step 2}.

\item Corollary~\ref{cor:digraph_link} does not provide any information on the length of each path. Hence some paths $P_{i,j}$ may be very long and thus $W_i$ may violate~\ref{A1}. In order to control the length of the paths, we build disjoint structures $\bigcup_{j\in [3k]} (A_{i,j}\cup P_{i,j}\cup B_{i,j})$ for each $i\in [h]$ with $h=1200k (k+\ell) tm$ instead of $t$.
This choice of $h$ ensures that many of them are small enough for~\ref{A1} (see Claim~\ref{cl: path short}). We will assign $t$ appropriate structures to the sets $W_1,\dots, W_t$ and discard the vertices in the rest structures (see Claim~\ref{lem:indep}). 

However, this may cause other problems, as these discarded vertices might not be in/out-dominated by most of the remaining dominating sets.
 To obtain \ref{A3} at the end,  we will distribute the discarded vertices to $W_1,\dots, W_t$ nicely with guaranteeing \ref{A2} and \ref{A1}. We will also further elaborate on them in {\bf Step 3}.

\item 
After {\bf Step 3}, there still could be some vertices not satisfying~\ref{A3} and not assigned to $W_i$ (the sets of these vertices are denoted by $\widetilde{F}^A$ and $\widetilde{F}^B$ in {\bf Step 4}). So we will distribute these vertices into one of the sets $W_i$. Showing that the number of such problematic vertices is small, we will be able to distribute them nicely so that both \ref{A2} and \ref{A1} still hold. We will further elaborate on them in {\bf Step 4}.
\end{itemize}

 \vspace{0.3cm}

\noindent {\bf Step 1. Construction of almost dominating sets.}
Let $D$ be a strongly $10^{8} q k^2 \ell (k+\ell)^2  tm^2 \log(m)$-connected $n$-vertex digraph with $\delta(D)\geq n-\ell$. Let $V:=V(D)$ and $h:= 1200k(k+\ell)t m$ and $c:= 31 + \lceil 5 \log (k\ell m) \rceil$.
 As $D$ is strongly $10^{8} q k^2 \ell (k+\ell)^2  tm^2 \log(m)$-connected, it follows that
\begin{align}\label{eq: n min deg}
\min\{ n, \delta^+(D), \delta^-(D)\} \geq 10^{8} q k^2 \ell (k+\ell)^2 tm^2\log(m).
\end{align}
By our assumption, $t$ pairwise disjoint subsets $Q_1 , \dots , Q_t \subseteq V$ are given and satisfy $|Q_i| \leq q$ for each $i \in [t]$. Let 
$$Q := \bigcup_{j\in [t]}Q_j, \enspace V':=V\setminus Q, \enspace D' := D \setminus Q \enspace \text{and}\enspace n' := |V(D')|.$$ Then $D'$ is strongly $9 \cdot 10^{7} q k^2 \ell (k+\ell)^2 tm^2\log(m)$-connected and
\begin{align}\label{eq: n' min deg}
\min\{ n', \delta^+(D'), \delta^-(D')\} \geq 9 \cdot 10^{7} q k^2 \ell (k+\ell)^2 tm^2\log(m).
\end{align}

First, we choose vertices of small out-degrees and small in-degrees, and then we will construct almost dominating sets $A_{i,j}$ and $B_{i,j}$ using these vertices. We plan to allocate the vertices in $A_{i,j}\cup B_{i,j}$ into $W_i$ later.
Using these vertices to construct almost dominating sets, we can  control the size of the set of `exceptional' vertices (vertices that are not dominated) in terms of degrees of vertices (see~\ref{AB3}). This will ensure that any exceptional vertex has many non-exceptional out-neighbors or  many non-exceptional in-neighbors. If we put these neighbors along with $v$ to $W_i$, then $v$ will have enough neighbors to reach (or to be reached from) non-exceptional vertices.
We construct such sets for all
$(i,j) \in [h]\times [3k]$ instead of just $(i,j)\in [t]\times [k]$ so that we have more flexibility later. For example, we can discard some sets later if necessary as $h> t$.

Let $\big\{ x_{i,j} : (i,j)\in [h]\times[3k]\big\}$ and
$\big\{ y_{i,j} : (i,j) \in [h]\times[3k]\big\}$ be two disjoint subsets of $V'$, each consisting of $3kh$ vertices with smallest out-degrees/in-degrees in 
$D'$, respectively. Since $n'\geq 6kh$ by~\eqref{eq: n' min deg}, these sets exist. Let
$$V_{\rm low}:= \big\{ x_{i,j} : (i,j)\in [h]\times[3k]\big\} \cup
\big\{ y_{i,j} : (i,j) \in [h]\times[3k]\big\}.$$
\begin{align*}
\delta_0^+:= \min_{w \in V'\setminus V_{\rm low}}d_{D'}^+(w) \enspace \text{and} \enspace \delta_0^- := \min_{w \in V'\setminus V_{\rm low}}d_{D'}^-(w).
\end{align*}
By symmetry, we may assume that 
\begin{align}\label{eq: symmetry assume}
\delta_0^+ \geq \delta_0^-.
\end{align}
The other case follows from a symmetric argument. Since $|V_{\rm low}| \leq 6kh$, by \eqref{eq: n min deg} for every $(i,j)\in [h]\times[3k]$, there are subsets $X_{i,j}$ and $Y_{i,j}$ of $V'$ satisfying the following for all $(i,j)\in [h]\times [3k]$.\vspace{0.1cm}
\begin{enumerate}[label=\text{(XY\arabic*)}]
\item \label{XY1} The sets $X_{1,1} , \dots , X_{h,3k} , Y_{1,1} , \dots , Y_{h,3k}$  are pairwise disjoint.
\item \label{XY2} $x_{i,j}\in X_{i,j}$, $y_{i,j}\in Y_{i,j}$ and $|X_{i,j}|= |Y_{i,j}| = 2k+1.$
\item \label{XY3} $x_{i,j}$ has $k$ out-neighbours and $k$ in-neighbours in $X_{i,j}$ and $y_{i,j}$ has $k$ out-neighbours and $k$ in-neighbours in $Y_{i,j}$, respectively.
\end{enumerate}\COMMENT{We have `$\geq k$' instead of `$=k$' because there could be cycles of length two in $D'$.} \vspace{0.1cm}
Let $V_{\rm low}^*:= \bigcup_{(i,j)\in [h]\times [3k]} (X_{i,j}\cup Y_{i,j})$.
We define $A_{i,j}$ and $B_{i,j}$ for each  $(i,j)\in [h]\times[3k]$ as follows.
For each $(i,j)\in [h]\times[3k]$ in lexicographic order, we repeatedly apply Lemma~\ref{lem:dom} to 
a digraph
$D^X_{i,j}:= D' \setminus \big((V^*_{\rm low}\setminus\{x_{i,j}\})\cup \bigcup_{(i',j') < (i,j)} A_{i',j'} \big)$ and the vertex $x_{i,j}$ with parameter $c$ to obtain a set $A_{i,j}$ and a vertex $a_{i,j}$ where $D'[A_{i,j}]$ has a spanning path from $x_{i,j}$ to $a_{i,j}$. Again, for $(i,j)\in [h]\times[3k]$ in lexicographic order, we repeatedly apply Lemma~\ref{lem:dom} to a digraph
$D^Y_{i,j}:=D' \setminus \big((V^*_{\rm low}\setminus\{y_{i,j}\})\cup \bigcup_{(i',j')\in [h]\times[3k]} A_{i',j'}\cup 
\bigcup_{(i',j') < (i,j) } B_{i',j'} \big)$ and the vertex $y_{i,j}$ with parameter $c$ to obtain a set $B_{i,j}$ and a vertex $b_{i,j}$ where $D'[B_{i,j}]$ has a spanning path from $b_{i,j}$ to $y_{i,j}$. 
Then we obtain pairwise disjoint sets $A_{1,1},\dots, A_{h,3k}, B_{1,1},\dots, B_{h,3k} \subseteq V'$ and 
vertices $a_{1,1},\dots, a_{h,3k},b_{1,1},\dots, b_{h,3k}$.
For each $i\in [h]$, let
$$C_i:= \bigcup_{j\in [3k]} (A_{i,j}\cup B_{i,j} \cup X_{i,j}\cup Y_{i,j}) \enspace \text{and} \enspace C:= \bigcup_{i\in [h]} C_i.$$
For each $(i,j)\in [h]\times [3k]$, let
$$E^A(i,j):=V'\setminus \left(C\cup \bigcup_{v\in A_{i,j}} N^-_{D'}(v)\right) \enspace\text{and}
 \enspace E^B(i,j):=V'\setminus \left(C\cup \bigcup_{v\in B_{i,j}} N^+_{D'}(v)\right),$$
 which are the set of vertices in $V' \setminus C$ not in/out-dominated by $A_{i,j}$ and $B_{i,j}$, respectively. Then the following statements hold for every $(i,j)\in  [h]\times[3k]$.
\begin{enumerate}[label=\text{(AB\arabic*)}]
\item \label{AB1} The sets $X_{1,1},\dots,X_{h,3k} , Y_{1,1} , \dots , Y_{h,3k} , A_{1,1} , \dots , A_{h,3k} , B_{1,1} , \dots , B_{h,3k}$ are pairwise disjoint, $1\leq |A_{i,j}|,|B_{i,j}|\leq c$ and $|C_i| \leq 250k^2 + 30k \log(km\ell)$.\footnote{
For \ref{AB1}, 
$|C_i| \leq \sum_{j\in [3k]} (|A_{i,j}|+|B_{i,j}| +|X_{i,j}| + |Y_{i,j}|) \leq 3k(62+ 2\lceil 5 \log (kml) \rceil+ 4k+2) \leq 12k^2+200k + 30k\log(km\ell) \leq  250k^2 + 30k \log(km\ell)$. 
} 

\item \label{AB2} $D'[A_{i,j}]$ contains a spanning path from $x_{i,j}$ to $a_{i,j}$ and $D'[B_{i,j}]$ contains a spanning path from $b_{i,j}$ to $y_{i,j}$.

\item \label{AB3} $\displaystyle |E^A(i,j)| \leq 2^{1-c} d_{D'}^+(x_{i,j}) + 2\ell \stackrel{\eqref{eq: n' min deg}}{\leq} \frac{\delta_0^+}{10^7 k^2 (k+\ell)^2m^2}$
and  \newline $\displaystyle |E^B(i,j)| \leq 2^{1-c} d_{D'}^-(y_{i,j})+2\ell \stackrel{\eqref{eq: n' min deg}}{\leq}   \frac{\delta_0^-}{10^7 k^2(k+\ell)^2m^2}$.
\end{enumerate}
Note that we obtain \ref{AB3} because we have $d^+_{D^X_{i,j}}(x_{i,j})\leq d_{D'}^+(x_{i,j})\leq \delta_0^+$ and $d^-_{D^Y_{i,j}}(y_{i,j})\leq d_{D'}^+(y_{i,j})\leq \delta_0^+$.
Note that 
\begin{align}\label{eq: size C}
|C| \stackrel{\ref{AB1}}{\leq}250 k^2 h + 30 kh \log (km\ell) \leq 4\cdot 10^5 k^2(k+\ell)^2 tm \log(m).
\end{align}

Later, we wish to assign vertices in $Q_i$ to some $W_j$. In order for this, we need to ensure that there are many paths from a vertex in $Q_i$ to $W_j$. To ensure this we want to prepare a set $Q'_i$ and a set $Q^*_i$ so that
there are many paths between $Q_i$ and $Q^*_i$ through $Q'_i$, and there are many paths between $Q^*_i$ and $C_j$ for many $j\in [h]$ (See Figure~\ref{fig: Z} for the case when $j=1$).
Note that the reason why we need $Q'_i$ is that the vertices in $Q_i$ might have very low in/out-degree compare to $\delta^-_0$ or $\delta^+_0$. In particular, \eqref{eq: outside F} might not hold for the vertices in $Q_i$, but it holds for the vertices in $Q'_i$. 


\begin{CLAIM}\label{claim:q'}
There exist pairwise disjoint sets $Q_1' , \dots , Q_t' \subseteq V' \setminus C$ satisfying the following for each $i \in [t]$ and $u\in Q_i\cup Q'_i$.
\begin{enumerate}[label=\text{\rm (Q$'$\arabic*)}]
\item\label{Q'0} The sets $C , Q_1 , \dots , Q_t , Q_1' , \dots , Q_t'$ are pairwise disjoint.
\item\label{Q'1} $|Q_i'| \leq 4k(k+\ell)$. 
\item\label{Q'2} Both $(u, Q_i')$ and $(Q_i',u)$ are $k$-connected in $D[Q_i \cup Q_i']$.
\end{enumerate}
\end{CLAIM}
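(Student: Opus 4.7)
For each $i\in[t]$, the strategy is to locate small ``linker'' sets $A_i,B_i\subseteq Q_i$ via Lemma~\ref{lem:sparse} applied to $D[Q_i]$, and then build $Q_i'$ by appending $k$ fresh in-neighbours to each vertex of $A_i$ and $k$ fresh out-neighbours to each vertex of $B_i$, all chosen from $V'\setminus C$. The sets $A_i$ and $B_i$ will act as a universal source and sink for $k$-connectivity inside $D[Q_i]$, and the appended neighbours will transport that connectivity across to $Q_i'$; combining the two pieces via Lemma~\ref{lem:glue} will deliver \ref{Q'2}.

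Concretely, process $i=1,\dots,t$ in turn. Since $\delta(D)\geq n-\ell$ restricts to $\delta(D[Q_i])\geq|Q_i|-\ell$, Lemma~\ref{lem:sparse} applied to $D[Q_i]$ produces $A_i,B_i\subseteq Q_i$ with $|A_i|,|B_i|\leq 2k+\ell-2$ such that $(w,B_i)$ and $(A_i,w)$ are $k$-connected in $D[Q_i]$ for every $w\in Q_i$ (this is vacuously fine when $Q_i$ is small). Next, for each $v\in A_i$ greedily pick a set $N_v^-\subseteq V'\setminus C$ of $k$ in-neighbours of $v$, and for each $v\in B_i$ similarly pick a set $N_v^+\subseteq V'\setminus C$ of $k$ out-neighbours, requiring that all chosen vertices are pairwise distinct and also disjoint from $\bigcup_{j<i}Q_j'$. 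This greedy choice is feasible since $\delta^+(D),\delta^-(D)\geq K:=10^{8}qk^2\ell(k+\ell)^2 tm^2\log m$, while the set of currently forbidden vertices has total size at most $|Q|+|C|+\sum_{j\leq i}|Q_j'|$, which by~\eqref{eq: size C} and the forthcoming bound on $|Q_i'|$ is $O(k^2(k+\ell)^2 tm\log m)\ll K$. Define $Q_i':=\bigcup_{v\in A_i}N_v^-\cup\bigcup_{v\in B_i}N_v^+$.

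Disjointness \ref{Q'0} is immediate from the greedy construction, and $|Q_i'|\leq k(|A_i|+|B_i|)\leq 2k(2k+\ell-2)\leq 4k(k+\ell)$ gives \ref{Q'1}. For \ref{Q'2} with $u\in Q_i'$, both $(u,Q_i')$ and $(Q_i',u)$ are witnessed by the length-$0$ path at $u$. For $u\in Q_i$, note that each $v\in B_i$ has at least $k$ out-neighbours inside $Q_i'$ by construction, so for any $S$ with $|S|\leq k-1$ and $v\notin S$ some out-neighbour of $v$ in $Q_i'\setminus S$ survives, giving $(v,Q_i')$ is $k$-connected in $D[\{v\}\cup Q_i']$. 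Combining this with the $k$-connectivity of $(u,B_i)$ in $D[Q_i]$ via the first bullet of Lemma~\ref{lem:glue} (with $D_1=D[Q_i]$, $D_2=D[B_i\cup Q_i']$, $U=B_i$, $W=Q_i'$) yields that $(u,Q_i')$ is $k$-connected in $D_1\cup D_2\subseteq D[Q_i\cup Q_i']$. The direction $(Q_i',u)$ is handled symmetrically using $A_i$, the appended in-neighbours, and the second bullet of Lemma~\ref{lem:glue}.

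The only real subtlety is ensuring that $A_i,B_i$ stay inside $Q_i$ so that their appended neighbours can be chosen in $V'\setminus C$ without violating $Q_i'\cap Q=\emptyset$; this forces Lemma~\ref{lem:sparse} to be applied to $D[Q_i]$ itself rather than to a larger subgraph containing the prospective $Q_i'$. All quantitative bookkeeping reduces to the comfortable inequality $K\gg|C|$, which follows from~\eqref{eq: size C}.
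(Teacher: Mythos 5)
Your proposal is correct and follows essentially the same route as the paper: apply Lemma~\ref{lem:sparse} to $D[Q_i]$ to obtain the source/sink sets (the paper's $Q_i^{\rm source},Q_i^{\rm sink}$), greedily attach $k$ fresh in-neighbours to each source vertex and $k$ fresh out-neighbours to each sink vertex from $V'\setminus C$ avoiding earlier $Q_j'$, and glue with Lemma~\ref{lem:glue}. The size and feasibility estimates match the paper's, so nothing further is needed.
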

\begin{proof}
We iteratively define pairwise disjoint sets $Q_1',\dots, Q'_t$ in order satisfying all \ref{Q'0}--\ref{Q'2}.
 Let $i \in [t]$ and assume that we have defined $Q_1' , \dots , Q_{i-1}'$ satisfying all \ref{Q'0}--\ref{Q'2}. By applying Lemma~\ref{lem:sparse} to $D[Q_i]$, there are two sets $Q^{\rm source}_i, Q^{\rm sink}_i \subseteq Q_i$ such that $|Q^{\rm source}_i|, |Q^{\rm sink}_i|\leq 2k+ \ell-2$ and 
\begin{equation}\label{eq: Qi kconn}
\begin{minipage}[c]{0.9\textwidth}\em
for every $v\in Q_i$, 
both 
$(v,Q^{\rm sink}_i)$ and $(Q^{\rm source}_i,v)$ are $k$-connected in 
$D[Q_i]$.
\end{minipage}
\end{equation}
Note that for each $u\in Q^{\rm source}_i$, we have
\begin{align*}
\big|N_D^-(u)\setminus (Q \cup C \cup \bigcup_{j\in [i-1]} Q_{j}')\big|
\stackrel{\eqref{eq: size C}, \ref{Q'1} }{\geq} \delta^{-}(D) - qt - 4\cdot 10^5 k^2(k+\ell)^2 tm \log(m) - 4kt(k+\ell) \stackrel{\eqref{eq: n min deg}}{\geq} k. 
\end{align*}
Thus for every $u \in Q^{\rm source}_i$, we can choose a set  $\widetilde{Q}_{i,u} \subseteq N_D^-(u) \setminus (Q \cup C \cup \bigcup_{j=1}^{i-1} Q'_{j})$ with $|\widetilde{Q}_{i,u}|= k$. 
Let $\widetilde{Q}_i := \bigcup_{u \in Q^{\rm source}_i}\widetilde{Q}_{i,u}$, then we have $|\widetilde{Q}_i|\leq k(2k+\ell-2)$ and 
\begin{equation}\label{eq: Qsharpi kconn}
\begin{minipage}[c]{0.9\textwidth}\em
for each $u\in Q^{\rm source}_i$, the pair
$(\widetilde{Q}_i , u )$ is $k$-connected in 
$D[Q_i\cup \widetilde{Q}_i]$.
\end{minipage}
\end{equation}
Similarly, for each $u\in Q^{\rm sink}_i$, we have
\begin{align*}
\big|N_D^+(u)\setminus (Q \cup C \cup \bigcup_{j\in [i-1]} Q_{j}')\big|
\stackrel{\eqref{eq: size C}, \ref{Q'1}}{\geq} \delta^{+}(D) - qt - 4\cdot 10^5 k^2(k+\ell)^2 tm \log(m) - 4kt(k+\ell) \stackrel{\eqref{eq: n min deg}}{\geq} k. 
\end{align*}
Thus for every $u \in Q^{\rm sink}_i$, we can choose a set  $\widehat{Q}_{i,u} \subseteq N_D^+(u) \setminus (Q \cup C \cup \bigcup_{j=1}^{i-1} Q'_{j})$ with $|\widehat{Q}_{i,u}|= k$. 
Let $\widehat{Q}_i := \bigcup_{u \in Q^{\rm sink}_i}\widehat{Q}_{i,u}$, then we have $|\widehat{Q}_i|\leq k(2k+\ell-2)$ and 
\begin{equation}\label{eq: Qsharpi kconn_2}
\begin{minipage}[c]{0.9\textwidth}\em
for each $u\in Q^{\rm sink}_i$, the pair
$(u , \widehat{Q}_i )$ is $k$-connected in 
$D[Q_i\cup \widehat{Q}_i]$.
\end{minipage}
\end{equation}
Let $Q_i' := \widetilde{Q_i} \cup \widehat{Q_i}$. 
Then $|Q'_i| \leq 2k(2k+\ell-2) \leq 4k(k+\ell)$, so we prove \ref{Q'1}.
The contruction and \ref{AB1} imply \ref{Q'0}.
 Combining Lemma~\ref{lem:glue} with~\eqref{eq: Qi kconn},~\eqref{eq: Qsharpi kconn}, and~\eqref{eq: Qsharpi kconn_2} it follows that $Q_i'$ satisfies \ref{Q'2}. This completes the proof.
\end{proof}

Let $Q' := \bigcup_{i=1}^{t}Q_i'$. Then by Claim~\ref{claim:q'} we have
\begin{align}\label{eqn:q'}
|Q'| \leq \sum_{i=1}^{t}|Q_i'| \leq 4k(k+\ell)t.
\end{align}

\noindent {\bf Step 2. Defining exceptional vertices and constructing sets $C^*_i$ and $Q^*_i$.} In this step, we define layers of the following two types of ``exceptional" vertices.

For each $i\in [h]$ and $S\in \{A,B\}$, we define
\begin{align}\label{eq: E def}
\begin{split}
E^S_i  &:=  \left\{x \in V' \setminus C \: \colon \: \big|\{ j\in [3k] \colon x \in E^S(i,j)\}\big| \geq k \right\} \text{ and }
\end{split}\\
\begin{split}\label{eq: F def}
F^S & :=\big\{x \in V' \setminus C \: \colon \: 
|\{ i\in [h] \colon x \in E^S_i\}| \geq \frac{t}{100 k(k+\ell)} \big\}.
\end{split}
\end{align}

Note that for any vertex $u \in V' \setminus (C \cup E^A_i \cup E^B_i)$, there are at least $3k - 2k \geq k$ indices $j \in [3k]$ such that there exist an edge from $u$ to $A_{i,j}$ and an edge from $B_{i,j}$ to $u$. Hence, this ensures that if $W \subseteq V(D)$ contains $C_i$ and $D[W]$ is strongly $k$-connected, then $D[W \cup \{u \}]$ is also strongly $k$-connected. For each $j \subseteq [h]$, if $U_j \subseteq V(D)$ contains $C_j$ and $D[U_j]$ is strongly $k$-connected, then for any vertex $u \in V' \setminus (C \cup F^A \cup F^B)$, the digraph $D[U_j \cup \{u \}]$ is strongly $k$-connected for all but at most $\frac{t}{100k(k+\ell)}$ choices $j \in [h]$.

These definitions yield the following upper bounds on the size of each exceptional set, which is independent of $t$. For each $i\in [h]$, we have
\begin{align}
\begin{split}\label{eq: E size}
|E^A_i| &\leq\frac{1}{k} \sum_{j=1}^{3k}|E_A(i,j)| \stackrel{\ref{AB3}}{\leq}
\frac{3\delta_0^+}{10^{7}k^2(k+\ell)^2m^2 }, \enspace \\
|E^B_i| &\leq \frac{1}{k}\sum_{j=1}^{3k}|E_B(i,j)| \stackrel{\ref{AB3}}{\leq} \frac{3\delta_0^-}{10^{7}k^2(k+\ell)^2 m^2 }, 
\end{split}\\
\begin{split}\label{eq: F size}
|F^A| &\leq \frac{100k(k+\ell)}{t}\sum_{i \in [h]}|E^A_i|\stackrel{\eqref{eq: E size}}{\leq} \frac{300k(k+\ell)h \delta_0^+}{10^{7}k^2(k+\ell)^2  m^2 t } \leq
\frac{\delta^+_0}{24 m}\leq
\frac{\delta^+_0}{30}  \enspace \text{and}  \enspace \\
|F^B| &\leq \frac{100k(k+\ell)}{t}\sum_{i \in [h]}|E^B_i|\stackrel{\eqref{eq: E size}}{\leq} \frac{300k(k+\ell)h \delta_0^-}{10^{7}k^2(k+\ell)^2  m^2 t } \leq 
 \frac{\delta^-_0}{30 } \stackrel{\eqref{eq: symmetry assume}}{\leq}  \frac{\delta^+_0}{30 }. 
\end{split}
\end{align}\COMMENT{ 
$\frac{100k(k+\ell)h \delta_0^+}{10^{6}k^2(k+\ell)^2  m^2 t }
\leq \frac{120000 \delta_0^+}{10^{6}  m^2} \leq \frac{\delta^+_0}{30 }$ as $m\geq 2$.
}

Therefore, for every vertex $v\in V'\setminus V_{\rm low}$ and $i\in [h]$, \eqref{eq: E size} with \eqref{eq: F size}, \eqref{eq: n min deg} and \eqref{eq: size C} implies that
\begin{align}\label{eq: outside F}
|N_{D'}^+(v)\setminus (C\cup F^A\cup F^B \cup E^A_i\cup E^B_i)|\geq \frac{9}{10}\delta^+_0 \enspace \text{and} \enspace
|N_{D'}^-(v)\setminus (C\cup F^B\cup E^B_i)| \geq \frac{9}{10}\delta^-_0.
\end{align}

The bounds~\eqref{eq: outside F} allow us to choose non-exceptional neighbours of a vertex $u\in V' \setminus V_{\rm low}$ which has in/out-degree at least $\delta^-_0$ and $\delta^+_0$, respectively. In {\bf Step 3}, we will be often in a position to ``distribute" many vertices into pairwise disjoint  $k$-connected sets, while preserving $k$-connectedness and not increasing the size of each too much. We will use the bound on $|F^S|$ to obtain a desired distribution. We will also use the bound on $|E_i^S|$ to ensure~\ref{A4} of Lemma~\ref{lem:main} at the end.

Let $W \subseteq V(D)$ be a set containing $C_i$. Some exceptional vertices in $(E_i^A \cup E_i^B ) \setminus W$ may not have edges to $A_{i,j}$ or edges from $B_{i,j}$ for many indices $j \in [3k]$, however, we wish to find $k$ internally vertex-disjoint paths from/to each exceptional vertex to/from $W$, respectively. In order to do this, for each exceptional vertex $v \in (E_i^A \cup E_i^B) \setminus W$, we take non-exceptional vertices $v^+_1,\dots, v^+_k \in N^+_D(v)$ and $v^-_1,\dots, v^-_k\in N^-_D(v)$ and allocate all vertices $v,v^+_1,\dots, v^+_k, v^-_1,\dots, v^-_k$ together to $W$.
Then there are $k$ internally vertex-disjoint paths from/to $v$ to/from $W$ through the vertices in $\{ v^+_1 , \dots , v^+_k, v^-_1,\dots, v^-_k \}$, respectively.


In a similar way, when we add vertices in some set $U$ into a set $W$, we will find another small set $U^*$ of vertices and put $U$ together with $U^*$ into $W$ in such a way that there are many paths between vertices in $U\cup U^*$  and non-exceptional vertices in $U^*$. We often use the following Claim~\ref{cl: extension} as a blackbox in order to build such  well connected linkages in {\bf Step 3}. 

Note that the proof of Claim~\ref{cl: extension} looks similar to the proof of Claim~\ref{claim:q'}, however, it uses slightly different arguments due to the additional constraints in~\ref{U*2} that guarantee linkages from/to non-exceptional vertices. While \ref{Q'2} trivially holds for vertices in $Q'_i$, \ref{U*2} is no longer obvious for vertices in $U_i^*$ which also belong to either $E_i^A \cup F^A$ or $E_i^B \cup F^B$.

\begin{figure}
\centering
\begin{tikzpicture}[scale=1]

  \draw[fill=none] (0,0) ellipse [x radius=3,y radius=1];
  \draw (-1,0.96) -- (-1,-0.96);
   \draw (1,0.96) -- (1,-0.96);
  \node at (0,0) {$U_i$};
  \node at (2.2,0) {$U^{\rm sink}_i$};
  \node at (-2,0) {$U^{\rm source}_i$};

\draw[middleupupuparrow={latex}, line width=0.7mm] (-1.25,0) -- (-0.75,0);
\draw[middleupupuparrow={latex}, line width=0.7mm] (0.75,0) -- (1.25,0) ;

 \draw[fill=none] (-2,-1.5) ellipse [x radius=0.9,y radius=0.65];
 \node at (-2.3,-1.5) {$\widetilde{U}_i$};
 
\draw[middleupupuparrow={latex}, line width=0.7mm] (-1.8,-1.1) -- (-1.55,-0.7);

\draw[pattern=crosshatch] (-1.4,-1.5) ellipse [x radius=0.3,y radius=0.3];
\draw[middleupuparrow={latex}, line width=0.7mm] (-1.2,-1.5) -- (-0.55,-1.5);
\draw[middleupupupuparrow={latex}, line width=0.7mm] (-1.9,-1.5) -- (-1.5,-1.5);

\draw[pattern=crosshatch] (1.3,-0.62) ellipse [x radius=0.3,y radius=0.3];
\draw[middleupuparrow={latex}, line width=0.7mm] (1.2,-0.75) -- (0.8,-1.35);

\draw[middleupupuparrow={latex}, line width=0.7mm] (1.6,-0.1) -- (1.3,-0.55);

 \draw[fill=none] (0.2,-1.5) ellipse [x radius=0.9,y radius=0.45];
 \node at (0.2,-1.5) {$\widehat{U}_i$};

\draw[pattern=crosshatch] (3,-1.7) ellipse [x radius=0.3,y radius=0.3];
 \node at (4.1,-1.7) {= $\widetilde{U}^{\rm sink}_i$};

\end{tikzpicture}
\caption{Structure of $U_i^*$ in the proof of Claim~\ref{cl: extension}.}\label{fig: *}
\end{figure}
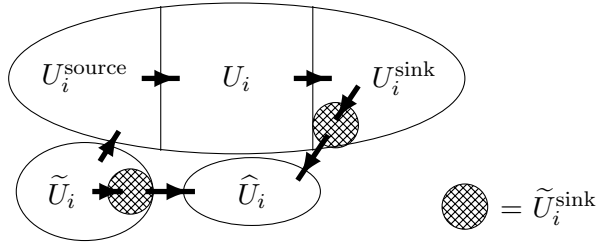

\begin{CLAIM}\label{cl: extension}
Let $U\subseteq V(D')$ with $|U| \leq 10^6 k^2\ell (k+\ell)^2 tm \log(m)$. Let $h'\in [h]$ and $U_1,\dots, U_{h'} \subseteq V(D')\setminus V_{\rm low}$.
Then there exist pairwise disjoint sets $U^*_1,\dots, U^*_{h'} \subseteq V(D')\setminus U$ such that the following hold for all $i\in [h']$ and $u\in U_i\cup U^*_i$.
\begin{enumerate}[label=\text{\rm (U$^*$\arabic*)}]
\item\label{U*1} $|U^*_i| \leq 4k(k+\ell)$.
\item\label{U*2} Both $(u,U^*_i\setminus (C\cup E^A_i\cup F^A) )$ and $(U^*_i\setminus (C\cup E^B_i\cup F^B),u )$ are $k$-connected in $D[U_i\cup U^*_i]$.
\end{enumerate}
\end{CLAIM}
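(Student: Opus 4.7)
The plan is to construct the sets $U^*_1, \ldots, U^*_{h'}$ iteratively, closely following the template of the proof of Claim~\ref{claim:q'}, but with one decisive strengthening: every vertex placed into $U^*_i$ will be chosen to lie \emph{outside} the exceptional sets $C \cup E^A_i \cup F^A \cup E^B_i \cup F^B$. Once this is arranged, verifying \ref{U*2} for $u \in U^*_i$ becomes immediate (such a $u$ already belongs to both target sets), so the entire proof collapses to a counting check.

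Fix $i$ and suppose $U^*_1, \ldots, U^*_{i-1}$ have already been chosen. Since $\delta(D) \geq n - \ell$ transfers to $\delta(D[U_i]) \geq |U_i| - \ell$, I would apply Lemma~\ref{lem:sparse} to $D[U_i]$ to produce subsets $U_i^{\rm source}, U_i^{\rm sink} \subseteq U_i$, each of size at most $2k + \ell - 2$, such that for every $v \in U_i$ both $(U_i^{\rm source}, v)$ and $(v, U_i^{\rm sink})$ are $k$-connected in $D[U_i]$. Then, processing $U_i^{\rm sink}$ one vertex at a time, for each $u \in U_i^{\rm sink}$ I pick a $k$-element set $\widetilde U_{i,u}$ of out-neighbours of $u$ lying in
\[
N_{D'}^+(u) \setminus \Bigl(C \cup F^A \cup F^B \cup E^A_i \cup E^B_i \cup U \cup \bigcup_{j<i} U^*_j \cup \widetilde U_i^{\rm prev} \cup \widehat U_i^{\rm prev}\Bigr),
\]
where $\widetilde U_i^{\rm prev}$ and $\widehat U_i^{\rm prev}$ collect the vertices already selected at stage $i$. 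The symmetric selection yields sets $\widehat U_{i,u}$ of in-neighbours for $u \in U_i^{\rm source}$, and I set $U^*_i := \bigl(\bigcup_u \widetilde U_{i,u}\bigr) \cup \bigl(\bigcup_u \widehat U_{i,u}\bigr)$.

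The main point to check is that at least $k$ admissible choices remain at every selection. Since $u \in U_i \subseteq V' \setminus V_{\rm low}$, the bound \eqref{eq: outside F} guarantees that $|N_{D'}^+(u) \setminus (C \cup F^A \cup F^B \cup E^A_i \cup E^B_i)| \geq \tfrac{9}{10}\delta_0^+$. It then suffices to see that $|U| + \sum_{j<i}|U^*_j| + |\widetilde U_i^{\rm prev}| + |\widehat U_i^{\rm prev}|$ is much smaller than this. Using the hypothesis $|U| \leq 10^{6} k^2 \ell (k+\ell)^2 tm \log m$, the inductive bound $|U^*_j| \leq 4k(k+\ell)$ across $h' \leq h = 1200 k(k+\ell) tm$ indices, and the lower bound $\delta_0^+ \geq \delta^+(D') \geq 9 \cdot 10^{7} q k^2 \ell (k+\ell)^2 tm^2 \log m$ coming from \eqref{eq: n' min deg}, a routine estimate (using $q \geq 1$ and $m \geq 2$) confirms that the total forbidden count is far below $\tfrac{9}{10}\delta_0^+$, so at least $k$ vertices survive. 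The bound $|U^*_i| \leq 2k(2k + \ell - 2) \leq 4k(k+\ell)$ immediately gives \ref{U*1}, and pairwise disjointness together with $U^*_i \cap U = \emptyset$ is baked into the definition of the selection set.

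To finish with \ref{U*2}: by construction, $U^*_i$ is disjoint from $C \cup E^A_i \cup F^A \cup E^B_i \cup F^B$, so for any $u \in U^*_i$ the vertex itself lies in both target sets and both required $k$-connectivities hold trivially via a length-$0$ path. For $u \in U_i$, Lemma~\ref{lem:sparse} gives $(u, U_i^{\rm sink})$ $k$-connected in $D[U_i]$; for each $v \in U_i^{\rm sink}$, the $k$ direct edges $v \to \widetilde U_{i,v}$ make $(v, \widetilde U_{i,v})$ $k$-connected in $D$; and Lemma~\ref{lem:glue} then combines these into $k$-connectedness of $(u, \widetilde U_i)$ in $D[U_i \cup U^*_i]$. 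Since $\widetilde U_i \subseteq U^*_i \setminus (C \cup E^A_i \cup F^A)$, the first half of \ref{U*2} follows, and the second half is symmetric via $U_i^{\rm source}$ and $\widehat U_i$. The main (indeed only) obstacle in the argument is the bookkeeping in the counting step above; once that is secured, the rest is essentially the same idea as in Claim~\ref{claim:q'}.
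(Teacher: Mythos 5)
There is a genuine gap, and it sits exactly at the point you dismiss as ``the symmetric selection.'' Your plan is to put \emph{every} vertex of $U^*_i$ outside $C\cup E^A_i\cup F^A\cup E^B_i\cup F^B$, which requires choosing, for each $u\in U^{\rm source}_i$, $k$ \emph{in}-neighbours of $u$ avoiding (in particular) $F^A\cup E^A_i$. But the situation is not symmetric: after the normalisation \eqref{eq: symmetry assume} the sets $F^A$ and $E^A_i$ are only bounded in terms of $\delta_0^+$ (by \eqref{eq: E size} and \eqref{eq: F size}, $|F^A|$ can be as large as $\delta_0^+/30$), whereas the in-degree of $u$ is only guaranteed to be at least $\delta_0^-$, which may be of the order of the connectivity bound while $\delta_0^+$ is close to $n$. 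Thus $N^-_{D'}(u)$ may be entirely contained in $F^A$, and no admissible in-neighbour exists. This is precisely why \eqref{eq: outside F} is stated asymmetrically: it lets you delete $C\cup F^A\cup F^B\cup E^A_i\cup E^B_i$ from an \emph{out}-neighbourhood but only $C\cup F^B\cup E^B_i$ from an \emph{in}-neighbourhood. The paper also warns of this explicitly just before the claim: unlike in Claim~\ref{claim:q'}, the sets $U^*_i$ \emph{will} contain vertices of $E^A_i\cup F^A$ (namely, among the in-neighbours $\widetilde U_i$), so \ref{U*2} cannot be made to ``hold trivially via a length-$0$ path'' for all of $U^*_i$.

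The actual proof repairs this with a two-layer construction that your proposal omits. The in-neighbour sets $\widetilde U_{i,u}$, $u\in U^{\rm source}_i$, are required to avoid only $U\cup C\cup E^B_i\cup F^B$ (all controlled by $\delta_0^-$), so they exist; this already gives the second half of \ref{U*2}. To recover the first half for the possibly $A$-exceptional vertices of $\widetilde U_i$, one applies Lemma~\ref{lem:sparse} a second time, to $D[\widetilde U_i\cup U^{\rm sink}_i]$, obtaining a small set $\widetilde U^{\rm sink}_i$ reachable from every vertex of $\widetilde U_i\cup U^{\rm sink}_i$, and then chooses \emph{out}-neighbours $\widehat U_{i,u}$ of each $u\in\widetilde U^{\rm sink}_i$ avoiding all of $C\cup E^A_i\cup F^A\cup E^B_i\cup F^B$ (legitimate, since out-degrees are at least $\delta_0^+$ and \eqref{eq: outside F} applies). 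Lemma~\ref{lem:glue} then chains $U_i\to U^{\rm sink}_i\to\widetilde U^{\rm sink}_i\to\widehat U_i$ to give the first half of \ref{U*2}. Your counting for the out-neighbour selections is fine, but without this second layer the claim as stated is not established.
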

\begin{proof}
We iteratively construct pairwise disjoint sets $U_1^* , \dots , U_{h'}^* \subseteq V(D') \setminus U$ satisfying the claim. Let us assume that for some $i\in [h']$ we have constructed pairwise disjoint sets $U^*_1,\dots, U^*_{i-1}\subseteq V'\setminus U$ satisfying the following.
\begin{enumerate}
\item[(U1)$^{i-1}$] For each $j\in [i-1]$, we have $|U^*_j| \leq 4k(k+\ell)$.
\item[(U2)$^{i-1}$] For each $j\in [i-1]$ and $u\in U_j\cup U^*_j$, both $(u,U^*_j \setminus (C\cup E^A_j\cup F^A) )$ and $(U^*_j \setminus (C\cup E^B_j\cup F^B),u )$ are $k$-connected in $D[U_j\cup U^*_j]$.
\end{enumerate}

Note that the empty collection satisfies (U1)$^{0}$ and (U2)$^{0}$. 
See Figure~\ref{fig: *}. We want to build sets $\widetilde{U}_i$ and $\widehat{U}_i$ such that $(\widetilde{U}_i,u)$ and $(u,\widehat{U}_i)$ are both $k$-connected in $D[U^*_i]$ for all $u\in U^*_i$.

By applying Lemma~\ref{lem:sparse} to $D[U_i]$, there are two sets $U^{\rm source}_i, U^{\rm sink}_i \subseteq U_i$ such that $|U^{\rm source}_i|, |U^{\rm sink}_i|\leq 2k+ \ell-2$ and 
\begin{equation}\label{eq: Ci kconn}
\begin{minipage}[c]{0.9\textwidth}\em
for every $v\in U_i$, 
both 
$(v,U^{\rm sink}_i)$ and $(U^{\rm source}_i,v)$ are $k$-connected in 
$D[U_i]$.
\end{minipage}
\end{equation}
\COMMENT{If $U_i=\emptyset$, then $U^{\rm source}_i = U^{\rm sink}_i=\emptyset$ vacuously satisfies \eqref{eq: Ci kconn}.}
Note that for each $u\in U^{\rm source}_i$, we have
\begin{align*}
\big|N_{D'}^-(u)\setminus (U\cup C\cup E^B_i\cup F^B \cup \bigcup_{j\in [i-1]} U^*_{j})\big|
\stackrel{\substack{\eqref{eq: outside F},\eqref{eq: size C},\\ \text{(U1)$^{i-1}$}} }{\geq} \frac{9}{10}\delta^-_0- 2\cdot 10^6 k^2\ell(k+\ell)^2 tm \log(m)\stackrel{\eqref{eq: n' min deg}}{\geq} k.  
\end{align*}
Thus for every $u \in U^{\rm source}_i$, we can choose a set  $\widetilde{U}_{i,u} \subseteq N_{D'}^-(u) \setminus (U \cup C \cup E^B_i\cup F^B \cup \bigcup_{j=1}^{i-1} U^*_{j})$ with $|\widetilde{U}_{i,u}|= k$. 
Let $\widetilde{U}_i := \bigcup_{u \in U^{\rm source}_i}\widetilde{U}_{i,u}$, then we have $|\widetilde{U}_i|\leq k(2k+\ell-2)$ and 
\begin{equation}\label{eq: Csharpi kconn}
\begin{minipage}[c]{0.9\textwidth}\em
for each $u\in U^{\rm source}_i$, the pair
$(\widetilde{U}_i\setminus (C\cup E^B_i\cup F^B),u )$ is $k$-connected in 
$D[U_i\cup \widetilde{U}_i]$.
\end{minipage}
\end{equation}

Similarly, applying Lemma~\ref{lem:sparse} to $D[\widetilde{U}_i\cup U^{\rm sink}_i]$, there exists a set $\widetilde{U}^{\rm sink}_i \subseteq \widetilde{U}_i\cup U^{\rm sink}_i$ such that
$|\widetilde{U}^{\rm sink}_i|\leq 2k+ \ell-2$ and 
\begin{equation}\label{eq: tilde Ci kconn}
\begin{minipage}[c]{0.9\textwidth}\em
for each $v\in \widetilde{U}_i\cup U^{\rm sink}_i$, the pair
$(v,\widetilde{U}^{\rm sink}_i)$ is $k$-connected in 
$D[\widetilde{U}_i\cup U^{\rm sink}_i]$.
\end{minipage}
\end{equation}
Note that, again for each $u\in \widetilde{U}^{\rm sink}_i$, we have
\begin{align*}
\big|N_{D'}^+(u)\setminus (U\cup C\cup \bigcup_{S\in \{A,B\}} (E^S_i\cup F^S) \cup \bigcup_{j\in [i-1]} U^*_{j})\big|
\stackrel{\substack{\eqref{eq: outside F},\eqref{eq: size C},\\ \text{(U1)$^{i-1}$}} }{\geq} \frac{9}{10} \delta^+_0 - 10^7 k^2\ell(k+\ell)^2 tm \log(m) \stackrel{\eqref{eq: n' min deg}}{\geq} k. 
\end{align*}
Thus for every $u\in \widetilde{U}^{\rm sink}_i$, we can choose a set $\widehat{U}_{i,u} \subseteq N_{D'}^+(u)\setminus (U\cup C\cup \bigcup_{S\in \{A,B\}} (E^S_i\cup F^S)  \cup \bigcup_{j\in [i-1]} U^*_{j})$ with $|\widehat{U}_{i,u}|= k$. 
Let $\widehat{U}_i := \bigcup_{u \in \widetilde{U}^{\rm sink}_i} \widehat{U}_{i,u}$,  then we have $|\widehat{U}_i|\leq k(2k+\ell-2)$ and 
\begin{equation}\label{eq: Csharpi kconn 2}
\begin{minipage}[c]{0.9\textwidth}\em
for each $u\in  \widetilde{U}^{\rm sink}_i$, the pair
$(v, \widehat{U}_i\setminus (C\cup E^A_i\cup E^B_i\cup F^A\cup  F^B) )$ is $k$-connected in  $D[U_i\cup  \widetilde{U}_i\cup \widehat{U}_i]$.
\end{minipage}
\end{equation}
Let $U^*_i:= \widetilde{U}_i\cup \widehat{U}_i$, then $|U^*_i| \leq 2k(2k+\ell-2)\leq 4k(k+\ell)$, thus (U1)$^{i}$ follows, and $U^*_i$ is disjoint from the sets $U, U^*_1,\dots, U^*_{i-1}$ from its construction.

Now we prove that $U_i^*$ satisfies (U2)$^i$. 
Note that $U^*_i \cap (C\cup E^B_i\cup F^B) = \emptyset$.
Thus, for $v\in U^*_i$, it is clear that $(U^*_i\setminus (C\cup E^B_i\cup F^B),v )$ is $k$-connected in $D[U_i\cup U^*_i]$.
If $v\in U_i$, then Lemma~\ref{lem:glue} with \eqref{eq: Ci kconn} and \eqref{eq: Csharpi kconn} implies that $(U^*_i\setminus ( C\cup E^B_i\cup F^B),v )$ is $k$-connected in $D[U_i\cup U^*_i]$. 

Similarly, if $v\in \widehat{U}_i \subseteq U^*_i\setminus (C\cup E^A_i\cup E^B_i\cup F^A\cup F^B)$, then it is clear that $(v, U^*_i\setminus(C\cup E^A_i\cup E^B_i\cup F^A\cup F^B))$ is  $k$-connected in $D[U_i\cup U^*_i]$.
If $v\in \widetilde{U}^{\rm sink}_i$, then \eqref{eq: Csharpi kconn 2} implies that $(v, U^*_i\setminus(C\cup E^A_i\cup E^B_i\cup F^A\cup F^B))$ is $k$-connected in $D[U_i\cup U^*_i]$.
If $v\in  \widetilde{U}_i\cup U^{\rm sink}_i$, then Lemma~\ref{lem:glue}  with \eqref{eq: tilde Ci kconn} and \eqref{eq: Csharpi kconn 2} implies that $(v, U^*_i\setminus (C\cup E^A_i\cup E^B_i\cup F^A\cup F^B) )$ is $k$-connected in $D[U_i\cup U^*_i]$.
If $v\in U_i$, then Lemma~\ref{lem:glue}  with \eqref{eq: Ci kconn}
implies $(v, U^*_i\setminus (C\cup E^A_i\cup E^B_i\cup F^A\cup F^B) )$ is $k$-connected in $D[U_i\cup U^*_i]$. Thus $U^*_i$ satisfies (U2)$^{i}$.
By repeating this, we obtain desired pairwise disjoint sets $U^*_1,\dots, U^*_h$ satisfying (U1)$^{h'}$ and (U2)$^{h'}$, thus \ref{U*1} and \ref{U*2}. This proves the claim.
\end{proof}
 \vspace{0.3cm}

\noindent {\bf Step 3. Construction of sets $W'_1,\dots, W'_t$.} 
In this step, we construct sets $W'_1,\dots, W'_{t}$ satisfying the following \ref{W'0}--\ref{W'3}. Each $W'_i$ together with some more vertices will give the desired set $W_i$ later.

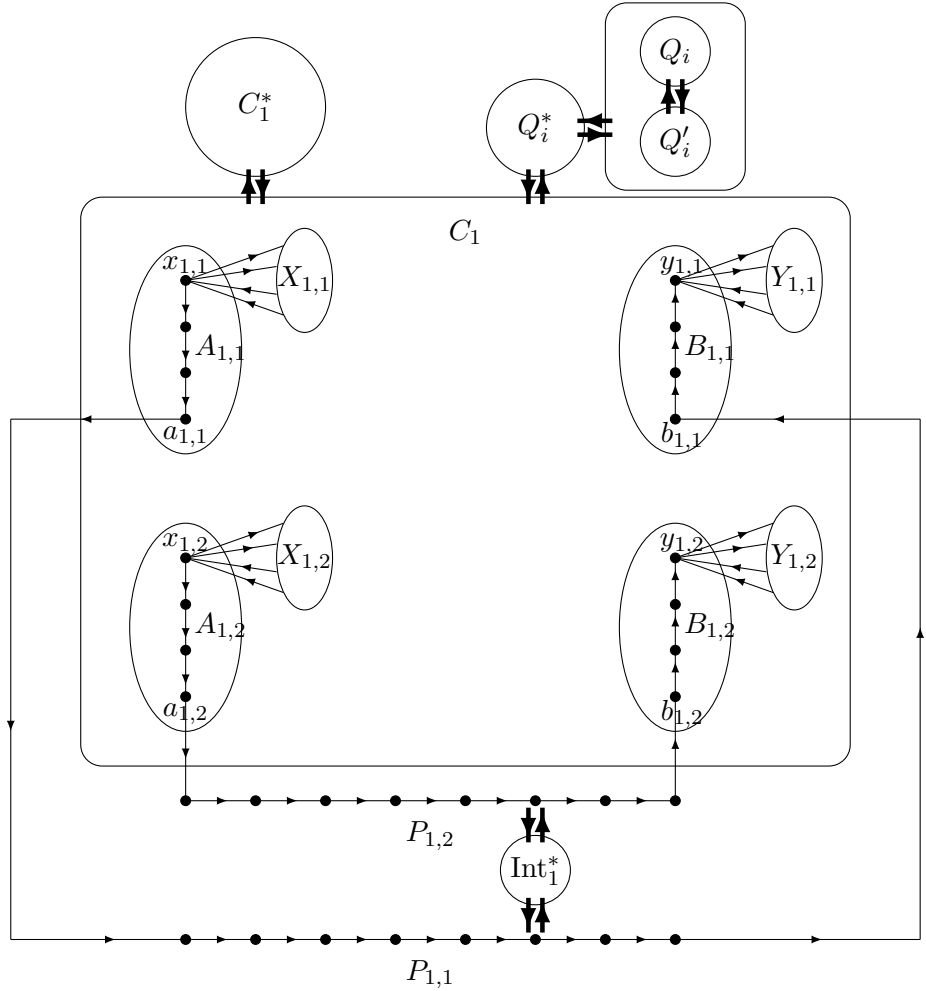
\begin{figure}
\centering
\begin{tikzpicture}[scale=0.92]

  \draw[fill=none, rounded corners=8] (-0.5,-6) rectangle (10.5,2.2);

\node at (5,1.7) {$C_1$};

\draw[middleupuparrow={latex}, line width=0.6mm] (1.9,2.1) -- (1.9,2.1+0.5) ;
\draw[middleupuparrow={latex}, line width=0.6mm] (2.1,2.1+0.5) -- (2.1,2.1) ;

  \draw[fill=none] (2,3.5) ellipse [x radius=1,y radius=1];
  \node at (2,3.5) {$C_1^*$};

\draw[middleupuparrow={latex}, line width=0.6mm] (5.9,2.1+0.5) -- (5.9,2.1) ;
\draw[middleupuparrow={latex}, line width=0.6mm] (6.1,2.1) -- (6.1,2.1+0.5) ;

  \draw[fill=none] (6,2.7+0.5) ellipse [x radius=0.7,y radius=0.7];
  \node at (6,2.7+0.5) {$Q_i^*$};

\draw[middleupuparrow={latex}, line width=0.6mm] (7.1,3.3) -- (6.6,3.3) ;
\draw[middleupuparrow={latex}, line width=0.6mm] (6.6,3.1) -- (7.1,3.1) ;

  \draw[fill=none, rounded corners=8] (7,2.3) rectangle (9,5);

  \draw[fill=none] (8,3) ellipse [x radius=0.5,y radius=0.5];
  \node at (8,3) {$Q'_i$};

  \draw[fill=none] (8,4.3) ellipse [x radius=0.5,y radius=0.5];
  \node at (8,4.3) {$Q_i$};

\draw[middleupuparrow={latex}, line width=0.6mm] (7.9,3.4) -- (7.9,3.9) ;
\draw[middleupuparrow={latex}, line width=0.6mm] (8.1,3.9) -- (8.1,3.4) ;

  \draw[fill=none] (6,-7.5) ellipse [x radius=0.5,y radius=0.5];
  \node at (6,-7.5) {$\Int_1^*$};

\draw[middleupuparrow={latex}, line width=0.6mm] (6.1,-7.1) -- (6.1,-6.6) ;
\draw[middleupuparrow={latex}, line width=0.6mm] (5.9,-6.6) -- (5.9,-7.1);
  
\draw[middleupuparrow={latex}, line width=0.6mm] (5.9,-7.9) -- (5.9,-8.4) ;
\draw[middleupuparrow={latex}, line width=0.6mm] (6.1,-8.4) -- (6.1,-7.9) ;

  \draw[fill=none] (8,0) ellipse [x radius=0.8,y radius=1.5];

\node at (8.5,0) {$B_{1,1}$};

  \draw[fill=none] (8,-4) ellipse [x radius=0.8,y radius=1.5];

\node at (8.5,-4) {$B_{1,2}$};

 \draw[fill=none] (1,0) ellipse [x radius=0.8,y radius=1.5];
\node at (1.5,0) {$A_{1,1}$};
 \draw[fill=none] (1,-4) ellipse [x radius=0.8,y radius=1.5];
\node at (1.5,-4) {$A_{1,2}$};

\filldraw[fill=black] (1,1) circle (2pt);
\filldraw[fill=black] (1,0.33) circle (2pt);
\filldraw[fill=black] (1,-0.33) circle (2pt);
\filldraw[fill=black] (1,-1) circle (2pt);

\draw[middleuparrow={latex}] (1,1) -- (1,0.33) ;
\draw[middleuparrow={latex}] (1,0.33) -- (1,-0.33) ;
\draw[middleuparrow={latex}] (1,-0.33) -- (1,-1) ;

\node at (1,-1.25) {$a_{1,1}$};
\node at (1,1.2) {$x_{1,1}$};

 \draw[fill=none] (2.7,1) ellipse [x radius=0.4,y radius=0.75];
 \node at (2.7,1) {$X_{1,1}$};
\draw[middleuparrow={latex}] (1,1) -- (2.4,1.5);
\draw[middleuparrow={latex}] (1,1) -- (2.3,1.2);
\draw[middledownarrow={latex}] (2.3,0.8) -- (1,1);
\draw[middledownarrow={latex}] (2.4,0.5) -- (1,1);

\filldraw[fill=black] (1,-3) circle (2pt);
\filldraw[fill=black] (1,-3.67) circle (2pt);
\filldraw[fill=black] (1,-4.33) circle (2pt);
\filldraw[fill=black] (1,-5) circle (2pt);

\draw[middleuparrow={latex}] (1,-3) -- (1,-3.67) ;
\draw[middleuparrow={latex}] (1,-3.67) -- (1,-4.33) ;
\draw[middleuparrow={latex}] (1,-4.33) -- (1,-5) ;

\node at (1,-5.25) {$a_{1,2}$};
\node at (1,-2.8) {$x_{1,2}$};

 \draw[fill=none] (2.7,-3) ellipse [x radius=0.4,y radius=0.75];
 \node at (2.7,-3) {$X_{1,2}$};
\draw[middleuparrow={latex}] (1,-3) -- (2.4,-2.5);
\draw[middleuparrow={latex}] (1,-3) -- (2.3,-2.8);
\draw[middledownarrow={latex}] (2.3,-3.2) -- (1,-3);
\draw[middledownarrow={latex}] (2.4,-3.5) -- (1,-3);

\filldraw[fill=black] (8,1) circle (2pt);
\filldraw[fill=black] (8,0.33) circle (2pt);
\filldraw[fill=black] (8,-0.33) circle (2pt);
\filldraw[fill=black] (8,-1) circle (2pt);

\draw[middleuparrow={latex}] (8,0.33) -- (8,1);
\draw[middleuparrow={latex}] (8,-0.33) -- (8,0.33) ;
\draw[middleuparrow={latex}] (8,-1) -- (8,-0.33) ;

\node at (8.1,-1.25) {$b_{1,1}$};
\node at (8.1,1.2) {$y_{1,1}$};

 \draw[fill=none] (9.7,1) ellipse [x radius=0.4,y radius=0.75];
 \node at (9.7,1) {$Y_{1,1}$};
\draw[middleuparrow={latex}] (8,1) -- (9.4,1.5);
\draw[middleuparrow={latex}] (8,1) -- (9.3,1.2);
\draw[middledownarrow={latex}] (9.3,0.8) -- (8,1);
\draw[middledownarrow={latex}] (9.4,0.5) -- (8,1);

\filldraw[fill=black] (8,-3) circle (2pt);
\filldraw[fill=black] (8,-3.67) circle (2pt);
\filldraw[fill=black] (8,-4.33) circle (2pt);
\filldraw[fill=black] (8,-5) circle (2pt);

\draw[middleuparrow={latex}] (8,-3.67) -- (8,-3);
\draw[middleuparrow={latex}] (8,-4.33) -- (8,-3.67) ;
\draw[middleuparrow={latex}] (8,-5) -- (8,-4.33) ;

\node at (8.1,-5.25) {$b_{1,2}$};
\node at (8.1,-2.8) {$y_{1,2}$};

 \draw[fill=none] (9.7,-3) ellipse [x radius=0.4,y radius=0.75];
 \node at (9.7,-3) {$Y_{1,2}$};
\draw[middleuparrow={latex}] (8,-3) -- (9.4,-2.5);
\draw[middleuparrow={latex}] (8,-3) -- (9.3,-2.8);
\draw[middledownarrow={latex}] (9.3,-3.2) -- (8,-3);
\draw[middledownarrow={latex}] (9.4,-3.5) -- (8,-3);

\filldraw[fill=black] (1,-8.5) circle (2pt);
\filldraw[fill=black] (2,-8.5) circle (2pt);
\filldraw[fill=black] (3,-8.5) circle (2pt);
\filldraw[fill=black] (4,-8.5) circle (2pt);
\filldraw[fill=black] (5,-8.5) circle (2pt);
\filldraw[fill=black] (6,-8.5) circle (2pt);
\filldraw[fill=black] (7,-8.5) circle (2pt);
\filldraw[fill=black] (8,-8.5) circle (2pt);

\draw[middlearrow={latex}] (1,-1) -- (-1.5,-1);
\draw[middlearrow={latex}] (-1.5,-1) -- (-1.5,-8.5);

\draw[middlearrow={latex}] (-1.5,-8.5) -- (1,-8.5);
\draw[middlearrow={latex}] (1,-8.5) -- (2,-8.5);
\draw[middlearrow={latex}] (2,-8.5) -- (3,-8.5);
\draw[middlearrow={latex}] (3,-8.5) -- (4,-8.5);
\draw[middlearrow={latex}] (4,-8.5) -- (5,-8.5);
\draw[middlearrow={latex}] (5,-8.5) -- (6,-8.5);
\draw[middlearrow={latex}] (6,-8.5) -- (7,-8.5);

\draw[middlearrow={latex}] (7,-8.5) -- (8,-8.5);

\draw[middlearrow={latex}] (8,-8.5) -- (11.5,-8.5);

\draw[middlearrow={latex}] (11.5,-1) -- (8,-1);
\draw[middlearrow={latex}] (11.5,-8.5) -- (11.5,-1);

\node at (4.5,-7) {{\bf $P_{1,2}$}};

\node at (4.5,-9) {{\bf $P_{1,1}$}};

\filldraw[fill=black] (1,-6.5) circle (2pt);
\filldraw[fill=black] (2,-6.5) circle (2pt);
\filldraw[fill=black] (3,-6.5) circle (2pt);
\filldraw[fill=black] (4,-6.5) circle (2pt);
\filldraw[fill=black] (5,-6.5) circle (2pt);
\filldraw[fill=black] (6,-6.5) circle (2pt);
\filldraw[fill=black] (7,-6.5) circle (2pt);
\filldraw[fill=black] (8,-6.5) circle (2pt);

\draw[middlearrow={latex}] (1,-5) -- (1,-6.5);
\draw[middlearrow={latex}] (1,-6.5) -- (2,-6.5);
\draw[middlearrow={latex}] (2,-6.5) -- (3,-6.5);
\draw[middlearrow={latex}] (3,-6.5) -- (4,-6.5);
\draw[middlearrow={latex}] (4,-6.5) -- (5,-6.5);
\draw[middlearrow={latex}] (5,-6.5) -- (6,-6.5);
\draw[middlearrow={latex}] (6,-6.5) -- (7,-6.5);
\draw[middlearrow={latex}] (7,-6.5) -- (8,-6.5);
\draw[middlearrow={latex}] (8,-6.5) -- (8,-5);

\end{tikzpicture}
\caption{Structure of $Z_1$.}\label{fig: Z}
\end{figure}

\begin{PROP}\label{prop: W'}
After permuting the indices in $[h]$ (hence the sets $C_1,\dots, C_{h}$), there exist pairwise disjoint sets $W'_1,\dots, W'_{t}\subseteq V$ of vertices and pairwise vertex-disjoint paths 
$P_{1,1},\dots, P_{t,3k}$ that satisfy the following for each $i\in [t]$, where $W'=\bigcup_{i\in [t]} W'_i$.
\begin{enumerate}[label=\text{\rm (W$'$\arabic*)}]
\item\label{W'0} $Q_i \cup C_{i} \cup \bigcup_{j\in [3k]} \Int(P_{i,j}) \subseteq W'_i$ and $C\subseteq W'$.
\item \label{W'1}$3k +\ell \leq |W'_i| \leq \frac{n}{180mt}$.
\item \label{W'2} For each $v\in W'_i$, both $(v, V'\setminus(C\cup E^A_i))$ and $(V'\setminus (C \cup E^B_i),v )$ are $k$-connected in $D[W'_i]$.
\item \label{W'3}For each $j\in [3k]$, the path $P_{i,j}$ starts at $a_{i,j}$ and ends at $b_{i,j}$.
\end{enumerate}
\end{PROP}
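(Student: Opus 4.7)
The plan is to assemble each $W_i'$ around the corresponding $C_i$, attach small linkage structures to it, and realize the paths $P_{i,j}$ using the high linkedness of $D$. First, apply Claim~\ref{cl: extension} with $U = Q \cup Q'$ and $U_i = C_i \setminus V_{\rm low}$ for $i \in [h]$ to obtain pairwise disjoint dominator sets $C_i^* \subseteq V(D') \setminus (C \cup Q \cup Q')$ of size at most $4k(k+\ell)$, such that every vertex of $C_i \cup C_i^*$ is $k$-linked in both directions inside $D[C_i \cup C_i^*]$ to good vertices of $C_i^* \setminus (C \cup E_i^A \cup F^A)$ and $C_i^* \setminus (C \cup E_i^B \cup F^B)$. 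Apply Claim~\ref{cl: extension} a second time with $U_i = Q_i'$ for $i \in [t]$ and $U$ enlarged to include everything built so far, producing $Q_i^*$ that is linked to $Q_i \cup Q_i'$ through $Q_i'$ (using \ref{Q'2} and Lemma~\ref{lem:glue}) and, as in the first application, reaches $k$ non-exceptional targets.

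Second, let $R$ be $V(D)$ with $Q \cup Q' \cup \bigcup_i (C_i^* \cup Q_i^*) \cup (C \setminus \{a_{i,j}, b_{i,j}\}_{(i,j) \in [h] \times [3k]})$ removed. Since this deleted set has size $O(k^2(k+\ell)^2 tm \log m)$, the induced digraph remains strongly $(452(3kh + t) + 188\ell)$-connected, hence $(3kh + t)$-linked by Corollary~\ref{cor:digraph_link}. Use this to obtain $3kh$ pairwise internally vertex-disjoint paths $P_{i,j}$ from $a_{i,j}$ to $b_{i,j}$ together with $t$ further internally disjoint paths, one from each $Q_i^*$ to an entry vertex of some $C_{\sigma(i)}^*$, for an injection $\sigma:[t] \to [h]$. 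Since $\sum_{i,j} |\Int(P_{i,j})| \le n$, Markov's inequality shows that at most $180mt$ indices $i$ satisfy $\sum_j |\Int(P_{i,j})| > n/(360mt)$; as $h - 180mt \geq t$, choose $\sigma$ with image lying entirely in these ``short'' indices, permute $[h]$ so that $\sigma$ is the identity on $[t]$, discard the paths $P_{j,k}$ for $j > t$, and redistribute the vertices of $\bigcup_{j > t} C_j$ evenly across $W_1', \dots, W_t'$, invoking Claim~\ref{cl: extension} one more time to wrap each discarded $C_j$ with a small linkage gadget that preserves the connectivity target.

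Define $W_i'$ to be the union of $C_i$, $C_i^*$, $Q_i$, $Q_i'$, $Q_i^*$, the internal vertices of $P_{i,j}$ for $j \in [3k]$, the auxiliary path from $Q_i^*$ to $C_i^*$, and $W_i'$'s share of the distributed vertices, padded if necessary using interiors of discarded paths to force $|W_i'| \ge 3k + \ell$. Then \ref{W'0} and \ref{W'3} are immediate, and \ref{W'1} follows from the averaging step (upper bound) together with the padding (lower bound). The main obstacle is \ref{W'2}: for each $v \in W_i'$ we must produce $k$ internally vertex-disjoint paths in $D[W_i']$ from $v$ to $V' \setminus (C \cup E_i^A)$ (and the symmetric backward version). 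The plan is to chain three links via Lemma~\ref{lem:glue}: first, move from $v$ into the core $C_i \cup C_i^*$ through either the path $P_{i,j}$ containing $v$, or $Q_i^*$ together with its attachment path to $C_i^*$, or the distribution gadget, according to where $v$ lies; second, traverse $k$ internally disjoint paths inside $C_i \cup C_i^*$ given by the first application of Claim~\ref{cl: extension}; third, exit to $V' \setminus (C \cup E_i^A)$ via the path system, exploiting that $|E_i^A|$ is small by~\eqref{eq: E size} so that at least $k$ of the $3k$ paths $P_{i,j}$ contain interior vertices in $V' \setminus (C \cup E_i^A)$, supplying $k$ distinct good endpoints. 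The delicate point is keeping the composed paths internally vertex-disjoint while simultaneously forcing their endpoints into the non-exceptional target; this relies essentially on having all $3k$ parallel channels available in $W_i'$, not just $k$.
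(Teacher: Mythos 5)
Your overall architecture matches the paper's (gadgets from Claim~\ref{cl: extension} around $C_i$ and $Q_i'$, linkedness to build the paths $P_{i,j}$, an averaging step to keep only indices with short structures, redistribution of the discarded $C_j$'s), but the execution of \ref{W'2} has a genuine gap at its central point. The condition \ref{W'2} demands that $(v,V'\setminus(C\cup E^A_i))$ survive the deletion of any $k-1$ vertices of $D[W'_i]$, and your first ``link'' of the chain routes $v$ into the core $C_i\cup C^*_i$ through a \emph{single} channel: the one path $P_{i,j}$ containing $v$, or the one attachment path from $Q^*_i$ to $C^*_{\sigma(i)}$. A single path is cut by deleting one internal vertex, so this gives $1$-connectedness, not $k$-connectedness. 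The paper avoids this by never using the paths for \ref{W'2} at all: it wraps the path interiors $\Int_i=\bigcup_j\Int(P_{i,j})$ in their own gadgets $\Int^*_i$ (Claim~\ref{cl: Int*}), so that every interior vertex has $k$ robust routes, via its many neighbours in the near-complete digraph, directly to non-exceptional vertices of $\Int^*_i\setminus(C\cup E^A_i\cup F^A)\subseteq V'\setminus(C\cup E^A_i)$; this in turn forces the independent-set argument of Claim~\ref{lem:indep}, since $\Int^*_i$ may meet other paths. Your proposal omits the $\Int^*_i$ gadgets entirely, and without them \ref{W'2} fails for every path-interior vertex. Your third link has the same flavour of problem: ``at least $k$ of the $3k$ paths contain interior vertices in $V'\setminus(C\cup E^A_i)$'' is not justified (nothing prevents a path's interior from lying in $E^A_i$), and is not needed once the target is reached inside a gadget.

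A second, related gap is that the pairing of $Q^{\#}_i$ (and of the discarded $C_a\cup C^*_a$) with a retained index must be done combinatorially, not just by length or ``evenly.'' The guarantee \ref{U*2} only places $Q^*_i\setminus F^A$ outside $E^A_{i'}$ for \emph{most} $i'$, so one must choose the bijection $\phi$ (the paper's \ref{I2}) and the assignment $j(a)$ (Claim~\ref{cl: J}) so that $Q^*_i\setminus F^A$ avoids $E^A_{\phi(i)}$ and $C^*_a\setminus F^S$ avoids $E^S_{j(a)}$; otherwise the $k$ vertices you reach inside the gadget need not lie in the target set $V'\setminus(C\cup E^A_i)$ for the index $i$ they end up in. Your physical path from $Q^*_i$ to $C^*_{\sigma(i)}$ does not substitute for this (again a single channel), and ``redistribute evenly'' plus ``pad with interiors of discarded paths'' ignores it; the paper instead pads with a set $W''_i$ chosen explicitly outside $E^A_i\cup E^B_i$ and all the structures. (A minor arithmetic slip: with total interior size at most $n$, Markov at threshold $n/(360mt)$ gives at most $360mt$ bad indices, not $180mt$; the paper's Claim~\ref{cl: path short} keeps the smallest $h/6$ structures instead.)
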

\begin{proof}

To prove this proposition, we will first construct pairwise disjoint vertex sets $Z_1,\dots, Z_t$ which satisfy \ref{W'2}, \ref{W'3} and the first part of \ref{W'0}. For this, we will construct sets $C_i, C^*_i, Q^{*}_i, \Int^*_i$ and paths $P_{i,j}$ for all $i\in [h]$ and $j\in [3k]$. After that, we will discard some of these sets and we will permute the indices in $[t]$ so that the remaining sets are $Z_1,\dots, Z_t$ containing $C_1,\dots, C_t$, respectively.  Once we obtain such sets, we will distribute more vertices into appropriate sets $Z_i$ to obtain a desired set $W'_i$ that also satisfies~\ref{W'0} and~\ref{W'1}.  Figure~\ref{fig: Z} shows how we will construct $Z_i$.

First, we construct disjoint sets $C^*_1,\dots, C^*_h$. The below property \ref{C*2} ensures that every exceptional vertex in $C_i\cup C^*_i$ can reach to $V'\setminus E^A_i$ and can be reached from $V'\setminus E^B_i$ even after deleting at most $k-1$ vertices.
Moreover, as we plan to later discard $C_i\cup C^*_i$ for some $i\in [h]$, we wish to be able to reassign them to $Z_j$ for many appropriate choices of $j\in [h]$ to satisfy $C \subseteq W'$, without making each set $Z_j$ too large. Note that \ref{C*2} also ensures that every vertex can reach to $V'\setminus F^A$ and can be reached from $V'\setminus F^B$ even after deleting at most $k-1$ vertices, and the vertices $v$ outside $F^A$ or $F^B$ have many `good' choices of $Z_{j}$ to ensure~\ref{W'2}, allowing us to reassign the vertices in $C_i\cup C^*_i$ together to some $Z_j$.

\begin{CLAIM}\label{claim:c*}
There exist pairwise disjoint sets $C^*_1,\dots, C^*_h \subseteq V'\setminus (Q' \cup C)$ satisfying the following. For all $i\in [h]$ and $v\in C_i\cup C^*_i$,
\begin{enumerate}[label=\text{\rm (C$^*$\arabic*)}]
\item \label{C*0} The sets $C , C^*_1 , \dots , C^*_h , Q_1 , \dots , Q_t , Q_1' , \dots , Q_t'$ are pairwise disjoint.
\item\label{C*1} $|C^*_i| \leq 4k(k+\ell)$.
\item\label{C*2} Both $(v,C^*_i\setminus (E^A_i\cup F^A) )$ and $(C^*_i\setminus (E^B_i\cup F^B),v )$ are $k$-connected in $D[C_i\cup C^*_i]$.
\end{enumerate}
\end{CLAIM}
\begin{proof}
We apply Claim~\ref{cl: extension} with $C_i\setminus V_{\rm low}$ and $Q' \cup C$ playing the roles of $U_i$ and $U$, respectively, which is possible by~\eqref{eq: size C} and~\eqref{eqn:q'}. Then we obtain sets $C^*_1,\dots, C^*_h$ that satisfy~\ref{C*0} and \ref{C*1}, and \ref{C*2} holds for all vertices $v\in (C_i\cup C^*_i)\setminus V_{\rm low}$.

For $u\in C_i\cap V_{\rm low}$, it follows that $(v,C_i\setminus V_{\rm low})$ and $(C_i\setminus V_{\rm low},v)$ are both $k$-connected in $D[C_i]$ by~\ref{XY3}. Hence Lemma~\ref{lem:glue} implies~\ref{C*2}, which proves the claim.
\end{proof}
Let $C^*:=\bigcup_{i\in [h]} C^*_i$. By~\eqref{eq: size C} with~\ref{C*1}, we have
\begin{align}\label{eq: size C C*}
|C\cup C^*| \leq 5 \cdot 10^5 k^2(k+ \ell)^2 tm\log(m).
\end{align}

Using the similar argument as the proof of Claim~\ref{claim:c*}, we can prepare sets $Q^*_i$ as the following claim. These sets will ensure that for each $i\in [t]$, there are many choices of $j$ such that we can assign vertices in $Q_i\cup Q'_i\cup Q^*_i$ together into $W_j$.
\begin{CLAIM}\label{claim:q*}
There exist pairwise disjoint sets $Q^*_1,\dots, Q^*_t \subseteq V'\setminus (Q \cup Q' \cup C\cup C^*)$ satisfying the following. For all $i\in [t]$ and $v\in Q_i\cup Q_i' \cup Q^*_i$,
\begin{enumerate}[label=\text{\rm (Q$^*$\arabic*)}]
\item\label{Q*0} The sets $C , C^* , Q_1^{\#} , \dots , Q_t^{\#}$ are pairwise disjoint, where $Q_i^{\#} := Q_i \cup Q_i' \cup Q_i^*$ for each $i \in [t]$.
\item\label{Q*1} $|Q^*_i| \leq 4k(k+\ell)$.
\item\label{Q*2} Both $(v,Q^*_i\setminus F^A )$ and $(Q^*_i\setminus F^B,v )$ are $k$-connected in $D[Q_i\cup Q_i' \cup Q^*_i]$.
\end{enumerate}
\end{CLAIM}
\begin{proof}
By~\eqref{eqn:q'} and \eqref{eq: size C C*}, for each $i \in [t]$ we can apply Claim~\ref{cl: extension} with $Q_i'$ and $Q \cup Q' \cup C\cup C^*$ playing the roles of $U_i$ and $U$ respectively. This gives $t$ disjoint sets $Q_1^* , \dots , Q_t^*$ satisfying \ref{Q*0} and \ref{Q*1}. Moreover, by \ref{Q'2} and \ref{U*2}, we can deduce that $Q_1^* , \dots , Q_t^*$ satisfy \ref{Q*2}.
\end{proof}

Recall that $Q^{\#}_i = Q_i\cup Q'_i \cup Q^*_i$ for each $i \in [t]$. Let $Q^{\#} := \bigcup_{i=1}^{t} Q^{\#}_i$. 
By~\eqref{eqn:q'} and~\ref{Q*1}, for each $i\in [t]$, we have
\begin{align}\label{eq: size Q Q' Q*}
|Q^{\#}_i| \leq q+ 8k(k+\ell) \text{ and } |Q^{\#}| \leq qt + 8k(k+\ell)t.
\end{align}

Now we will construct paths $P_{i,j}$ from $a_{i,j}$ to $b_{i,j}$.
In order to satisfy~\ref{A1}, we ensure that the length of each $P_{i,j}$ is not too long; the factor of $m$ in the definition of $h$ is for this. After constructing $3kh$ paths, we will choose short ones and discard the rest of them. However, discarding some paths and dominating sets might cause some issues which we will handle later.

Since $D'$ is $9 \cdot 10^7 q k^2 \ell(k+\ell)^2 tm^2\log(m)$-connected and
$9 \cdot 10^{7} q k^2 \ell(k+\ell)^2tm^2\log(m) - |Q^{\#} \cup C \cup C^*| \geq 452\cdot 3 k h+ 188\ell$, the subgraph 
$D' \setminus ((Q^{\#} \cup C\cup C^*)\setminus \{a_{i,j},b_{i,j} : (i,j)\in [h]\times [3k]\})$ is strongly $(452\cdot 3k h + 188 \ell)$-connected, and thus $3kh$-linked by  Corollary~\ref{cor:digraph_link}.

Hence there is a collection $\{P_{i,j}: (i,j)\in [h]\times [3k]\}$ of vertex-disjoint paths in $D' \setminus ((Q^{\#} \cup C\cup C^*)\setminus \{a_{i,j},b_{i,j} : (i,j)\in [h]\times [3k]\})$ such that 
\begin{enumerate}[label=\text{\rm (P\arabic*)}]
\item\label{P1} $P_{i,j}$ is a path from $a_{i,j}$ to $b_{i,j}$, and
\item\label{P2} The sets $C , C^* , Q^{\#} , \Int_1 , \dots , \Int_h$ are pairwise disjoint, where $\Int_{i}:=\bigcup_{j=1}^{3k} \Int(P_{i,j})$ for each $i\in [h]$.
\end{enumerate}
By permuting indices on $[h]$ if necessary, we may assume that for $i\in [h/6]$ and $i' \in [h]\setminus [h/6]$, we have
$|C_{i}\cup C^*_{i}\cup \Int_{i}| \leq \ |C_{i'}\cup C^*_{i'}\cup \Int_{i'}|$.

\begin{CLAIM}\label{cl: path short}
For each $i\in [h/6]$, we have
$|C_i\cup C^*_i\cup \Int_i|\leq \frac{n'}{1000mt }$ 
\end{CLAIM}
\begin{proof}
For $i\neq j\in [h]$, the set $C_i\cup C^*_i\cup \Int_i$ is disjoint from $C_j\cup C^*_j\cup \Int_j$.
If the claim is not true, then we have 
$$|V'| \geq \sum_{i\in [h]\setminus [h/6]} |C_i\cup C^*_i\cup \Int_i|
> \frac{5 h n'}{6000mt } \geq n',$$
a contradiction. This proves the claim.
\end{proof}
As some $P_{i,j}$ may contain exceptional vertices, we need to `take care of' such vertices. We will construct small sets $\Int^*_i$ so that $\Int_i$ can be assigned to $W'_i$ as long as we assign it together with $\Int_i^*$.


\begin{CLAIM}\label{cl: Int*}
There exist pairwise disjoint sets $\Int^*_1,\dots, \Int^*_{h/6} \subseteq V'\setminus (Q^{\#} \cup C\cup C^*)$ such that the following holds for all $i\in [h/6]$ and $u\in \Int_i\cup \Int^*_i$.
\begin{enumerate}[label=\text{\rm (Int$^*$\arabic*)}]
\item \label{Int*0} The sets $C , C^* , Q^{\#} , \Int^*_1 , \dots , \Int^*_{h/6}$ are pairwise disjoint.
\item\label{Int*1} $|\Int^*_i| \leq 4k(k+\ell)$.
\item\label{Int*2} Both $(u,\Int^*_i\setminus (C\cup E^A_i\cup F^A) )$ and $(\Int^*_i \setminus (C\cup E^B_i\cup F^B),u )$ are $k$-connected in $D[\Int_i\cup \Int^*_i]$.
\end{enumerate}
\end{CLAIM}
\begin{proof}
For each $i\in [h/6]$, we have $\Int_i\cap V_{\rm low} \subseteq \Int_i\cap C = \emptyset$. By this with \eqref{eq: size C C*} and \eqref{eq: size Q Q' Q*}, we can apply Claim~\ref{cl: extension} with $\Int_i$ and $Q^{\#} \cup C\cup C^*$ playing the role of $U_i$ and $U$, respectively.
Then we obtain sets $\Int^*_1,\dots, \Int^*_{h/6}$ satisfying all \ref{Int*0}, \ref{Int*1} and \ref{Int*2}.
\end{proof}
Note that the condition~\ref{Int*1} holds no matter how big $|V(P_{i,j})|$ is.
For each $i\in [h/6]$, let 
$$Z'_i:= C_i\cup C^*_{i} \cup \Int_i \cup \Int^*_{i}.$$
 Although the set $\Int^*_i$ takes care of the exceptional vertices in $\Int_i$ by~\ref{Int*2}, the set $\Int^*_i$  may intersect with another path $P_{i',j'}$, hence $\{ Z'_i \}_{i \in [h/6]}$ does not necessarily consist of pairwise disjoint sets. We utilise the following Claim~\ref{lem:indep} to keep only pairwise disjoint sets $Z'_i$.

\begin{CLAIM}\label{lem:indep}
There is a set $I \subseteq [h/6]$ with $|I|=t$ satisfying the following.
\begin{enumerate}[label=\text{\rm (I\arabic*)}]
\item\label{I1} For all $i\neq j \in I$, we have
$Z'_i \cap Z'_{j}=\emptyset$.

\item\label{I2} There is a bijective function $\phi : [t] \to I$ such that for each $i \in [t]$ and all $v \in Q_i^* \setminus F^A$ and $u \in Q_i^* \setminus F^B$, we have $v \notin E_{\phi(i)}^A$ and $u \notin E_{\phi(i)}^B$. 
\end{enumerate}
\end{CLAIM}
\begin{proof}
Let $D^{\rm aux}$ be a digraph on a vertex-set $[h/6]$ such that 
$\overrightarrow{ij} \in E(D^{\rm aux})$ if $\Int^*_i\cap \Int_j\neq \emptyset$ and
$G^{\rm aux}$ be a graph on a vertex-set $[h/6]$ obtained from $D^{\rm aux}$ by removing orientation of the edges and by removing parallel edges. 
By \ref{P2} and \ref{Int*0}, $ij \notin E(G^{\rm aux})$ implies $Z'_i\cap Z'_j=\emptyset$.

As $\Int_1,\dots, \Int_{h/6}$ are pairwise disjoint, it is easy to see that for each $i\in [h/6]$, we have $d^+_{D^{\rm aux}}(i) \leq |\Int^*_i|$.
Thus 
$$|E(G^{\rm aux})| \leq |E(D^{\rm aux})| \leq \sum_{i\in [h/6]} d^+_{D^{\rm aux}}(i) \stackrel{\ref{Int*1}}{\leq } \frac{4k(k+\ell)h}{6}.$$

By Tur\'an's theorem, any $n$-vertex graph with average degree $d$ has an independent set of size at lease $n/(d+1)$. Hence there exists an independent set $I_0 \subseteq [h/6]$ of $G^{\rm aux}$ with size $2t \leq \frac{h}{6\cdot 8k(k+\ell)+1}$. 

Now it is easy to see that every subset of $I_0$ satisfies~\ref{I1} as $I$ is an independent set in $G^{\rm aux}$, and thus it is enough to show that there is a subset $I \subseteq I_0$ with $|I|=t$ and $\phi : [t]\rightarrow I$ satisfying~\ref{I2}. For each $i \in [t]$, since 
\begin{eqnarray*}
\sum_{S\in \{A,B\}}\Big| \hspace{-0.1cm} \bigcup_{v\in Q^*_i \setminus F^S} \{ j \in [t] : v \in E^{S}_j \}\Big|
&\stackrel{\eqref{eq: F def}}{\leq} & \hspace{-0.1cm} 
\big(|Q^*_i \setminus F^A| 
+ |Q^*_i \setminus F^B| \big) \frac{t}{100k(k+\ell)} 
\stackrel{\ref{Q*1}}{\leq } \hspace{-0.1cm}  \frac{4t}{25},
\end{eqnarray*}
there exists a set $J_i \subseteq [2t]$ with $|J_i| \geq 2t - \frac{4t}{25} > t$, where for all $v \in Q_i^* \setminus F^A$ and $u \in Q_i^* \setminus F^B$, we have $v \notin E_j^A$ and $u \notin E_j^B$ for each $j \in J_i$. Hence we can greedily choose $\phi(i) \in J_i$ for each $i \in [t]$ so that $\phi$ is injective. Let us define $I := \phi([t])$, which satisfies~\ref{I2}. This proves the claim.
\end{proof}

By permuting indices on $[h/6]$, we assume that $\phi(i) = i$ for each $i \in [t]$, and thus $I = [t]$. Note that, by \ref{Int*0}, for all $i\in [h]\setminus [t]$ and $j\in [t]$, the vertices in $C_i\cup C^*_i$ do not intersect with $Z'_j$. 
For each $i\in [t]$, let 
$$Z_i:= Z'_i\cup Q^{\#}_i,$$
then $Z_1,\dots, Z_t$ are pairwise disjoint and Claim~\ref{cl: path short},\ref{Int*1} and \eqref{eq: size Q Q' Q*} imply that,
for each $i\in [t]$,
\begin{align}\label{eq: Z size}
|Z_i| \leq \frac{n'}{500mt}.
\end{align}

Now we want to add some more vertices to each $Z_i$ to get the desired set $W_i$. However, some discarded vertices in $C_i\cup C^*_i$ for $i\in [h]\setminus[t]$ might be dangerous to guarantee~\ref{A3}, because~\eqref{eq: E def} and~\eqref{eq: F def} only consider vertices in $V' \setminus C$ and thus we have no information of the vertices in $C$ on whether they are in/out-dominated by almost dominating sets.

Therefore, we need to assign the vertices in these sets to one of $Z_j$ for $j\in [t]$. This issue can be handled by using the property \ref{C*2} as follows. Note that if $W'_i$ is too small, then it is difficult to control the non-edges between two vertices. Hence we will add a set $W''_i$ of a few vertices into $W'_i$ to obtain the lower bound in \ref{W'1}.

\begin{CLAIM}\label{cl: J}
For each $a\in [h]\setminus [t]$, we can choose $j(a)\in [t]$ satisfying the following. 
\begin{enumerate}[label=\text{\rm (j\arabic*)}]
\item\label{j1} $C^*_a \setminus F^A$ and $E^{A}_{j(a)}$ are disjoint, and
$C^*_a\setminus F^B$ and $ E^{B}_{j(a)}$ are disjoint.

\item\label{j2} For each $i \in [t]$, we have $|J_i|\leq \lceil\frac{h}{23t/25}\rceil \leq \frac{ 6h}{5t}$, where $J_{i}:= \{ i'\in [h]\setminus [t]: j(i')= i \}.$
\end{enumerate}
\end{CLAIM}
\begin{proof}
For each $a \in [h]\setminus [t]$, 
we have
\begin{eqnarray*}
\sum_{S\in \{A,B\}}\Big| \hspace{-0.1cm} \bigcup_{v\in C^*_{a} \setminus F^S} \{ i \in [t] : v \in E^{S}_i \}\Big|
&\stackrel{\eqref{eq: F def}}{\leq} & \hspace{-0.1cm} 
\big(|C^*_{a} \setminus F^A| 
+ |C^*_{a} \setminus F^B| \big) \frac{t}{100k(k+\ell)} 
\stackrel{\ref{C*1}}{\leq } \hspace{-0.1cm}  \frac{2t}{25}.
\end{eqnarray*}
Thus, for every $a\in [h]\setminus [t]$, there are at least $\frac{23t}{25}$ indices $i\in [t]$ such that $C^*_{a}\setminus F^S$ and $E^S_{i}$ are disjoint for each $S\in \{A,B\}$. For each $a\in [t]\setminus [t]$, we can choose $j(a) \in [t]$  among those $\frac{23t}{25}$ choices in a way that \ref{j2} holds. Moreover, this choice guarantees \ref{j1}.
\end{proof}

By Claim~\ref{cl: path short} and \ref{Int*1}, 
for every $i\in [t]$,
$$ \Big|V\setminus \Big(Q^{\#} \cup E^A_i\cup E^B_i\cup \bigcup_{i'\in [t]} (\Int_{i'}\cup \Int^*_{i'})\cup (C\cup C^*)\Big) \Big| \stackrel{\eqref{eq: size Q Q' Q*},\eqref{eq: E size},\eqref{eq: n' min deg}}{\geq } t\ell.$$
Therefore, for every $i\in [t]$, we may choose a set $W''_i$ of $\ell$ vertices in 
$V\setminus \Big(Q^{\#} \cup E^A_i\cup E^B_i\cup \bigcup_{i'\in [t]} (\Int_{i'}\cup \Int^*_{i'})\cup (C\cup C^*)\Big)$
such that $W''_i\cap W''_{j}=\emptyset$ for all $i\neq j\in [t]$. For every $i\in [t]$, let
$$W'_i := W''_i\cup Z_i \cup \bigcup_{i'\in J_i} (C_{i'}\cup C^*_{i'}).$$

Now we aim to show $W'_i$ satisfies~\ref{W'0}--\ref{W'3} for each $i \in [t]$.
First, \ref{W'0} is obvious from the construction and the fact that $J_1,\dots, J_t$ partitions $[h]\setminus [t]$.
Also \ref{P2} and~\ref{I1} imply that $W'_1,\dots, W'_t$ are pairwise disjoint and \ref{P2} with the definition of $P_{i,j}$ imply that $P_{1,1},\dots, P_{t,3k}$ are pairwise disjoint paths satisfying \ref{W'3}.

Now we show~\ref{W'2}. For each $v\in Q_i^*\setminus F^A$, since $v\notin C$, \ref{I2} implies that 
$v \in V'\setminus (C\cup E^A_i)$, thus $(v, V'\setminus (C\cup E^A_i))$ is $k$-connected in $D[W'_i]$.
For each $v \in Q^{\#}_i$, \ref{Q*2} with Lemma~\ref{lem:glue} implies that $(v,V'\setminus (C\cup E_i^A))$ is $k$-connected in $D[W'_i]$. 

Similarly, for all $i'\in J_i$ and $v\in C^*_{i'} \setminus F^A$, since $v\notin C$,  \ref{j1} implies that $v\in V'\setminus(C\cup E_i^A)$, thus $(v, V'\setminus(C\cup E_i^A))$ is $k$-connected in $D[W'_i]$. Since this holds for all $i'\in J_i$ and $v\in C^*_{i'}\setminus F^A$,
Lemma~\ref{lem:glue} together with~\ref{C*2} and \ref{Int*2} implies that for any $v\in W'_i\setminus W''_i$, the ordered pair $(v, V'\setminus(C\cup E^A_i))$ is $k$-connected in $D[W'_i]$. For each $v\in W''_i$, since $v \in V' \setminus (C \cup E_i^A)$, it is clear that $(v,V'\setminus(C\cup E^A_i))$ is $k$-connected in $D[W'_i]$.
Similarly, we can show that for any $v\in W'_i$, the ordered pair $(V'\setminus(C\cup E^B_i) ,v)$ is $k$-connected in $D[W'_i]$, hence \ref{W'2} holds.

Finally, we verify~\ref{W'1}. For every $i\in [t]$, we have
\begin{eqnarray}\label{eq: W'i size}
|W'_i| &\leq& \sum_{i'\in J_i} |C_{i'}\cup C^*_{i'}| + |Z_i| + |W''_i| \nonumber \\
&\stackrel{\substack{\ref{AB1},\ref{C*1}} }{\leq}& |J_i|\big(250k^2 + 30k \log(km\ell) + 4k(k+\ell)\big) + |Z_i| +  \ell \nonumber \\
&\stackrel{\ref{j2},\eqref{eq: Z size}}{\leq}& \frac{h(310k(k+\ell) + 36k\log(km\ell) )}{t} + \frac{n}{500mt} + \ell \stackrel{\eqref{eq: n min deg}}{\leq} \frac{n}{180mt},
\end{eqnarray}\COMMENT{The last place that we need $m^2\log{m}$-term in the connectivity.
We can instead assume $n\geq (...)m^2\log{m}$ while connectivity is at least 
$(...)m\log{m}$-connected.}
where $3k + \ell \leq |W'_i|$ comes from \eqref{eq: n' min deg} and the fact that $|W''_i|=\ell$, hence \ref{W'1} holds. This completes the proof of the proposition.

\end{proof}

Using Proposition~\ref{prop: W'}, we re-order the indices in $[h]$ and 
find sets $W'_1,\dots, W'_t$ and paths $P_{1,1},\dots, P_{t,3k}$ satisfying \ref{W'0}--\ref{W'3}. Let
$W':= \bigcup_{i\in [t]} W'_i$, then we have $|W'| \stackrel{\ref{W'1}}{\leq} \frac{n}{180m}.$
\vspace{0.3cm}

\noindent {\bf Step 4. Distribution of updated exceptional vertices.}

In this step, we aim to distribute exceptional vertices not satisfying \ref{A3} into $W'_1 , \dots , W'_t$, without increasing each size not too much. The resulting pairwise disjoint sets will be the final sets $W_1 , \dots , W_t$ of Lemma~\ref{lem:main}.

From the construction, $W'_i$ may contain many vertices other than the vertices in $A_{i,j}\cup B_{i,j}\cup V(P_{i,j})$;  some exceptional vertices in $E^{A}_i\cup E^{B}_i$ may have many in-neighbors and out-neighbors in $W'_i$, and thus become non-exceptional at this stage. Moreover, note that we defined $F^{A}, F^{B}$  based on the partition over $[h]$ and as we discard many parts from the linkage structures and only preserve linkage structures $A_{i,j}\cup B_{i,j}\cup P_{i,j}\subseteq W'_i$ for $i\in [t]$, this changes the meaning of `exceptional'. Hence we consider the ``updated'' sets $\widetilde{E}^A_i, \widetilde{E}^B_i, \widetilde{F}^A$ and $\widetilde{F}^B$ of exceptional vertices. Specific structures inside $W_i$ will not be important any more in this step.

One of the main difficulties is that for a vertex $v\in \widetilde{F}^B$, its in-degree provided by \ref{AB3} can be much smaller than $|\widetilde{F}^A\cup \widetilde{F}^B|$, as $\delta^-_0$ may be significantly smaller than $\delta^+_0$; in this case, we may not be able to find non-exceptional in-neighbors of $v$ which we will assign together with $v$. To handle this problem, we first distribute vertices in $\widetilde{F}^A$, and then we only define $\widetilde{F}^B$ after the distribution of $\widetilde{F}^A$. \newline 

\noindent {\bf Step 4.1. Distribution of vertices in $\widetilde{F}^A$.}
We define updated sets of exceptional vertices. For every  $i\in [t]$, let 
\begin{align}
\begin{split}\label{eq: tilde E def}
\widetilde{E}^A_i&:= \{ x \in V\setminus W' : |N^+_D(x)\cap W'_i| < k \}, \\
\widetilde{E}^A_0 &:= \{x\in V\setminus W': |N^+_D(x)\setminus W'| < \frac{\delta_0^+}{4} \},  \\
\widetilde{F}^A &:= \{ x \in V\setminus W': \big|\{ i'\in [t]: x\in \widetilde{E}^A_{i'}\}\big| > \frac{t}{50m^{1/2}k} \}.
\end{split}
\end{align}

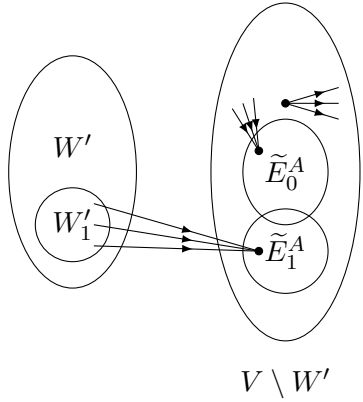
\begin{figure}
\centering
\begin{tikzpicture}[scale=0.7]

  \draw[fill=none] (0,0) ellipse [x radius=1.2,y radius=2.2];
  \node at (0,0.5) {$W'$};

  \draw[fill=none] (0,-1) ellipse [x radius=0.7,y radius=0.7];
  \node at (0,-1) {$W'_1$};

  \draw[fill=none] (4,0) ellipse [x radius=1.4,y radius=3.2];
  \node at (4,-4) {$V\setminus W'$};

  \draw[fill=none] (4,-1.5) ellipse [x radius=0.8,y radius=0.8];
    \draw[fill=none] (4,0) ellipse [x radius=0.8,y radius=1];
  \node at (4,0) {$\widetilde{E}^A_0$};
  
    \node at (4,-1.5) {$\widetilde{E}^A_1$};

\filldraw[fill=black] (3.5,-1.5) circle (2pt);

\draw[middlearrow={latex}] (0.4,-1) -- (3.5,-1.5) ;
\draw[middlearrow={latex}] (0.4,-0.6) -- (3.5,-1.5) ;
\draw[middlearrow={latex}] (0.4,-1.4) -- (3.5,-1.5) ;

\filldraw[fill=black] (3.5,0.4) circle (2pt);


\draw[middlearrow={latex}] (3,1.2) -- (3.5,0.4) ;
\draw[middlearrow={latex}] (3.2 ,1.3) -- (3.5,0.4);
\draw[middlearrow={latex}] (3.4, 1.4) -- (3.5,0.4);

\filldraw[fill=black] (4,1.3) circle (2pt);

\draw[middleuparrow={latex}] (4,1.3) -- (5,1) ;
\draw[middleuparrow={latex}] (4,1.3) -- (5,1.3) ;
\draw[middleuparrow={latex}] (4,1.3) -- (5,1.6) ;

\end{tikzpicture}
\caption{$\widetilde{E}^A_0$, $\widetilde{E}^A_1$ and in/out-neighbours guaranteed at a vertex in each set.}\label{fig: EA}
\end{figure}

For a vertex $v\in \widetilde{E}_i^A$, it has at most $k-1$ out-neighbors in $W'_i$ and thus by~\ref{W'1}, there are at least $k$ in-neighbors of $v$ in $W'_i$. 
We can find many in-neighbours in $V\setminus W'$ of a vertex in $\widetilde{E}_0^A$ and we can find many out-neighbors in $V\setminus W'$ of a vertex in $(V\setminus W')\setminus \widetilde{E}_0^A$. See Figure~\ref{fig: EA}.


One of the issues is that we do not wish to assign too many vertices to a fixed $W'_i$ so that we can guarantee~\ref{A1}. In order for this, we will first define a set $I_A(x)$ of the indices $i\in [t]$ such that $x$ can be assigned to $W_i$ provided that some set $N^A(x)$ of additional vertices are assigned together with $x$. We will then choose $i(x) \in I_A(x)$ in the way that, for each $i\in [t]$, not too many vertices $x$ satisfies $i(x)=i$. We will further choose a set $N^A(x)$ of $k$ vertices and assign vertices in $N^A(x)\cup \{x\}$ together to $W_{i(x)}$.

As the definition of $E^A(i,j)$ with \eqref{eq: E def} implies 
$\widetilde{E}^A_i \subseteq E^A_i$, we have
\begin{align}
\begin{split}\label{eq: wide tilde F size}
|\widetilde{F}^A| &\leq \frac{50m^{1/2}k}{t}\sum_{i \in [t]}|\widetilde{E}^A_i|\stackrel{\eqref{eq: E size}}{\leq} \frac{50 m^{1/2} k t \delta_0^+}{10^{6}k^2(k+\ell)^2  m^2 t } \leq
\frac{\delta^+_0}{2 \cdot 10^4 k^3 m^{3/2} }.
\end{split}
\end{align}
For each $x\in \widetilde{F}^A$, we define
\begin{align}\label{eq: I(x) def}
I_A(x):= \left\{\begin{array}{ll}
\{i\in [t] : |N^-_D(x)\cap W'_i|\geq k\} & \text{if } x\in \widetilde{F}^A\setminus \widetilde{E}^A_0, \\
\{i\in [t] :|N^+_D(x)\cap W'_i|\geq k\} & \text{if } x\in \widetilde{F}^A\cap \widetilde{E}^A_0.
\end{array}\right.
\end{align}

Note that for each $i\in [t]$, if $x\in \widetilde{E}^A_i$, then \ref{W'1} with \eqref{eq: tilde E def} implies that $| N^-_D(x)\cap W'_i| \geq k$.
Thus, for each $x\in  \widetilde{F}^A\setminus \widetilde{E}^A_0$, the set $I_A(x)$ contains all $i \in [t]$ with $x \in \widetilde{E}^A_i$, so we have
\begin{align}\label{eq: IA size 1}
|I_A(x)| \geq \big|\{i'\in [t]: x\in \widetilde{E}_{i'}^A\}\big| \stackrel{\eqref{eq: tilde E def}}{\geq } \frac{t}{50 m^{1/2} k}.
\end{align}
For each $x\in \widetilde{F}^A\cap \widetilde{E}^A_0$, we have 
$$\delta_0^+ \leq d_D^+(x)\leq |N_D^+(x)\setminus W'| + \sum_{i\in [t]\setminus I_A(x) } k + \sum_{i\in I_A(x)} |W'_i| \stackrel{\eqref{eq: tilde E def},\ref{W'1}}{\leq }\delta_0^+/4 + kt + \frac{|I_A(x)|n}{180mt}.$$
Thus by \eqref{eq: n min deg}, for each $x\in \widetilde{F}^A\cap \widetilde{E}^A_0$ we have
\begin{align}\label{eq: IA size 2}
|I_A(x)| \geq \frac{100 \delta_0^+ mt}{n}.
\end{align}
Thus for each $x\in \widetilde{F}^A$, by \eqref{eq: IA size 1} and \eqref{eq: IA size 2}, we can choose $i(x) \in I_A(x)$ in such a way that the following holds for each $i\in [t]$.
\begin{align}
\begin{split}\label{eq: index choice}
\big|\{ x\in  \widetilde{F}^A :i(x)=i \}\big| &\leq \max \left (\Big\lceil \frac{ |\widetilde{F}^A| }{t/(50m^{1/2} k)}\Big\rceil, \Big\lceil\frac{|\widetilde{F}^A|}{100 \delta_0^+mt/n } \Big\rceil\right )
\stackrel{ \eqref{eq: wide tilde F size}}{\leq } \frac{n}{300 k^2 m t}.
\end{split}
\end{align}
For each $x\in \widetilde{F}^A\setminus \widetilde{E}^A_0$, we have
\begin{align}\label{eq: N(x)1}
|N^+_D(x)\setminus (W' \cup E^A_{i(x)}\cup E^B_{i(x)})| 
\stackrel{\eqref{eq: tilde E def}}{\geq} \delta^+_0/4 - |E^A_{i(x)}\cup E^B_{i(x)}| \stackrel{\eqref{eq: E size},\eqref{eq: wide tilde F size}}{\geq} k|\widetilde{F}^A|.
\end{align}
Similarly, for every $x\in \widetilde{F}^A\cap \widetilde{E}^A_0$, we have
\begin{eqnarray}\label{eq: N(x)2}
|N^-_D(x)\setminus (W' \cup E^A_{i(x)}\cup E^B_{i(x)})|
&\geq& |V\setminus W'| - |N^+_D(x)\setminus W'| - (n- d_D(x)) - |E^A_{i(x)}\cup E^B_{i(x)}| \nonumber \\
&\stackrel{\ref{W'1},\eqref{eq: E size}}{\geq }& \left (1- \frac{1}{180m} \right )n - \delta^+_0/4 - \ell - \delta^+_0/4  \stackrel{\eqref{eq: wide tilde F size}}{\geq} k|\widetilde{F}^A|,
\end{eqnarray}
where the last inequality follows since we have $\ell < n/10$ by~\eqref{eq: n min deg}.
Hence, for every $x\in \widetilde{F}^A$, we can choose a set $N^A(x)$ of the size $k$
such that $N^A(x)\cap N^A(y) =\emptyset$ for all $x\neq y \in \widetilde{F}^A$ and
\begin{align}\label{eq: N(x) def}
N^A(x) \subseteq \left\{\begin{array}{ll} 
N^+_D(x)\setminus (W' \cup E^A_{i(x)}\cup E^B_{i(x)}) &\text{if } x\in \widetilde{F}^A\setminus \widetilde{E}^A_0, \\
N^-_D(x)\setminus (W' \cup E^A_{i(x)}\cup E^B_{i(x)}) & \text{if } x\in \widetilde{F}^A\cap \widetilde{E}^A_0.
\end{array}\right.
\end{align}
For each $i\in [t]$, let 
$$W^{*}_i:= W'_i\cup \bigcup_{x\in \widetilde{F}^A \colon i(x)=i} (N^A(x)\cup \{x\}) \enspace \text{and} \enspace W^{*}:= \bigcup_{i\in [t]} W^{*}_i.$$

\begin{CLAIM}\label{cl: W* exists}
The sets $W^*_1,\dots, W^*_t$ satisfy the following.
\begin{enumerate}[label=\text{\rm (W$^*$\arabic*)}]
\item \label{W*0} $W^*_1,\dots, W^*_t$  are pairwise disjoint.
\item\label{W*1} $W'_i\subseteq W^*_i$ and $\widetilde{F}^A\subseteq W^*$.
\item \label{W*2} $3k +\ell \leq |W^*_i| \leq \frac{n}{80mt}$.
\item \label{W*3} For each $v\in W^*_i$, both $(v, V'\setminus(C\cup E^A_i))$ and  $(V'\setminus (C \cup E^B_i),v )$ are $k$-connected in $D[W^*_i]$.
\end{enumerate}
\end{CLAIM}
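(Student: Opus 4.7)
The plan is to verify the four assertions in order, with essentially all the substance concentrated in \ref{W*3}. The disjointness \ref{W*0} and inclusions \ref{W*1} will fall out directly from the construction: the $W'_i$ are pairwise disjoint by \ref{W'0}/\ref{W'1}, the sets $\{x\}\cup N^A(x)$ added to $W'_{i(x)}$ are mutually disjoint (the $N^A(x)$ were chosen disjointly from each other and from $W'$ in \eqref{eq: N(x) def}, and the vertices $x\in\widetilde{F}^A$ are distinct), and each $x\in\widetilde{F}^A$ is placed in exactly one index $i(x)$.

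For \ref{W*2}, the lower bound $3k+\ell\leq |W^*_i|$ is inherited from $W'_i\subseteq W^*_i$ and \ref{W'1}. For the upper bound, I would write $|W^*_i|\leq |W'_i|+(k+1)\cdot|\{x\in\widetilde{F}^A:i(x)=i\}|$ and combine the upper bound in \ref{W'1} with the distribution bound \eqref{eq: index choice} to get $|W^*_i|\leq \frac{n}{180mt}+\frac{(k+1)n}{300k^2mt}\leq \frac{n}{80mt}$.

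The heart of the argument is \ref{W*3}. Fix $v\in W^*_i$. If $v\in W'_i$, then \ref{W'2} already delivers both $k$-connectivities inside $D[W'_i]$, and passing to the larger host $D[W^*_i]$ only helps. Otherwise $v\in\{x\}\cup N^A(x)$ for some $x\in\widetilde{F}^A$ with $i(x)=i$. When $v\in N^A(x)$, the defining inclusion \eqref{eq: N(x) def} places $v$ in $V\setminus W'\subseteq V'\setminus C$ and outside $E^A_i\cup E^B_i$, so $v$ itself lies in both target sets; the two $k$-connectivities are then trivial via the zero-length path.

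The only genuine case is $v=x$, and this is the step I expect to do the main (though not conceptually deep) work. It splits according to \eqref{eq: I(x) def}. If $x\in\widetilde{F}^A\setminus\widetilde{E}^A_0$, then $N^A(x)$ supplies $k$ out-neighbors of $x$ sitting inside $V'\setminus(C\cup E^A_i)\cap W^*_i$, which immediately witnesses that $(x,V'\setminus(C\cup E^A_i))$ is $k$-connected in $D[W^*_i]$; meanwhile $i(x)\in I_A(x)$ gives $k$ in-neighbors $w_1,\dots,w_k$ of $x$ in $W'_i$, and for each such $w_j$ property \ref{W'2} gives that $(V'\setminus(C\cup E^B_i),w_j)$ is $k$-connected in $D[W'_i]$, so the second bullet of Lemma~\ref{lem:glue} chains these to yield $(V'\setminus(C\cup E^B_i),x)$ is $k$-connected in $D[W^*_i]$. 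The subcase $x\in\widetilde{F}^A\cap\widetilde{E}^A_0$ is completely symmetric: now $N^A(x)$ provides $k$ in-neighbors of $x$ directly inside the target $V'\setminus(C\cup E^B_i)$, while $I_A(x)$ supplies $k$ out-neighbors of $x$ in $W'_i$ through which \ref{W'2} and Lemma~\ref{lem:glue} handle the other direction. The main care required is simply to keep the direction of edges and the two target sets straight across the subcases; everything else is bookkeeping.
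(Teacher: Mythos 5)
Your proposal is correct and follows essentially the same route as the paper: disjointness and the inclusions from the construction, the size bound via $|W^*_i|\leq |W'_i|+(k+1)|\{x:i(x)=i\}|$ with \ref{W'1} and \eqref{eq: index choice}, and \ref{W*3} by reducing to $v\in W^*_i\setminus W'_i$, handling $v\in N^A(x)$ trivially and $v=x\in\widetilde{F}^A$ via the $k$ neighbours guaranteed by \eqref{eq: I(x) def} and \eqref{eq: N(x) def}. The only difference is that you spell out the application of Lemma~\ref{lem:glue} that the paper leaves implicit, which is a harmless elaboration.
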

\begin{proof}
As $i(x)$ is well-defined for all $x\in \widetilde{F}^{A}$ and $W'_1,\dots, W'_t$ are pairwise disjoint, the definition of $W^*_i$ implies \ref{W*0}. By the definition, \ref{W*1} is clear. For every $i\in [t]$, we have
$$|W^*_i| \leq |W'_i| +  (k+1)\big|\{x\in \widetilde{F}^A: i(x)=i\}| \stackrel{\ref{W'1},\eqref{eq: index choice}}{\leq}  \frac{n}{180mt} + \frac{n }{150 k m t} \leq \frac{n}{80mt}.$$
Together with the fact that $|W^*_i|\geq |W'_i| \stackrel{\ref{W'1}}{\geq} 3k+\ell$, the condition~\ref{W*2} follows. 
To prove \ref{W*3}, by \ref{W'2}, we only have to show that for all $i\in [t]$ and $v\in W^*_i\setminus W'_i$, both $(v, V'\setminus(C\cup E^A_i))$ and  $(V'\setminus (C\cup E^B_i),v )$ are $k$-connected in $D[W^*_i]$. 
If $v\in N^A(x)$ for some $x\in \widetilde{F}^A$ with $i(x)=i$, then $v \in V'\setminus (C\cup E^A_i\cup E^B_i)$, thus we are done.
If $v\in \widetilde{F}^A$ with $i(v)=i$, then 
\eqref{eq: I(x) def} together with \eqref{eq: N(x) def} implies that 
$v$ has at least $k$ out-neighbors and $k$ in-neighbors in $W'_i\cup N^A(v)$. 
Thus \ref{W*3} holds.
\end{proof}

\noindent {\bf Step 4.2. Distribution of vertices in $\widetilde{F}^B$.}
Now we will define the set $\widetilde{F}^B$ of vertices which are exceptional with respect to the partition $W^*_1,\dots, W^*_{t}$. Since $\widetilde{F}^A \subseteq W^*$,
all vertices in $V\setminus W^*$ has at least $k$ out-neighbors in $W'_i\subseteq W^*_i$ for many indices $i\in [t]$, which will simplify our analysis.

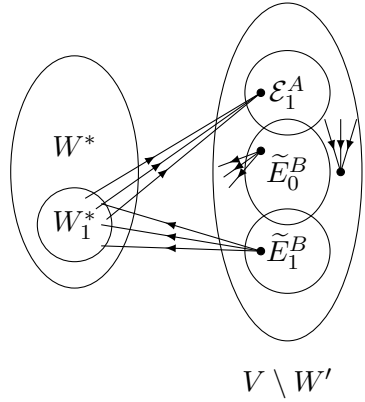
\begin{figure}
\centering
\begin{tikzpicture}[scale=0.7]

  \draw[fill=none] (0,0) ellipse [x radius=1.2,y radius=2.2];
  \node at (0,0.5) {$W^*$};

  \draw[fill=none] (0,-1) ellipse [x radius=0.7,y radius=0.7];
  \node at (0,-1) {$W^*_1$};

  \draw[fill=none] (4,0) ellipse [x radius=1.4,y radius=3.2];
  \node at (4,-4) {$V\setminus W'$};

  \draw[fill=none] (4,-1.5) ellipse [x radius=0.8,y radius=0.8];
    \draw[fill=none] (4,0) ellipse [x radius=0.8,y radius=1];
  \node at (4,0) {$\widetilde{E}^B_0$};
  
    \node at (4,-1.5) {$\widetilde{E}^B_1$};
    
      \draw[fill=none] (4,1.5) ellipse [x radius=0.8,y radius=0.8];
    \node at (4,1.5) {$\mathcal{E}^A_1$};

\filldraw[fill=black] (3.5,-1.5) circle (2pt);

\draw[middlearrow={latex}] (3.5,-1.5) -- (0.5,-1) ;
\draw[middlearrow={latex}] (3.5,-1.5) -- (0.5,-0.6) ;
\draw[middlearrow={latex}] (3.5,-1.5) -- (0.5,-1.4);

\filldraw[fill=black] (3.5,0.4) circle (2pt);


\draw[middleuparrow={latex}]  (3.5,0.4) -- (2.7,0.1);
\draw[middleuparrow={latex}]  (3.5,0.4) -- (2.8 ,-0.1);
\draw[middleuparrow={latex}]  (3.5,0.4) -- (2.9, -0.3);

\filldraw[fill=black] (5,0) circle (2pt);

\draw[middlearrow={latex}]  (4.7,1) -- (5,0);
\draw[middlearrow={latex}] (5,1) -- (5,0);
\draw[middlearrow={latex}] (5.3,1)  -- (5,0);

\filldraw[fill=black] (3.5,1.5) circle (2pt);

\draw[middledownarrow={latex}] (0.2,-0.5)  -- (3.5,1.5);
\draw[middledownarrow={latex}]  (0.4,-0.7) -- (3.5,1.5);
\draw[middledownarrow={latex}] (0.6,-0.9) -- (3.5,1.5);

\end{tikzpicture}
\caption{$\mathcal{E}^A_1$, $\widetilde{E}^B_0$, $\widetilde{E}^B_1$ and in/out-neighbours guaranteed at a vertex in each set.}\label{fig: EB}
\end{figure}

Again, for each $x\in \widetilde{F}^B$, we aim to find $i(x)\in [t]$ and a vertex-set $N^B(x)$ so that we can assign $x$ together with $N^B(x)$ to $W_{i(x)}$.
We define the following ``updated'' sets of exceptional vertices. For each $i\in [t]$,
\begin{align}\begin{split}\label{eq: B E def}
\mathcal{E}^A_i&:= \{x\in V\setminus W^{*}: |N^+_D(x)\cap W^{*}_i| <k\}, \\
\widetilde{E}^B_i&:= \{ x \in V\setminus W^{*} : |N^-_D(x)\cap W^{*}_i| < k \},\\
\widetilde{E}^B_0& := \{x\in V\setminus W^{*}: |N^-_D(x)\setminus W^{*}| < \delta_0^-/4 \}, \text{ and} \\
\widetilde{F}^B &:= \{ x \in V\setminus W^{*}: \big|\{ i'\in [t]: x\in \widetilde{E}^B_{i'}\}\big| > \frac{t}{100m^{1/2}k} \}. 
\end{split}
\end{align}
For a vertex $v\in \mathcal{E}_i^A$, it has at most $k-1$ out-neighbors in $W^*_i$ and thus by~\ref{W'1}, there are at least $k$ in-neighbors of $v$ in $W^*_i$. Similarly a vertex in $\widetilde{E}^B_i$ has at least $k$ out-neighbours in $W^*_i$.
Moreover, we can find many out-neighbours in $V\setminus W'$ of a vertex in $\widetilde{E}_0^B$ and we can find many in-neighbors in $V\setminus W'$ of a vertex in $(V\setminus W')\setminus \widetilde{E}_0^B$. See Figure~\ref{fig: EB}.

As $W'_i\subseteq W^*_i$ for all $i\in [t]$, \eqref{eq: tilde E def} implies $\mathcal{E}^A_i \subseteq \widetilde{E}^A_i$.
The definition of $E^B(i,j)$ with \eqref{eq: E def} implies 
$\widetilde{E}^B_i \subseteq E^B_i$. Hence
\begin{align}
\begin{split}\label{eq: wide tilde F size B}
|\widetilde{F}^B| &\leq \frac{100m^{1/2}k}{t}\sum_{i \in [t]}|\widetilde{E}^B_i|\stackrel{\eqref{eq: E size}}{\leq} \frac{100  m^{1/2}k t \delta_0^-}{10^{6}k^2(k+\ell)^2  m^2 t } \leq
\frac{\delta^-_0}{10^4 k^3 m^{3/2} }.
\end{split}
\end{align}

For  $x\in \widetilde{F}^B\setminus \widetilde{E}^B_0$, the vertex $x$ has at least $\delta_0^-/4$ in-neighbors in $V\setminus W^*$. As we did in Step~4.1, we wish to choose $k$ non-exceptional in-neighbors of $x$ in $V\setminus W^*$. 
However, $\delta_0^-/4$ could be much smaller than $|\mathcal{E}^A_i|$, as $d_D^-(v) \ll \delta_0^+$ could happen while $|\mathcal{E}^A_i|$ is proportional to $\delta_0^+$.
Hence, we may not be able to choose an appropriate set $N^B(x)\subseteq N^{-}_D(x)$ if we determine $i(x)$ first. To resolve this issue, for $x\in \widetilde{F}^B\setminus \widetilde{E}^B_0$, we will choose $N^B(x)$ first and then choose $i(x)$ later, while for $x\in \widetilde{F}^B\cap\widetilde{E}^B_0$, we will choose $i(x)$ first and then choose $N^B(x)$ later.

For each $x\in \widetilde{F}^B\setminus \widetilde{E}^B_0$, we have
$$
|N^{-}_D(x) \setminus (W^{*}\cup \widetilde{F}^B)| 
\geq \delta^-_0/4 - |\widetilde{F}^B| \stackrel{\eqref{eq: wide tilde F size B}}{\geq} k |\widetilde{F}^B|.
$$
Thus, we can choose sets $N^B(x) \subseteq N^{-}_D(x) \setminus (W^{*}\cup \widetilde{F}^B)$ with $|N^B(x)|=k$ in such a way that $N^B(x)\cap N^B(y)=\emptyset$ for all $x\neq y\in \widetilde{F}^B\setminus \widetilde{E}^B_0$.
For each $x\in \widetilde{F}^B\setminus \widetilde{E}^B_0$, let 
\begin{align}\label{eq: IBx def}
I_B(x):=\big\{ i\in [t]: |N^+_D(x)\cap W_i^*| \geq k\big\} \cap \bigcap_{y\in N^B(x)}\big\{ i\in [t]: y\notin  \widetilde{E}^A_i\cup \widetilde{E}^B_i \big\}.
\end{align}
This definition ensures that we can assign $\{x\}\cup N^B(x)$ to $W^*_i$ as long as $i\in I_B(x)$.
Note that for all $x\in \widetilde{F}^B$ and $z\in N^B(x)$, we have $z \notin \widetilde{F}^A$ (as $\widetilde{F}^A \subseteq W^*$), and $z\notin \widetilde{F}^B$.
Thus \eqref{eq: tilde E def} and \eqref{eq: B E def} imply that, for each $y\in N^B(x)$, we have
$$|\{ i\in [t]: y \notin \widetilde{E}^A_i\cup \widetilde{E}^B_i \}| \geq t - \frac{t}{50m^{1/2}k} - \frac{t}{100m^{1/2}k} \geq \left (1 - \frac{1}{30m^{1/2}k} \right )t.$$
This with \eqref{eq: tilde E def} implies that for every $x\in \widetilde{F}^B\setminus \widetilde{E}^B_0$,
\begin{align}\label{eq: IB size 1}
|I_B(x)| \geq |\{ i\in [t]: x\notin \widetilde{E}^A_i \}| - \frac{kt}{30 m^{1/2} k}
\geq (1- \frac{1}{50m^{1/2}k})t  - \frac{k t}{30 m^{1/2} k} \geq t/2,
\end{align}
where the first inequality follows since we have $\mathcal{E}^A_i \subseteq \widetilde{E}^A_i$ for each $i\in [t]$, and the second inequality follows since we have $x\notin \widetilde{F}^A \subseteq W^*$. Let 
$$N^B_1:=\bigcup_{x\in \widetilde{F}^B\setminus \widetilde{E}^B_0}N^B(x).$$
Then 
\begin{align}\label{eq: NB1 size}
|N^B_1| = \sum_{x\in \widetilde{F}^B\setminus \widetilde{E}^B_0} |N^B(x)| = k |\widetilde{F}^B|.
\end{align}

Now we wish to choose sets $I_B(x)$ and $N^B(x)$ for the remaining vertices $x \in \widetilde{F}^B\cap \widetilde{E}^B_0$ as follows. For every $x\in \widetilde{F}^B\cap \widetilde{E}^B_0$, we define 
\begin{align}\label{eq: I(x) def B}
I_B(x):=
\{i\in [t] : |N^-_D(x)\cap W^*_i| \geq k\}.
\end{align}
For each $x\in \widetilde{F}^{B}\cap \widetilde{E}^B_0$, we have
$$\delta_0^- \leq d_D^-(x)\leq |N_D^-(x)\setminus W^{*}| + \sum_{i\in [t]\setminus I_B(x) } k + \sum_{i\in I_B(x)} |W^{*}_i| \stackrel{\ref{W*2}}{\leq} \delta_0^-/4 + kt + \frac{|I_B(x)|n}{80mt}.$$ 
Thus by \eqref{eq: n min deg}, we have 
\begin{align*}
|I_B(x)|  \geq \frac{50 \delta_0^- mt}{n}.
\end{align*}
Now we choose $i(x) \in I_B(x)$ for all $x\in \widetilde{F}^B$, regardless whether it is in $\widetilde{F}^B\cap \widetilde{E}^B_0$ or in $\widetilde{F}^B\setminus \widetilde{E}^B_0$.
Together with \eqref{eq: IB size 1}, for every $x\in \widetilde{F}^B$, we can choose $i(x)\in I_B(x)$ such that for every $i\in [t]$,
\begin{align}\label{eq: covered by i}
|\{x\in \widetilde{F}^B : i(x) =i \}| \leq \max\left( \Big\lceil \frac{ |\widetilde{F}^B|}{t/2}\Big\rceil, \Big\lceil \frac{|\widetilde{F}^B|}{ 50\delta_0^-mt/n }\Big\rceil \right)\stackrel{\eqref{eq: wide tilde F size B}}{\leq} \frac{n}{10^3 k^3 mt}.
\end{align}
For every $x \in  \widetilde{F}^{B}\cap \widetilde{E}^B_0$, we have
\begin{eqnarray}\label{eq: N(x)B 2}
|N^+_D(x)\setminus (W^{*}\cup E^A_{i(x)} \cup E^B_{i(x)})|
&\geq & |V\setminus W^{*}| - |N^-_D(x)\setminus W^*| - (n- d_D(x)) - |E^A_{i(x)}\cup E^B_{i(x)}| \nonumber \\
&\stackrel{\ref{W*2},\eqref{eq: E size}}{\geq }& \left (1- \frac{1}{80m} \right )n - \delta^-_0/4 - \ell - \frac{n}{10^{6}} \nonumber \\
&\geq& \frac{n}{3} \stackrel{\eqref{eq: wide tilde F size B}}{\geq} 2k|\widetilde{F}^B| .
\end{eqnarray}
Thus by \eqref{eq: NB1 size},  for each $x\in \widetilde{F}^B\cap \widetilde{E}^B_0$, 
we can choose sets $N^B(x)\subseteq N^+_D(x)\setminus (W^{*}\cup E^A_{i(x)} \cup E^B_{i(x)} \cup N^B_1)$ in such a way that 
$N^B(x)\cap N^B(y) =\emptyset$ for all $x\neq y \in  \widetilde{F}^B\cap \widetilde{E}^B_0$. Hence $\{N^B(x) : x\in \widetilde{F}^B\}$ forms a collection of pairwise disjoint subsets of size $k$. As $i(x)\in I_B(x)$, together with \eqref{eq: IBx def} it follows that
\begin{align}\label{eq: N(x) def B}
N^B(x) \subseteq \left\{\begin{array}{ll} 
N^-_D(x)\setminus (W^*\cup E^A_{i(x)}\cup E^B_{i(x)}) &\text{if } x\in \widetilde{F}^B\setminus E^B_0, \\
N^+_D(x)\setminus (W^*\cup E^A_{i(x)}\cup E^B_{i(x)}) & \text{if } x\in \widetilde{F}^B\cap E^B_0.
\end{array}\right.
\end{align}
For each $i\in [t]$, let 
$$W_i:= W^{*}_i\cup \bigcup_{ x\in \widetilde{F}^B : i(x) = i} \big(\{x\}\cup N^B(x)\big).$$

\begin{CLAIM}\label{cl: W exist}
The following hold.
\begin{enumerate}[label=\text{\rm (W\arabic*)}]
\item \label{W0} $W_1,\dots, W_t$ are pairwise disjoint.
\item \label{W1} $W^*_i\subseteq W_i$ and $|W_i| \leq \frac{n}{50mt}$.
\item \label{W2} $\widetilde{F}^A\cup \widetilde{F}^B\subseteq \bigcup_{i\in [t]} W_i$.
\item \label{W3} For each $v\in W_i$, both $(v, V'\setminus(C\cup E^A_i))$ and 
$(V'\setminus (C\cup E^B_i),v )$ are $k$-connected in $D[W_i]$.
\end{enumerate}
\end{CLAIM}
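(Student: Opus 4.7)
The plan is to verify \ref{W0}--\ref{W2} directly from the construction and size bookkeeping, and to establish \ref{W3} by extending the $k$-connectivity provided by Claim~\ref{cl: W* exists}\ref{W*3} through the vertices newly added in Step~4.2 via Lemma~\ref{lem:glue}.

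For \ref{W0}, I would first note that $W^*_1,\dots,W^*_t$ are pairwise disjoint by \ref{W*0}, and that every vertex added in passing from $W^*_i$ to $W_i$ lies in $\{x\}\cup N^B(x)$ for some $x\in\widetilde{F}^B$ with $i(x)=i$. Since $\widetilde{F}^B\subseteq V\setminus W^*$ and $N^B(x)\cap W^*=\emptyset$ by \eqref{eq: N(x) def B}, none of these additions meets any $W^*_j$. Disjointness between distinct additions follows from the explicit pairwise disjointness of $\{N^B(x)\}_x$ and from the fact that each $N^B(x)$ can be chosen disjoint from $\widetilde{F}^B$: for $x\in\widetilde{F}^B\setminus\widetilde{E}^B_0$ this is built into the definition, and for $x\in\widetilde{F}^B\cap\widetilde{E}^B_0$ the generous slack in \eqref{eq: N(x)B 2} (which yields $\geq n/3\gg k|\widetilde{F}^B|$ admissible candidates) allows us to strengthen \eqref{eq: N(x) def B} by excluding $\widetilde{F}^B$ as well. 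The inclusion $W^*_i\subseteq W_i$ is immediate, and the bound
\[
|W_i|\leq|W^*_i|+(k+1)\bigl|\{x\in\widetilde{F}^B:i(x)=i\}\bigr|\leq\frac{n}{80mt}+\frac{(k+1)n}{10^{3}k^{3}mt}\leq\frac{n}{50mt}
\]
coming from \ref{W*2} and \eqref{eq: covered by i} establishes \ref{W1}. For \ref{W2}, $\widetilde{F}^A\subseteq W^*\subseteq\bigcup_iW_i$ by \ref{W*1} and every $x\in\widetilde{F}^B$ is placed into $W_{i(x)}$ by construction.

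For \ref{W3}, fix $v\in W_i$. If $v\in W^*_i$, both required connectivities are inherited from $D[W^*_i]\subseteq D[W_i]$ via \ref{W*3}. If $v\in N^B(x)$ for some $x\in\widetilde{F}^B$ with $i(x)=i$, then \eqref{eq: N(x) def B} places $v$ in $V'\setminus(C\cup E^A_i\cup E^B_i)$, so both connectivities hold trivially via the length-zero path at $v$. The last case is $v=x\in\widetilde{F}^B$ with $i(x)=i$. If $x\in\widetilde{F}^B\setminus\widetilde{E}^B_0$, then $N^B(x)\subseteq W_i\cap(V'\setminus(C\cup E^A_i\cup E^B_i))$ supplies $k$ in-neighbours of $x$, immediately yielding that $(V'\setminus(C\cup E^B_i),x)$ is $k$-connected in $D[W_i]$; meanwhile $i(x)\in I_B(x)$ together with \eqref{eq: IBx def} gives $k$ out-neighbours of $x$ in $W^*_i$, and Lemma~\ref{lem:glue} glues the one-step out-fan from $x$ with the $k$-connectivity inside $D[W^*_i]$ from \ref{W*3} to yield that $(x,V'\setminus(C\cup E^A_i))$ is $k$-connected in $D[W_i]$. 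The subcase $x\in\widetilde{F}^B\cap\widetilde{E}^B_0$ is symmetric, with the two directions swapped: $N^B(x)$ now supplies $k$ out-neighbours of $x$ in $V'\setminus(C\cup E^A_i)$, while \eqref{eq: I(x) def B} supplies $k$ in-neighbours of $x$ in $W^*_i$, and the same gluing finishes the argument.

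The only delicate point lies in the disjointness argument of \ref{W0} when $x\in\widetilde{F}^B\cap\widetilde{E}^B_0$, because \eqref{eq: N(x) def B} does not visibly prevent $N^B(x)$ from meeting $\widetilde{F}^B$; this is resolved by the slack in \eqref{eq: N(x)B 2}, which is enormous compared with $|\widetilde{F}^B|$ by \eqref{eq: wide tilde F size B}. Everything else is a routine bookkeeping check combined with a single application of Lemma~\ref{lem:glue} layered on top of Claim~\ref{cl: W* exists}.
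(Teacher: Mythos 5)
Your proposal is correct and follows essentially the same route as the paper: \ref{W0}--\ref{W2} by direct bookkeeping from \ref{W*0}--\ref{W*2} and \eqref{eq: covered by i}, and \ref{W3} by reducing to the connectivity of the newly added vertices to/from $W^*_i$ (trivial for $v\in N^B(x)$ since $v\in V'\setminus(C\cup E^A_i\cup E^B_i)$, and via the $k$ in/out-neighbours in $W^*_i\cup N^B(x)$ for $v=x\in\widetilde{F}^B$) followed by Lemma~\ref{lem:glue} on top of \ref{W*3}. The one point where you are more careful than the paper --- ensuring $N^B(x)$ avoids $\widetilde{F}^B$ when $x\in\widetilde{F}^B\cap\widetilde{E}^B_0$ so that the sets $\{x\}\cup N^B(x)$ are genuinely pairwise disjoint --- is a legitimate detail the paper dismisses as obvious, and your fix via the slack in \eqref{eq: N(x)B 2} is exactly right.
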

\begin{proof}
Note that \ref{W0} is obvious from \ref{W*0} and the definition of $N^B(x)$.
Note that \ref{W2} is obvious as $i(x)$ is defined for all $x\in \widetilde{F}^A\cup \widetilde{F}^B$.
For each $i\in [t]$, 
$$|W_i| \leq |W^*_i| + (k+1)\big|\{ x\in \widetilde{F}^B: i(x)=i\}\big| \stackrel{\ref{W*2},\eqref{eq: covered by i}}{\leq } \frac{n}{80mt} + 
\frac{n (k+1)}{10^3 k^3 mt}\leq \frac{n}{50mt}.$$
Thus we have \ref{W1}.
To show \ref{W3}, by Lemma~\ref{lem:glue} together with~\ref{W*3}, it suffices to show that for every $v\in W_i\setminus W^*_i$, both $(v,W^*_i)$ and $(W^*_i,v)$ are $k$-connected in $D[W_i]$.
If $v\in N^B(x)$ for some $x\in \widetilde{F}^B$ with $i(x)=i$, then $v \in V'\setminus (C\cup E^A_i\cup E^B_i)$ by~\eqref{eq: N(x) def B}, and thus we are done.
If $v\in \widetilde{F}^B$ with $i(v)=i$, then 
\eqref{eq: I(x) def B}, \eqref{eq: IBx def} and \eqref{eq: N(x) def B} altogether imply that 
$v$ has at least $k$ out-neighbors and $k$ in-neighbors in $W^*_i\cup N^B(v)$. This proves \ref{W3} and completes the proof of the claim.
\end{proof}
\vspace{0.3cm}

\noindent {\bf Step 5. Finishing the proof of Lemma~\ref{lem:main}} 
To finish the proof of  Lemma~\ref{lem:main}, we now verify \ref{A5}--\ref{A4}. By \ref{W0}, $W_1,\dots, W_t$ are pairwise disjoint.
Note that \ref{W1} implies \ref{A1}, \ref{W2} with \eqref{eq: tilde E def} and \eqref{eq: B E def} implies \ref{A3}, and~\ref{W'0} implies~\ref{A5}.

For each $i\in [t]$, because $C \subseteq \bigcup_{i'=1}^{t}W_{i'}$ and $C_i\subseteq W_i$, \eqref{eq: E def} imply that every vertex  $v\in V\setminus (E^A_i\cup E^B_i\cup \bigcup_{i'\in [t]} W_{i'})$ satisfies $|N^+_D(x)\cap W_i|,|N^-_D(x)\cap W_i| \geq k$. We also have
$$\Big|V\setminus \big(E^A_i\cup E^B_i\cup \bigcup_{i'\in [t]} W_{i'} \big) \Big|
\stackrel{\eqref{eq: E size}}{\geq} \Big|V\setminus  \bigcup_{i'\in [t]} W_{i'} \Big| - \frac{n}{10^6 k^2 (k+\ell)^2 m^2} \stackrel{\ref{W1}}{\geq } (1- \frac{1}{10^5 k^4m^2})\Big|V\setminus \bigcup_{i\in [t]} W_{i'}\Big|,$$
hence \ref{A4} follows.

Now it only remains to show~\ref{A2}. Let $u,v \in V(W_i)$ and $S\subseteq V(W_i)$ with $|S| \leq k-1$. By \ref{W3}, both $(u,V'\setminus(C\cup E^A_i))$ and $(V'\setminus (C\cup E_i^B),v)$ are $k$-connected in $D[W_i]$.
Thus there exist a path $P^A$ from $u$ to a vertex $u'\in V'\setminus(C\cup E^A_i)$ in $D[W_i]\setminus S$ and a path $P^B$ from a vertex $v'\in V'\setminus(C\cup E^B_i)$ to $v$ in $D[W_i]\setminus S$.
By~\eqref{eq: E def}, there are $J^A, J^B \subseteq [3k]$ with $|J^A|,|J^B|\geq 2k$ 
 such that $u' \notin E^A(i,j)$ for each $j\in J^A$ and $v'\notin E^B(i,j')$ for each $j'\in J^B$.
 Note that $|J^A\cap J^B|\geq k > |S|$. 
As $3k$ sets $\left \{ A_{i,s}\cup B_{i,s}\cup V(P_{i,s}) \right \}_{s \in [3k]}$ are pairwise disjoint by~\ref{AB1} and~Proposition~\ref{prop: W'}, there exists $j\in J^A\cap J^B$ such that $S\cap (A_{i,j} \cup B_{i,j} \cup V(P_{i,j}) ) =\emptyset$.
Since $j\in J^A\cap J^B$, there exist $u''\in N^+_D(u')\cap A_{i,j}$ and $v''\in N^-_D(v')\cap B_{i,j}$. By \ref{AB2}, there exist a path $Q^A$ in $D[A_{i,j}]$ from $u''$ to $a_{i,j}$ and a path $Q^B$ in $D[B_{i,j}]$ from $b_{i,j}$ to $v''$.
Then $P^A$, $u'u''$, $Q^A$, $P_{i,j}$, $Q^B$, $v''v'$ and $P^B$ together form a directed walk from $u$ to $v$ in $D[W_i]\setminus S$. Therefore, there exists a path from $u$ to $v$ in $D[W_i]\setminus S$ and we obtain \ref{A2}. This completes the proof of Lemma~\ref{lem:main}.

\section{Derivation of main results}\label{sec:proof_balanced}
In this section, we prove Theorem~\ref{thm:cycle} and Theorem~\ref{thm:balanced}. Both theorems will be derived from Lemma~\ref{lem:main} by using appropriate auxiliary bipartite graphs.

\begin{proof}[Proof of Theorem~\ref{thm:balanced}]
Let $D$ be a strongly $10^8 qk^2\ell(k+\ell)^2tm^2\log{m}$-connected $n$-vertex digraph with $\delta(D)\geq n-\ell$, and $Q_1 , \dots , Q_t \subseteq V(D)$ be $t$ disjoint sets with $|Q_i| \leq q$. For each $i\in [t]$, we have $a_i\in \mathbb{N}$ with $\sum_{i\in [t]} a_i \leq n$ and $a_i\geq n/(10tm)$. Let us define $b_1 := a_1 + \left (n - \sum_{i=1}^{t}a_i \right )$ and $b_i := a_i$ for $2 \leq i \leq t$. Then $\sum_{i=1}^{t}b_i = n$.

As $D$ is a strongly $10^8 qk^2\ell(k+\ell)^2tm^2\log{m}$-connected $n$-vertex digraph with $\delta(D)\geq n-\ell$, there are pairwise disjoint sets $W_1,\dots, W_t$ satisfying \ref{A1}--\ref{A5} by Lemma~\ref{lem:main}.
For every $i\in [t]$, it follows that
\begin{align} \label{eq: size}
a_i -|W_i|  \stackrel{\ref{A1}}{\geq} \frac{n}{10tm}-\frac{n}{50tm} \geq \frac{n}{20tm}.
\end{align}
Let 
$$A:= V(D)\setminus \bigcup_{i\in [t]} W_i.$$
For each $i\in [t]$, let 
$$U_i:=\big\{ x\in A: |N_D^+(x)\cap W_i|\geq k  \text{ and } |N_D^-(x)\cap W_i| \geq k\big\}.$$
Then for each $i\in [t]$, we have
\begin{align}\label{eq: Ui size A4}
|U_i| \stackrel{\ref{A4}}{\geq} (1- \frac{1}{10^5k^4 m^2})|A|. 
\end{align}
 For each $i \in [t]$ and $j \in [b_i - |W_i|]$, we define a new vertex $v_{i,j}$ and let $B$ be the set of all these new vertices. Let $H$ be an auxiliary bipartite graph with vertex partition $(A,B)$ such that
$$E(H):= \bigcup_{i\in [t]} \big\{ x v_{i,j}:  j\in [b_i-|W_i|], x\in U_i  \big\}.$$
Note that we have
\begin{align}\label{eq: A B size}
(1-\frac{1}{50m})n \stackrel{\ref{A1}}{\leq} |A| = n - \sum_{i\in [t]} |W_i| = \sum_{i\in [t]} (b_i- |W_i| ) = |B| \leq n.
\end{align}
Moreover, for each $x\in A$, 
\ref{A3} with the fact $b_i\geq a_i$ implies that 
$$d_H(x) = \sum_{i\in [t]: x\in U_i }(b_i-|W_i|) 
\stackrel{\eqref{eq: size}}{\geq } \frac{n}{20tm} |\{ i\in [t]: x\in U_i\}| 
\stackrel{\ref{A3}}{\geq} \frac{n}{20tm} \left (1- \frac{1}{30k} \right )t  \geq \frac{n}{30m}.$$
For every $i\in [t]$ and $v_{i,j} \in B$, we have
$$d_H(v_{i,j}) = |U_i| \stackrel{\eqref{eq: Ui size A4},\eqref{eq: A B size}}{\geq}
 (1- \frac{1}{40m})n.$$
Thus for all $x\in A$, $i\in [t]$ and $v_{i,j}\in B$, we have
$$d_H(x)+ d_{H}(v_{i,j})\geq  (1- \frac{1}{40m})n + \frac{n}{30m} > |A|.$$
This together with Fact~\ref{fact: perfect matching} implies that 
$H$ has a perfect matching $M$. For each $i\in [t]$, let 
$$V_i:= \big\{ x\in A : x v_{i,j} \in M \text{ for some } j\in [b_i-|W_i|]\big\}.$$
Then by the definition of $H$, we have $V_i\subseteq U_i$ and 
$V_1,\dots, V_t$ forms a partition of $A$ such that 
$|V_i| = b_i - |W_i|$ for each $i\in [t]$. Now we remove $b_1 - a_1\stackrel{\eqref{eq: size}}{ \leq}  |V_1|$ arbitrary vertices from $V_1$, then for each $i\in [t]$ we have $|V_i| = a_i - |W_i|$.

By the definition of $U_i$, each vertex $v\in V_i$ has at least $k$ in-neighbors and at least $k$ out-neighbors in $W_i$.
Hence \ref{A2} together with Lemma~\ref{lem:glue} implies that 
$D[W_i\cup V_i]$ is strongly $k$-connected.
As $\left \{W_i\cup V_i\right \}_{i \in [t]}$ forms a partition of $V(D)$ and $|W_i\cup V_i| = |W_i|+ a_i-|W_i| = a_i$ for every $i \in [t]$ and $Q_i \subseteq W_i$ by~\ref{A5}, this gives a desired partition.
\end{proof}

The proof of Theorem~\ref{thm:cycle} is more involved than the proof of Theorem~\ref{thm:balanced}. In addition to the auxiliary matching techniques used before, we also use Corollary~\ref{cor:twocycles} in the proof.

\begin{proof}[Proof of Theorem~\ref{thm:cycle}]
Let $T$ be a strongly $10^{50} t$-connected $n$-vertex tournament with any $t$ distinct vertices $x_1 , \dots , x_t$.
For each $i\in [t]$, let $\ell_i \in \mathbb{N}$ with $\ell_i\geq 3$ and $\sum_{i\in [t]} \ell_i = n$. 

The idea is as follows. Let $\ell_1\geq \dots \geq \ell_t$.
We will use Lemma~\ref{lem:main} to partition $V(T)$ into $t$ sets $W_1,\dots, W_t$ satisfying \ref{A1}--\ref{A5}. By using \ref{A1}--\ref{A5}, we will distribute vertices outside $\bigcup_{i\in [t]}W_i$ to each $W_i$ to obtain partition $W_1\cup W^*_1\cup V_1,\dots, W_t\cup W^*\cup V_t$ of $V(T)$ such that each $T[W_i\cup W^*_i\cup V_i]$ is strongly $10^9$-connected with size $\ell'_i$ such that the following holds for some $s\in [t]$. 
$$\ell_i \geq \ell'_i \text{ for each } i\in [s] \text{ and } \ell'_i \geq \ell_i \text{ for each } i\in [t]\setminus [s].$$ Furthermore, we will ensure that there exists a partition $J_1,\dots, J_s$ of $[t]\setminus [s]$ such that 
$\ell_i - \ell'_i = \sum_{j\in J_i} (\ell'_j-\ell_j)$ for all $i\in [s]$.
By applying Corollary~\ref{cor:twocycles} to $T[W_j\cup W^*_j\cup V_j]$ for each $j\in J_i$ and $i\in [s]$, we will partition it into two cycles of size $\ell_j$ and $\ell'_j-\ell_j$.  
By using collected properties, we will be able to combine all the cycles of length $\ell'_j-\ell_j$ with $j\in J_i$ together with $W_i\cup W^*_i\cup V_i$, we will obtain the desired collections of cycles of prescribed lengths.

By permuting indices if necessary, we assume that 
$\ell_1\geq \dots \geq \ell_t$.
Let 
\begin{align}\label{eq: s def}
s := \max\left\{ i\in [t]: \ell_i >\frac{n}{30t}\right\}.
\end{align}
Note that we have $s \geq 1$ as $\sum_{i\in [t]}\ell_i = n$.

We apply Lemma~\ref{lem:main} to $T$ with $10^9,1,10,1$ and $t$ playing the roles of $k,\ell,m,q$ and $t$, respectively to obtain pairwise disjoint sets $W_1,\dots, W_t$ satisfying the following for every $i\in [t]$, where $W:= \bigcup_{i\in [t]} W_i$ and
$U_{i}:=\{ v\in V(T)\setminus W: |N_D^+(v)\cap W_i|\geq 10^9 \text{ and } |N_D^-(v)\cap W_i| \geq 10^9 \}$.
\begin{enumerate}[label=\text{\rm (C\arabic*)}]
\item \label{C1} $|W_i| \leq \frac{n}{500t}$ and $n':= |V(T)\setminus W| \geq (1- \frac{1}{500})n$.
\item \label{C2} $T[W_i]$ is strongly $10^9$-connected.
\item \label{C3}$\left|\big\{i'\in [t] :|N_T^{+}(w)\cap W_{i'}|\geq 10^9 \text{ and }
|N_T^{-}(w)\cap W_{i'}| \geq 10^9 \big\}\right|\geq (1- \frac{1}{9 \cdot 10^{10}})t$ holds for every $w \in V(T) \setminus W$.
\item \label{C4}$|U_i| \geq (1- \frac{1}{10^{43}})|V(T)\setminus W|$.
\item \label{C5} $x_i \in W_i$.
\end{enumerate}
Now we take a partition of $[t]\setminus [s]$ satisfying the following properties.
\begin{CLAIM}\label{cl: partition [t][s]}
There exists a partition $J_1,\dots, J_s$ of $[t]\setminus [s]$ satisfying the following.
\begin{enumerate}[label=\text{\rm (J\arabic*)}]
\item\label{J1} For every $i\in [s]$, we have $|W_{i}| + |J_{i}|\cdot  \lceil \frac{n}{5t} \rceil + \frac{n}{40t} \leq \ell_{i}$.
\end{enumerate}
\end{CLAIM}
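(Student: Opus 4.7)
My plan is to reduce the claim to a capacity-counting exercise followed by a one-line greedy assignment. For each $i\in[s]$, define the ``capacity''
$$c_i := \left\lfloor \frac{\ell_i - |W_i| - n/(40t)}{\lceil n/(5t) \rceil} \right\rfloor,$$
so that any partition $J_1,\dots,J_s$ of $[t]\setminus[s]$ with $|J_i|\leq c_i$ for every $i\in [s]$ automatically satisfies~\ref{J1}. Since \ref{J1} imposes no constraint coupling \emph{which} index of $[t]\setminus[s]$ goes into which $J_i$, it will suffice to show $\sum_{i\in[s]} c_i \geq t-s$; a standard greedy argument (place each of the $t-s$ indices into any bin with remaining capacity) then produces the required partition.

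For the capacity estimate, the key inputs are the ordering $\ell_1 \geq \dots \geq \ell_t$ together with the definition $s = \max\{i\in [t]\colon \ell_i > n/(30t)\}$, which forces $\ell_j \leq n/(30t)$ for every $j\in [t]\setminus[s]$, and the size bound $|W_i|\leq n/(500t)$ from~\ref{C1}. These give
$$\sum_{i\in[s]} \ell_i \;\geq\; n - (t-s)\frac{n}{30t} \;\geq\; \frac{29}{30}n \quad\text{and}\quad \sum_{i\in[s]}|W_i| \;\leq\; \frac{n}{500},$$
and hence $\sum_{i\in[s]}(\ell_i - |W_i| - n/(40t)) \geq n\bigl(1 - \tfrac{1}{30} - \tfrac{1}{500} - \tfrac{1}{40}\bigr) = \tfrac{2819}{3000}n$. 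Dividing by $\lceil n/(5t) \rceil \leq n/(5t) + 1$ and subtracting at most $s$ for the floor losses, I expect $\sum_{i\in[s]} c_i \geq 4t - s \geq t - s$ with comfortable slack; the $+1$ in the denominator is harmless because the connectivity hypothesis forces $n$ to be large in $t$.

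Two minor bookkeeping items round out the argument. First, each $c_i$ must be a nonnegative integer: this follows from $\ell_i > n/(30t)$ (by~\eqref{eq: s def}) and $|W_i|\leq n/(500t)$, since $\tfrac{1}{30}-\tfrac{1}{500}-\tfrac{1}{40} = \tfrac{19}{3000} > 0$ makes the numerator positive. Second, the ``greedy'' step is genuinely trivial because there are no cross-constraints between bins. The only obstacle I foresee is managing the constants carefully so that the numerical factor $\tfrac{2819}{3000}/\tfrac{1}{5} \approx 4.7$ comfortably exceeds $1$; this is a pure arithmetic verification and not a combinatorial difficulty.
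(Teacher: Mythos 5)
Your proof is correct and is essentially the paper's argument in a different wrapper: the paper chooses a maximal admissible collection $J_1,\dots,J_s$ and derives the contradiction $n<n$ by summing the violated inequalities, which is exactly your capacity bound $\sum_{i\in[s]}c_i\geq t-s$ rearranged, using the same three inputs ($\ell_j\leq n/(30t)$ for $j>s$, $|W_i|\leq n/(500t)$, and $\sum_i\ell_i=n$). Your arithmetic checks out (including the nonnegativity of each $c_i$ and the harmlessness of the $+1$ from the ceiling, since $n\geq 10^{50}t$), so this is a valid proof by the same route.
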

\begin{proof}
We may assume that $s<t$, otherwise the claim is obvious. Let $J_1,\dots, J_s$ be a collection of sets with maximum $|\bigcup_{i\in [s]} J_i|$ among all collections of pairwise disjoint subsets of $[t]\setminus [s]$ satisfying~\ref{J1}.
As $J_1=\dots =J_s =\emptyset$ satisfies~\ref{J1}, such a choice exists. For some $j \in [t]\setminus [s]$, if $j \notin \bigcup_{i\in [s]} J_{i}$ then for every $i\in [s]$, we have
$$|W_{i}| + (|J_{i}|+1)  \lceil \frac{n}{5t} \rceil + \frac{n}{40t}  > \ell_{i},$$
otherwise $J_1,\dots, J_{i-1}, J_{i}\cup \{j\}, J_{i+1},\dots, J_{s}$ still satisfies \ref{J1}, contradicting the maximality of $J_1,\dots, J_s$.
However, we have
\begin{eqnarray*}
n &=& \sum_{i\in [t]} \ell_i 
< \sum_{i\in [s]} \Big(|W_i| + (|J_i|+1) (\frac{n}{5t}+1) + \frac{n}{40t}\Big) + \sum_{i\in [t]\setminus[s]} \ell_i  \\
&\stackrel{\eqref{eq: s def}, \ref{C1}}{<}& \frac{sn}{500t} + \frac{(t-s)n}{5t} + (t-s) + \frac{sn}{5t}+ s + \frac{sn}{40t} + \frac{(t-s)n}{30t} < n,
\end{eqnarray*}
where we obtain the inequalities as $0<s<t$. Hence the choice of $J_1,\dots, J_s$ gives a partition of $[t]\setminus [s]$ satisfying \ref{J1}.
\end{proof}
For all $i\in [s]$ and $j\in J_i$, 
$$|U_i\cap U_j | \stackrel{\ref{C4}}{\geq} (1-\frac{2}{10^{43}})|V(T)\setminus W| \stackrel{\ref{C1}}{>} \lceil \frac{n}{20t} \rceil (t-s).$$
Thus, for all $i\in [s]$ and $j\in J_i$, we can choose a set
\begin{align}\label{eq: W*sizees}
W^*_j \subseteq U_i\cap U_j  \text{ with }  |W^*_j| = \lceil \frac{n}{20t} \rceil -|W_j|
\end{align}
in such a way that $W^*_{s+1},\dots, W^*_{t}$ are pairwise disjoint, and let $W^*_i := \emptyset$ for $i \in [s]$.

For every $i\in [t]$, let us define
\begin{align}\label{eq: bi sizess}
b_i:= \left\{\begin{array}{ll}
\ell_i - |W_i| - |J_i|\cdot  \lceil \frac{n}{5t} \rceil\stackrel{\ref{J1}}{\geq}\frac{n}{40t} & \text{if } i\in [s] \\
\ell_i + \lceil \frac{n}{5t} \rceil -\lceil \frac{n}{20t} \rceil \geq \frac{3n}{20t} & \text{if } i\in [t]\setminus[s] .
\end{array}\right.
\end{align}
Then we have 
\begin{eqnarray}\label{eq: bi sum}
\sum_{i\in [s]} (b_i +|W_i| )+ \sum_{i\in [t]\setminus[s]} (b_i+ |W_i\cup W^*_i|)
\stackrel{\eqref{eq: W*sizees}}{=} \sum_{i\in [t]} \ell_i -  \sum_{i\in [s]} |J_i| \lceil \frac{n}{5t} \rceil + \sum_{i\in [t]\setminus [s]} \lceil \frac{n}{5t} \rceil = \sum_{i\in [t]}\ell_i  =n,
\end{eqnarray}
since $\left \{ J_1,\dots, J_s \right \}$ is a partition of $[t]\setminus[s]$.
Now we are ready to define an auxiliary bipartite graph $H$.
We consider the vertex set $A$ and a set $B$ of 
new vertices as follows.
$$A:= V(T) \setminus (W\cup \bigcup_{i\in [t]} W^*_i) \enspace \text{and}\enspace
B:=\{ v_{i,j} : i\in [t], j\in [b_i] \}.$$
Moreover, by \eqref{eq: bi sum}, we have $|A|=|B|$. 
Then as $|W_i|+|W_i^*| = \lceil\frac{n}{20t}\rceil$ holds for each $i\in [t]\setminus[s]$ by \eqref{eq: W*sizees}, \ref{C1} implies
\begin{align}\label{eq: A B size2}
\frac{9n}{10}\leq |A|=|B|\leq n.
\end{align}
Let $H$ be a bipartite graph with vertex partition $(A,B)$ and
$$E(H):= \bigcup_{i\in [t]} \{ x v_{i,j}:  j\in [b_i], x\in U_i  \} .$$
Then 
\ref{C3} implies that, for each $x\in A$ 
$$d_H(x) = \sum_{i\in [t]\colon x\in U_i } b_i 
\stackrel{\eqref{eq: bi sizess}}{\geq } \frac{n}{40t} \big|\{ i\in [t]: x\in U_i\}\big| 
\geq \frac{n}{40t} \left (1- \frac{1}{9 \cdot 10^{10}} \right )t  \geq \frac{n}{50}.$$
Also, for all $i\in [t]$ and $v_{i,j} \in B$, we have
\begin{eqnarray*}
d_H(v_{i,j}) = |U_i \setminus \bigcup_{i'\in [t]} W^*_{i'}|
\stackrel{\ref{C4}}{\geq} |V(T)\setminus (W\cup \bigcup_{i'\in [t]} W^*_{i'})|
- \frac{1}{10^{43}}|V(T)\setminus W| 
  \stackrel{\eqref{eq: A B size2},\ref{C1}}{\geq} \left (1 -  \frac{2}{10^{43}} \right )|A|.
\end{eqnarray*}
Hence for every $x\in A$, $i\in [t]$ and $v_{i,j}\in B$, it follows that 
$$d_H(x)+ d_{H}(v_{i,j})\geq  \frac{n}{50} +  (1 -  \frac{2}{10^{43}}  ) |A| \stackrel{\eqref{eq: A B size2}}{>}|A|.$$ 
This together with Fact~\ref{fact: perfect matching} implies that $H$ has a perfect matching $M$. For every $i\in [t]$, let 
$$V_i:= \{ x\in A : x v_{i,j} \in M \text{ for some } j\in [b_i]\}.$$
Then by the definition of $H$, we have $V_i\subseteq U_i$ and 
$V_1,\dots, V_t$ form a partition of $A$ such that 
$|V_i| = b_i$ for each $i\in [t]$.
Also $W^*_i \subseteq U_i$ for all $i\in [t]$.
By the definition of $U_i$, each vertex $v\in W^*_i \cup V_i$ has at least $10^9$ in-neighbors and at least $10^9$ out-neighbors in $W_i$. By~\ref{C2} together with Lemma~\ref{lem:glue}, $T[W_i\cup W^*_i\cup V_i]$ is strongly $10^9$-connected.
Moreover, for all $i\in [s]$ and $j\in[t]\setminus [s]$, we have
\begin{align}\label{eq:WW*V size}
|W_i\cup W^*_i\cup V_i| \stackrel{\eqref{eq: bi sizess}}{=} \ell_i - |J_i|\lceil \frac{n}{5t}\rceil \enspace\text{and}\enspace 
|W_j\cup W^*_j\cup V_j| \stackrel{\eqref{eq: bi sizess}}{=} \ell_j + \lceil \frac{n}{5t} \rceil.
\end{align}
Note that we have $\ell_i\geq 3$ and $\lceil \frac{n}{5t}\rceil\geq 3$ for each $i\in [t]\setminus [s]$. Thus by Corollary~\ref{cor:twocycles}, for each $i\in [t]\setminus[s]$, there exist two vertex-disjoint cycles $C_i$ and $C'_i$ in $T[W_i\cup W^*_i\cup V_i]$ with $x_i \in V(C_i)$, $V(C_i)\cup V(C'_i) = W_i\cup W^*_i\cup V_i$ and $|C_i| = \ell_i$ and $|C'_i| = \lceil \frac{n}{5t}\rceil$.

By~\eqref{eq: s def},~\ref{C1} and~\eqref{eq: W*sizees}, for every $j\in [t]\setminus [s]$, we have $|W^*_j| \geq \lceil \frac{n}{20t} \rceil - \frac{n}{500t} > \ell_j$. Thus for each $j\in [t]\setminus [s]$, there is a vertex $y_j \in V(C'_j)\cap W^*_j$.
By~\eqref{eq: W*sizees}, for all $i\in[s]$ and $j\in J_i$, it follows that $y_j \in U_i$.
For each $i\in [s]$, we let 
$$\widetilde{V}_i:= W_i\cup V_i \cup \bigcup_{j\in J_i} V(C'_j).$$
As $V_i\subseteq U_i$, each $x\in V_i$ has at least $10^9$ in-neighbors and out-neighbors in $W_i$.
For all $j\in J_i$ and $x\in V(C'_j)$, 
there exists a path from $x$ to $y_j$ on $C'_j$, and a vertex $y_j\in U_i$ has at least one out-neighbor in $W_i$, together we obtain a path from $x$ to $W_i$ in $T[\widetilde{V}_i]$. Similarly, we can also obtain a path from $W_i$ to $x$ in $T[\widetilde{V}_i]$. By~\ref{C2} and Lemma~\ref{lem:glue}, we conclude that $T[\widetilde{V}_i]$ induces a strongly connected tournament.
By Camion's theorem (Theorem~\ref{thm:camion}), for each $i\in [s]$, the tournament $T[\widetilde{V}_i]$ contains a cycle $C_i$ of length
$$|\widetilde{V}_i| = |W_i|+ b_i + |J_i| \lceil \frac{n}{5t}\rceil 
\stackrel{\eqref{eq: bi sizess}}{=} \ell_i.$$

Since $\big\{\widetilde{V}_i \big\}_{i \in [s]} \cup \left \{V(C_i) \right \}_{i \in [t]\setminus [s]}$ is a partition of $V(T)$, $C_1,\dots, C_t$ form a set of vertex-disjoint $t$ cycles with $x_i \in V(C_i)$ and $|V(C_i)| = \ell_i$ for $i \in [t]$. This completes the proof.
\end{proof}

\bibliographystyle{abbrv}
\bibliography{references-ecd}

\begin{thebibliography}{10}

\bibitem{Abbasi}
S.~Abbasi.
\newblock {\em The solution of the El-Zahar Problem}.
\newblock PhD thesis, Rutgers University, 1998.

\bibitem{amar1991}
D.~Amar and A.~Raspaud.
\newblock Covering the vertices of a digraph by cycles of prescribed length.
\newblock {\em Discrete Math.}, 87(2):111--118, 1991.

\bibitem{jensen2000}
J.~Bang-Jensen, Y.~Guo, and A.~Yeo.
\newblock Complementary cycles containing prescribed vertices in tournaments.
\newblock {\em Discrete Math.}, 214(1-3):77--87, 2000.

\bibitem{bang2008digraphs}
J.~Bang-Jensen and G.~Gutin.
\newblock {\em Digraphs}.
\newblock Springer Monographs in Mathematics. Springer-Verlag London, Ltd.,
  London, second edition, 2009.
\newblock Theory, algorithms and applications.

\bibitem{bang2018classes}
J.~Bang-Jensen and G.~Gutin.
\newblock Classes of directed graphs.
\newblock {\em Springer Monographs in Mathematics Springer}, 4(661):978--3,
  2018.

\bibitem{bollobas1996highly}
B.~Bollob{\'a}s and A.~Thomason.
\newblock Highly linked graphs.
\newblock {\em Combinatorica}, 16(3):313--320, 1996.

\bibitem{camion1959}
P.~Camion.
\newblock Chemins et circuits hamiltoniens des graphes complets.
\newblock {\em C. R. Acad. Sci. Paris}, 249:2151--2152, 1959.

\bibitem{chen2001}
G.~Chen, R.~J. Gould, and H.~Li.
\newblock Partitioning vertices of a tournament into independent cycles.
\newblock {\em J. Combin. Theory Ser. B}, 83(2):213--220, 2001.

\bibitem{GS18tournamentlinkage}
A.~Gir\~{a}o and R.~Snyder.
\newblock Highly linked tournaments with large minimum out-degree.
\newblock {\em arXiv preprint arXiv:1801.08249}, 2018.

\bibitem{guo1997}
Y.~Guo.
\newblock Spanning local tournaments in locally semicomplete digraphs.
\newblock {\em Discrete Appl. Math.}, 79(1-3):119--125, 1997.
\newblock 4th Twente Workshop on Graphs and Combinatorial Optimization
  (Enschede, 1995).

\bibitem{hajnal1983}
P.~Hajnal.
\newblock Partition of graphs with condition on the connectivity and minimum
  degree.
\newblock {\em Combinatorica}, 3(1):95--99, 1983.

\bibitem{kangstrong2018}
D.~Y. Kang.
\newblock Sparse highly connected spanning subgraphs in dense directed graphs.
\newblock {\em arXiv preprint arXiv:1801.01795}, 2018.

\bibitem{kang2017sparse}
D.~Y. Kang, J.~Kim, Y.~Kim, and G.~Suh.
\newblock Sparse spanning {$k$}-connected subgraphs in tournaments.
\newblock {\em SIAM J. Discrete Math.}, 31(3):2206--2227, 2017.

\bibitem{keevash2009cycle}
P.~Keevash and B.~Sudakov.
\newblock Triangle packings and 1-factors in oriented graphs.
\newblock {\em J. Combin. Theory Ser. B}, 99(1):709--727, 2009.

\bibitem{kim2016bipartitions}
J.~Kim, D.~K\"uhn, and D.~Osthus.
\newblock Bipartitions of highly connected tournaments.
\newblock {\em SIAM J. Discrete Math.}, 30(2):895--911, 2016.

\bibitem{kuhn2014proof}
D.~K\"uhn, J.~Lapinskas, D.~Osthus, and V.~Patel.
\newblock Proof of a conjecture of {T}homassen on {H}amilton cycles in highly
  connected tournaments.
\newblock {\em Proc. Lond. Math. Soc. (3)}, 109(3):733--762, 2014.

\bibitem{kuhn2003}
D.~K{\"u}hn and D.~Osthus.
\newblock Partitions of graphs with high minimum degree or connectivity.
\newblock {\em J. Combin. Theory Ser. B}, 88(1):29--43, 2003.

\bibitem{kuhn2016cycle}
D.~K\"uhn, D.~Osthus, and T.~Townsend.
\newblock Proof of a tournament partition conjecture and an application to
  1-factors with prescribed cycle lengths.
\newblock {\em Combinatorica}, 36(4):451--469, 2016.

\bibitem{lichiardopol2012}
N.~Lichiardopol.
\newblock Vertex-{D}isjoint {S}ubtournaments of {P}rescribed {M}inimum
  {O}utdegree or {M}inimum {S}emidegree: {P}roof for {T}ournaments of a
  {C}onjecture of {S}tiebitz.
\newblock {\em Int. J. Comb.}, pages Art. ID 273416, 9, 2012.

\bibitem{moon1966}
J.~W. Moon.
\newblock On subtournaments of a tournament.
\newblock {\em Canad. Math. Bull.}, 9:297--301, 1966.

\bibitem{pokrovskiy2015highly}
A.~Pokrovskiy.
\newblock Highly linked tournaments.
\newblock {\em J. Combin. Theory Ser. B}, 115:339--347, 2015.

\bibitem{pokrovskiy2016edge}
A.~Pokrovskiy.
\newblock Edge disjoint {H}amiltonian cycles in highly connected tournaments.
\newblock {\em Int. Math. Res. Not. IMRN}, (2):429--467, 2017.

\bibitem{reid1985}
K.~B. Reid.
\newblock Two complementary circuits in two-connected tournaments.
\newblock In {\em Cycles in graphs ({B}urnaby, {B}.{C}., 1982)}, volume 115 of
  {\em North-Holland Math. Stud.}, pages 321--334. North-Holland, Amsterdam,
  1985.

\bibitem{reid1989}
K.~B. Reid.
\newblock Three problems on tournaments.
\newblock In {\em Graph theory and its applications: {E}ast and {W}est
  ({J}inan, 1986)}, volume 576 of {\em Ann. New York Acad. Sci.}, pages
  466--473. New York Acad. Sci., New York, 1989.

\bibitem{song1993}
Z.~M. Song.
\newblock Complementary cycles of all lengths in tournaments.
\newblock {\em J. Combin. Theory Ser. B}, 57(1):18--25, 1993.

\bibitem{stiebitz95}
M.~Stiebitz.
\newblock Decomposition of graphs and digraphs.
\newblock {\em KAM Series in Discrete Mathematics-Combinatorics-Operations
  Research-Optimization}, pages 95--309, 1995.

\bibitem{stiebitz96}
M.~Stiebitz.
\newblock Decomposing graphs under degree constraints.
\newblock {\em J. Graph Theory}, 23(3):321--324, 1996.

\bibitem{thomassen1980}
C.~Thomassen.
\newblock Hamiltonian-connected tournaments.
\newblock {\em J. Combin. Theory Ser. B}, 28(2):142--163, 1980.

\bibitem{thomassen1983}
C.~Thomassen.
\newblock Graph decomposition with constraints on the connectivity and minimum
  degree.
\newblock {\em J. Graph Theory}, 7(2):165--167, 1983.

\bibitem{thomassen1984}
C.~Thomassen.
\newblock Connectivity in tournaments.
\newblock In {\em Graph theory and combinatorics ({C}ambridge, 1983)}, pages
  305--313. Academic Press, London, 1984.

\bibitem{thomassen1991}
C.~Thomassen.
\newblock Highly connected non-{$2$}-linked digraphs.
\newblock {\em Combinatorica}, 11(4):393--395, 1991.

\end{thebibliography}

\end{document}